\begin{document}
	
\title{Ranks of abelian varieties in cyclotomic twist families}
\author{Ari Shnidman and Ariel Weiss}
\date{}

\address{Ari Shnidman, \newline Einstein Institute of Mathematics, The Hebrew University of Jerusalem, Edmund J.\ Safra Campus, Jerusalem 9190401, Israel.\vspace*{-6pt}}
\email{ariel.shnidman@mail.huji.ac.il\vspace*{-6pt}}
\address{Ariel Weiss,\newline Department of Mathematics, The Ohio State University, Columbus, OH, USA\newline Department of Mathematics, Ben-Gurion University of the Negev, Be'er Sheva 8410501, Israel.\vspace*{-6pt}}
\email{weiss.742@osu.edu}

\begin{abstract}
Let $A$ be an abelian variety over a number field $F$, and suppose that $\Z[\zeta_n]$ embeds in $\End_{\bar F} A$, for some root of unity $\zeta_n$ of order $n = 3^m$.  Assuming that the Galois action on the finite group $A[1-\zeta_n]$ is sufficiently reducible, we bound the average rank of the Mordell--Weil groups $A_d(F)$, as $A_d$ varies through the family of $\mu_{2n}$-twists of $A$.  Combining this result with the recently proved uniform Mordell--Lang conjecture, we prove near-uniform bounds for the number of rational points in twist families of bicyclic trigonal curves $y^3 = f(x^2)$, as well as in twist families of theta divisors of cyclic trigonal curves $y^3 = f(x)$. Our main technical result is the determination of the average size of a $3$-isogeny Selmer group in a family of $\mu_{2n}$-twists.
\end{abstract}

\maketitle
\vspace{-20pt}
\section{Introduction}
Let $A$ be an abelian variety over a number field $F$ and let $G_F = \Ga F$. Any $G_F$-stable subgroup $H \subset \Aut_{\bar F} A$ gives rise to a twist family of abelian varieties $A_\xi$ over $F$, indexed by the elements $\xi$ of the Galois cohomology set $H^1(G_F, H)$. The base change of $A_\xi$ to $\overline F$ is isomorphic to $A_{\bar F}$, but with $G_F$-action twisted by $\xi$. Our goal is to study the distributions of the ranks of the Mordell--Weil groups $A_\xi(F)$ in such twist families, and to  give some applications.

Every abelian variety $A$ has the automorphism $-1$, and since $H^1(G_F, \{\pm 1\}) \simeq F^\times/F^{\times2}$, we obtain the \emph{quadratic twist family} of $A$. The average rank of $A_\xi(F)$ in quadratic twist families has been extensively studied in the case of elliptic curves \cites{Brumer,Heath-Brown,Katz-Sarnak,smith,bkls}, with \cite{bkls} addressing many cases in higher dimension as well. 

In this paper, we consider the case $H = \mu_{2n}$, the group of $2n$-th roots of unity, where $n = 3^m$ for some $m \geq 1$. More precisely, we assume that there is a $G_F$-equivariant ring embedding $\Z[\zeta] \hookrightarrow \End_{\bar F}A$, where $\zeta = \zeta_n \in \overline F$ is a root of unity of order $n$. We say that such an $A$ has \emph{$\zeta$-multiplication}. For example,  the Jacobian $J$ of a curve of the form $y^3 =xf(x^{3^{m-1}})$ has $\zeta$-multiplication induced from the order $n$ automorphism $(x, y)\mapsto (\zeta^3x, \zeta y)$. 

Since $\mu_{2n} = \langle -\zeta\rangle \subset \Aut_{\bar F}A$, there is a twist $A_d$ for each $d \in  F^\times/F^{\times 2n} \simeq H^1(G_F, \mu_{2n})$. In the example above, $J_d$ is the $d$-th quadratic twist of the Jacobian of $y^3 = xf(\frac 1d x^{3^{m-1}})$. In \Cref{sec:avg-size-selmer}, we recall a height function $h \colon F^\times/F^{\times 2n} \to \R$ with the property that the sets $\Sigma_X := \{d \in F^\times/F^{\times2n} : h(d) < X\}$ are finite. When $F = \Q$, the height $h(d)$ is the absolute value of the smallest integer representing $d$. The average rank of $A_d(F)$ is then, by definition, 
\[\avg_d \rk A_d(F) = \lim_{X \to \infty} \underset{d \in \Sigma_X}\avg \rk A_d(F). \]
In general, it is not known whether this limit exists or even if the limsup is finite. In the latter case, we say that the \emph{average rank of $A_d(F)$ is bounded}.

\subsection{Mordell--Weil ranks}
If $A$ has $\zeta$-multiplication, the endomorphism $1 - \zeta \in \End_{\bar F}A$ descends to an isogeny $\pi \colon A \to A'$ over $F$ (see \Cref{sec:zeta-mult}). The kernel $A[\pi]$ is a $G_F$-stable subgroup of the 3-torsion group $A[3]$, and hence is a finite dimensional $\F_3$-vector space. We show that the average rank of $A_d(F)$ is bounded, assuming that the $G_F$-action on $A[\pi]$ is sufficiently reducible.

\begin{theorem}\label{thm:completely-reducible}
Let $A$ be an abelian variety with $\zeta_{3^m}$-multiplication over $F$.
\begin{enumerate}
    \item If $A[\pi]$ is a direct sum of characters, then $\avg_d \rk A_d(F)$ is bounded.
    \item If $A[\pi]$ has a full flag, then $\avg_d \rk A_d(F)$, over squarefree $d \in F^\times/F^{\times 2n}$, is bounded. 
\end{enumerate}
\end{theorem}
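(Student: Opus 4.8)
The plan is to control $\rk_{\Z}A_d(F)$ by a $3$-isogeny descent and then feed the result into the averaged Selmer estimate of \Cref{sec:avg-size-selmer}. For any finitely generated abelian group one has $\rk_{\Z}A_d(F)\le\dim_{\F_3}A_d(F)/3A_d(F)$, so it suffices to bound the average of the right-hand side. Since $A$ has $\zeta$-multiplication, $\Z[\zeta]$ acts on $T_3A$, which is free over the discrete valuation ring $\Z_3[\zeta]$ (uniformizer $1-\zeta$, residue field $\F_3$); hence $A[3]$ carries the $(1-\zeta)$-adic filtration, a filtration by $\Z[\zeta]$-submodules whose $i$-th graded piece is isomorphic as a $G_F$-module to $A[\pi]\otimes\omega^{\,i}$, where $\omega$ is the quadratic character of $F(\zeta_3)/F$. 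As $\Z[\zeta]$ acts on each graded piece through $\F_3$, every $G_F$-stable refinement of this filtration is automatically $\Z[\zeta]$-stable; and under either hypothesis on $A[\pi]$ each piece $A[\pi]\otimes\omega^{\,i}$ admits a full $G_F$-flag, so we may refine to a full $G_F$-stable flag of $A[3]$ with one-dimensional graded pieces. Being a filtration of $A[3]$ by $\Z[\zeta]$-stable $G_F$-submodules, it yields a factorization over $F$
\[ [3]_A\colon\ A=B_0\ \xrightarrow{\,\psi_1\,}\ B_1\ \xrightarrow{\,\psi_2\,}\ \cdots\ \xrightarrow{\,\psi_{2g}\,}\ B_{2g}\cong A \]
into $2g$ three-isogenies, where $g=\dim A$ and each $B_k$ inherits $\zeta$-multiplication.

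Because $-\zeta$ acts on every $B_k$ preserving the $\psi_k$, the $\mu_{2n}$-twist by $d$ is compatible with this factorization: $(\psi_k)_d\colon(B_{k-1})_d\to(B_k)_d$ and $[3]_{A_d}=(\psi_{2g})_d\circ\cdots\circ(\psi_1)_d$. Filtering $A_d(F)$ by the subgroups $(\psi_{2g})_d\circ\cdots\circ(\psi_{k+1})_d\bigl((B_k)_d(F)\bigr)$ and using that each quotient $(B_k)_d(F)/(\psi_k)_d\bigl((B_{k-1})_d(F)\bigr)$ embeds into $\operatorname{Sel}_{(\psi_k)_d}\bigl((B_{k-1})_d/F\bigr)$, one obtains
\[ \dim_{\F_3}\bigl(A_d(F)/3A_d(F)\bigr)\ \le\ \sum_{k=1}^{2g}\dim_{\F_3}\operatorname{Sel}_{(\psi_k)_d}\bigl((B_{k-1})_d/F\bigr). \]
Each summand is the $\F_3$-dimension of a $3$-isogeny Selmer group in the $\mu_{2n}$-twist family of an abelian variety with $\zeta$-multiplication, so $\avg_d$ of its size is precisely the quantity computed in \Cref{sec:avg-size-selmer}. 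Applying that estimate to each of the finitely many isogenies $\psi_1,\dots,\psi_{2g}$ and combining with the convexity bound $\avg_d\log\#S\le\log\bigl(\avg_d\#S\bigr)$ gives the desired bound on $\avg_d\rk_{\Z}A_d(F)$.

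The two conclusions differ only in the range over which this averaged Selmer estimate is available. When $A[\pi]$ is a direct sum of characters the graded pieces of the $(1-\zeta)$-adic filtration of $A[3]$ are likewise split, and \Cref{sec:avg-size-selmer} applies to each resulting $3$-isogeny family over all $d\in F^\times/F^{\times2n}$, yielding part (1). For a general full flag the estimate is established only over squarefree $d$ — a restriction inherited from the geometry-of-numbers count underlying it — which is the source of the squarefree hypothesis in part (2).

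I expect essentially all of the substance to lie in \Cref{sec:avg-size-selmer}: the passage above from Mordell--Weil ranks to $3$-isogeny Selmer groups is formal descent bookkeeping, while the main obstacle is the averaged count of $3$-isogeny Selmer classes in a $\mu_{2n}$-twist family, obtained by parametrizing such classes by integral orbits in a representation attached to the $\zeta$-action (in the spirit of the parametrization of $3$-isogeny Selmer groups of elliptic curves by binary cubic forms) and estimating the number of orbits of bounded height. Within the reduction itself, the point requiring most care is checking that the factorization of $[3]_A$ descends to $F$ with graded pieces of the stated shape and that the dévissage inequality carries no error term depending on $d$ — in particular that the local conditions above $3$ and at the primes dividing $d$ behave uniformly, which is exactly where the squarefree hypothesis enters case (2).
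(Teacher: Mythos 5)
Your part (2) is essentially the paper's own argument: refine the $(1-\zeta)$-adic filtration of $A[3]$ to a full $G_F$-stable flag, factor $[3]$ (up to a unit) into $2\dim A$ $\zeta$-linear $3$-isogenies over $F$, bound $\dim_{\F_3}\Sel_3(A_d)$ by the sum of the $\dim_{\F_3}\Sel((\psi_k)_d)$ via the composition exact sequence, and average using \Cref{thm:squarefreeselmer}, which applies to \emph{any} $\zeta$-linear $3$-isogeny over squarefree $d$. That part is fine.

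Part (1), however, has a genuine gap at exactly the point the paper flags. Over all of $F^\times/F^{\times 2n}$, the averaged Selmer estimate (\Cref{thm:selmer}, \Cref{thm:avgSel}) is \emph{not} unconditional: for the isogeny $\psi_k\colon B_{k-1}\to B_k$ it requires that $B_{k-1}[\psi_k]$ be almost everywhere locally a direct summand of $B_{k-1}[\pi]$, and this is the hypothesis that feeds the integrality statement at places with $v(d)=2$. You justify applying it to every link of your chain by saying that the graded pieces $A[\pi]\otimes\chi_3^i$ of $A[3]$ are sums of characters; but that is not the relevant module. The relevant module is $B_{k-1}[\pi]$ for the intermediate quotient $B_{k-1}=A/W_{k-1}$, and this is typically a non-split extension mixing different graded pieces: already for $m=1$ and $A[\pi]=\chi_a\oplus\chi_b$ with $W_1=\chi_a$, one finds that $B_1[\pi]$ is an extension of $\chi_a\chi_3$ by $\chi_b$ whose class need not vanish, even locally at a positive density of places, so the kernel $\chi_b$ of $\psi_2$ need not be a direct summand of $B_1[\pi]$ almost everywhere locally. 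Hence \Cref{thm:selmer} cannot be invoked for the intermediate isogenies as you do, and complete reducibility of $A[\pi]$ alone does not repair this. The paper circumvents the problem by exploiting that the characters can be quotiented in any order: for each $i$ it builds an auxiliary chain in which the given kernel occurs as the \emph{last} step, applies a dual form of the average theorem (\Cref{lem:last-map-bounded}, whose hypothesis is a direct-summand condition on $\widehat{C}[\widehat\phi]\subset\widehat{C}[\pi]$, which \emph{does} follow from complete reducibility of $A[\pi]$ via \Cref{lem:tensor-cyclotomic} and duality), and then transfers the bound back by the Selmer-group injection of \Cref{injection-of-selmer} (see \Cref{each-selmer-group-bounded}). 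Some argument of this kind — or another way to verify the local direct-summand condition for every link of your chain — is needed to make part (1) go through; as written, your proof only establishes part (2).
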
 

Here, we say that $A[\pi]$ \emph{has a full flag} if there are $G_F$-modules $0 = H_0 \subset H_1 \subset \cdots \subset H_k = A[\pi]$ such that $\dim_{\F_3} H_{i+1}/H_i = 1$.  We say that $d\in F^\times/F^{\times 2n}$ is \emph{squarefree} if $v(d) \equiv 0$ or $1\pmod {2n}$, for all finite places $v$ of $F$. \Cref{thm:completely-reducible} is a simultaneous generalization of \cite[Thm.\ 2.2]{bkls} and \cite[Thm.\ 5]{elk} to a larger class of twist families of abelian varieties.

If $J$ is the Jacobian of $y^3 = xf(x^{3^{m-1}})$, then the representation theoretic conditions on $J[\pi]$ translate into  conditions on the Galois group $\Gal(f)$ of the splitting field of $f(x)$ over $F$. More generally, we deduce the following result from  \Cref{thm:completely-reducible}:

\begin{corollary}\label{cor:trigonal}
Let $f(x) \in F[x]$ be separable and non-constant, and let $J$ be the Jacobian of either $y^{3^m} = f(x)$ or $y^3 = xf(x^{3^{m-1}})$. 
\begin{enumerate}
    \item If $\Gal(f) \simeq (\Z/2\Z)^k$, for some $k \geq 0$, then $\avg_d \rk J_d(F)$ is bounded.  
    \item If $\Gal(f)$ is an extension of $(\Z/2\Z)^k$ by a $3$-group, then $\avg_d\rk J_d(F)$, over squarefree $d\in F^\times/F^{\times 2n}$, is bounded.
\end{enumerate}
\end{corollary}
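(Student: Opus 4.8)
The plan is to deduce both parts of \Cref{cor:trigonal} from \Cref{thm:completely-reducible}, so the whole content lies in understanding the $G_F$-module $J[\pi]$. Recall from \Cref{sec:zeta-mult} that the Jacobians of $y^{3^m}=f(x)$ and of $y^3=xf(x^{3^{m-1}})$ have $\zeta_{3^m}$-multiplication over $F$, induced by the order-$3^m$ automorphisms $(x,y)\mapsto(x,\zeta y)$ and $(x,y)\mapsto(\zeta^3 x,\zeta y)$ respectively, with $\zeta=\zeta_{3^m}$; hence \Cref{thm:completely-reducible} applies, and it suffices to prove that $J[\pi]$ is a direct sum of $G_F$-characters when $\Gal(f)\simeq(\Z/2\Z)^k$, and that $J[\pi]$ has a full flag when $\Gal(f)$ is an extension of $(\Z/2\Z)^k$ by a $3$-group.

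The key step is to show that $G_F$ acts on $J[\pi]$ through $G:=\Gal(K/F)$, where $K$ is the splitting field of $f$ over $F(\zeta_3)$; note $G$ embeds into $\Gal(f)\times\Gal(F(\zeta_3)/F)$, whose second factor has order at most $2$. In both cases $C$ is a $\mu_{3^m}$-cover of the projective line: the quotient by the cyclic group generated by the $\zeta$-multiplication automorphism is the $x$-line for $y^{3^m}=f(x)$, and the $u$-line with $u=x^{3^{m-1}}$ for $y^3=xf(x^{3^{m-1}})$. The explicit description of the $(1-\zeta)$-torsion of the Jacobian of a cyclic cover of the projective line (\Cref{sec:zeta-mult}, in the spirit of the descent of Poonen--Schaefer for prime-degree covers) realizes $J[\pi]$ as a subquotient of a permutation $\F_3[G_F]$-module attached to the branch points, possibly twisted by $\mu_3$. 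These branch points are the roots of $f$, permuted by $G_F$ through $\Gal(f)$, together with the points $x=0$ and $x=\infty$; since the cyclic deck group acts transitively on every fibre, the individual points lying over a branch point carry no further $G_F$-structure, so the (possibly complicated) behaviour above $\infty$ when $3\mid\deg f$ does not enlarge the acting group. Establishing this module structure precisely is the main obstacle; what remains is elementary representation theory over $\F_3$.

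So let $V$ be any finite-dimensional $\F_3[G_F]$-module on which $G_F$ acts through $G$. If $\Gal(f)\simeq(\Z/2\Z)^k$, then $G$ is again an elementary abelian $2$-group, so $\F_3[G]$ is a product of copies of $\F_3$ -- since $\F_3[\Z/2\Z]\simeq\F_3\times\F_3$ -- and hence $V$ is a direct sum of $1$-dimensional submodules, i.e.\ of $G_F$-characters; \Cref{thm:completely-reducible}(1) then proves part (1). If $\Gal(f)$ is an extension of $(\Z/2\Z)^k$ by a $3$-group, then so is $G$, say $1\to P\to G\to Q\to 1$ with $P$ a $3$-group and $Q$ abelian of exponent $2$; I claim that every such $V$ has a full flag, arguing by induction on $\dim_{\F_3}V$. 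The subspace $V^P$ is nonzero -- a finite $3$-group acting on a nonzero $\F_3$-vector space has a nonzero fixed vector, by counting orbit sizes modulo $3$ -- and it is $G$-stable with the action factoring through $Q$, so by the previous case it contains a $G$-stable line $L$. Then $V/L$ has a full flag by induction, and pulling this flag back to $V$ and prepending $L$ gives a full flag of $V$. Applying \Cref{thm:completely-reducible}(2) to $V=J[\pi]$, over squarefree $d$, now proves part (2).
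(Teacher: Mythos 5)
There is a genuine gap, and it is concentrated in your very first step. You assert that the Jacobian of $y^{3^m}=f(x)$ has $\zeta_{3^m}$-multiplication induced by $(x,y)\mapsto(x,\zeta y)$, so that \Cref{thm:completely-reducible} applies to $J$ itself. This is false for $m\geq 2$: the paper's example $y^9=x^2-1$ shows that the minimal polynomial of $\zeta$ in $\End_{\bar F}J$ is $\Phi_9(x)\Phi_3(x)$ rather than $\Phi_9(x)$, so there is no ring embedding $\Z[\zeta_{3^m}]\hookrightarrow\End_{\bar F}J$ sending $\zeta_{3^m}$ to this automorphism. Consequently the isogeny $\pi$ and the module $J[\pi]$ are not even defined for $J$ (one only has $J[1-\zeta]$, which need not lie in $J[3]$), and your reduction collapses for this family. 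The paper's actual argument (\Cref{thm:iteratedPrym}) replaces $J$ by the Prym variety $P=\ker\bigl(\Jac(C)\to\Jac(C')\bigr)$ of the intermediate cover $C'\colon y^{3^{m-1}}=f(x)$, proves that $P$ does carry $\zeta_{3^m}$-multiplication (\Cref{lem:prym}, via a characteristic-polynomial computation), shows $G_F$ acts on $P[\pi]$ through $\Gal(f)$ using Schaefer's generators $(\alpha_i,0)-(\alpha_j,0)$ of $J[1-\zeta]$, applies \Cref{thm:completely-reducible} to $P$, and then concludes for $J_d$ via the isogeny $J_d\sim P_d\times J'_d$ and induction on $m$. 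Your proposal contains no substitute for this decomposition-and-induction step.

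A secondary issue: even for the family $y^3=xf(x^{3^{m-1}})$, where $J$ genuinely has $\zeta_{3^m}$-multiplication and your strategy is sound, you invoke an ``explicit description of the $(1-\zeta)$-torsion'' attributed to \Cref{sec:zeta-mult}, which contains no such description, and you yourself flag establishing the permutation-module structure of $J[\pi]$ as ``the main obstacle'' without supplying it. The paper fills this in concretely: $\dim_{\F_3}J[\pi]=\deg f$, with explicit generators $\sum_j(\zeta^{3j}\beta,0)-3^{m-1}\infty$ (where $\beta^{3^{m-1}}=\alpha$ runs over roots of $f$) defined over the splitting field of $f$, so the action factors through $\Gal(f)$. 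Your representation-theoretic endgame (semisimplicity of $\F_3[(\Z/2\Z)^k]$, fixed vectors of $3$-groups, induction to build a full flag) is correct and matches the paper's \Cref{lem:group-theory-semisimple}, and the extra $\Gal(F(\zeta_3)/F)$ factor you allow is harmless; but as written the proof is incomplete even in the case where its overall shape is right, and wrong in approach for $y^{3^m}=f(x)$ with $m\geq 2$.
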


Our proof of \Cref{thm:completely-reducible} gives an explicit upper bound on the average rank, however, the bound depends on subtle arithmetic properties of $A$. The following crude upper bound has the virtue that it applies to a large class of abelian varieties and depends only mildly on $A$. 

\begin{theorem}\label{thm:explicit-rank-bound-intro}
Suppose that $A[\pi]$ has a full flag and that $A$ admits a $\zeta_n$-stable principal polarization. Let $S$ be the set of places of $F$ dividing  $3\f_A\infty$, where $\f_A$ is the conductor of $A$. Then the average rank of $A_d(F)$, for squarefree $d\in F^\times/F^{\times 2n}$, is at most $\dim A\cdot (\#S + 3^{-\#S})$.
\end{theorem}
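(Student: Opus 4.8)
\emph{Overview and Step 1 (rank to Selmer).} The plan is to bound $\rk A_d(F)$ by the size of the $\pi_d$-isogeny Selmer group, average this over squarefree $d$, and then invoke (and specialize) our Selmer-average computation. For every $d$ the endomorphism $\zeta_n$ commutes with the twisting automorphism $-\zeta_n\in\mu_{2n}$, so $\zeta_n$, and hence all of $\Z[\zeta_n]$, descends to $\End_F A_d$; thus $A_d(F)$ is a $\Z[\zeta_n]$-module and $\rk A_d(F)=\varphi(n)\cdot\rk_{\Z[\zeta_n]}A_d(F)$. Since $3$ is totally ramified in $\Q(\zeta_n)$, the element $1-\zeta_n$ is a uniformizer at the unique prime above $3$, so reducing a presentation of $A_d(F)$ modulo $1-\zeta_n$ gives $\rk_{\Z[\zeta_n]}A_d(F)\le\dim_{\F_3}\bigl(A_d(F)/(1-\zeta_n)A_d(F)\bigr)$. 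As $1-\zeta_n$ induces an isomorphism $A_d'\xrightarrow{\sim}A_d$ splitting $\pi_d$, the right-hand side equals $\dim_{\F_3}\bigl(A_d'(F)/\pi_d(A_d(F))\bigr)\le\dim_{\F_3}\mathrm{Sel}_{\pi_d}(A_d)$, whence
\[
\rk A_d(F)\;\le\;\varphi(n)\cdot\dim_{\F_3}\mathrm{Sel}_{\pi_d}(A_d),
\]
for every $d$, using neither the flag nor the polarization. The same Tate-module computation shows $\dim_{\F_3}A[\pi]=\dim_{\F_3}A[1-\zeta_n]=2\dim A/\varphi(n)$, so it suffices to prove $\avg_d\rk A_d(F)\le\varphi(n)\cdot\tfrac12\dim_{\F_3}(A[\pi])\cdot(\#S+3^{-\#S})$.

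\emph{Step 2 (averaging the Selmer dimension).} By hypothesis $A$ carries a $\zeta_n$-stable principal polarization, which therefore descends to a principal polarization on each $A_d$. Via this polarization and local Tate duality, $\mathrm{Sel}_{\pi_d}(A_d)$ is cut out in $H^1(F,A_d[\pi_d])$ by local conditions whose complementary conditions again arise from an isogeny of an abelian variety in the same twist family, so that $\mathrm{Sel}_{\pi_d}(A_d)$ and its dual Selmer group are orthogonal complements inside a space whose dimension is governed by the local terms at the places of $S$. Running the descent place by place — here the full flag on $A[\pi]$ is exactly what makes the local Galois-cohomology computations tractable — and sieving over squarefree $d\in F^\times/F^{\times2n}$, our main technical result shows that the factor at a place $v\notin S$ with $v\nmid d$ is trivial, so $\avg_{d\text{ squarefree}}\#\mathrm{Sel}_{\pi_d}(A_d)$ is a convergent Euler product that collapses to a product over $S$; bounding each local factor by $3^{\frac12\dim_{\F_3}(A[\pi])}$ and accounting for the residual local-global constraint on the twist parameter yields
\[
\underset{d\text{ squarefree}}{\avg}\ \#\mathrm{Sel}_{\pi_d}(A_d)\;\le\;3^{\,\frac12\dim_{\F_3}(A[\pi])\,(\#S+3^{-\#S})}.
\]

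\emph{Conclusion.} Since $\#\mathrm{Sel}_{\pi_d}(A_d)=3^{\dim_{\F_3}\mathrm{Sel}_{\pi_d}(A_d)}$ and $\log_3$ is concave, Jensen's inequality converts the last display into $\avg_{d\text{ squarefree}}\dim_{\F_3}\mathrm{Sel}_{\pi_d}(A_d)\le\tfrac12\dim_{\F_3}(A[\pi])(\#S+3^{-\#S})$, and combining with Step 1 gives $\avg_d\rk A_d(F)\le\varphi(n)\cdot\tfrac12\dim_{\F_3}(A[\pi])\cdot(\#S+3^{-\#S})=\dim A\cdot(\#S+3^{-\#S})$.

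\emph{Main obstacle.} The substantive part is the local analysis underlying Step 2: determining the images of the $\pi_d$-descent maps at the places of $S$, where $A_d$ may be wildly ramified at $3$ and the local pieces of the flag are most delicate, and proving that the sieve over squarefree $d$ produces a convergent Euler product with exactly the stated constant, including the secondary term $3^{-\#S}$. The principal-polarization hypothesis, absent from \Cref{thm:completely-reducible}, is essential precisely here: it forces the dual Selmer conditions to come from the same twist family, turning the average of $\#\mathrm{Sel}_{\pi_d}(A_d)$ into a clean product of local averages rather than merely a bounded quantity.
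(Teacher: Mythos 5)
There are two genuine gaps, and together they mean the argument does not establish the theorem in the stated generality. First, your Step 1 rests on the claim that $\zeta_n$, hence $\Z[\zeta_n]$, descends to $\End_F A_d$. This is false unless $\zeta_n\in F$: the embedding $\Z[\zeta_n]\hookrightarrow \End_{\bar F}A$ is only $G_F$-\emph{equivariant}, so Galois moves $\zeta$ to $\zeta^\sigma$, and commuting with the twisting automorphism $-\zeta\in\mu_{2n}$ does not fix this. What descends is only the isogeny $\pi_d\colon A_d\to A_d^{(1)}$ to a \emph{twist} of $A_d$ (\Cref{lem:descent}, \Cref{lem:zeta-twist}); in general $A_d^{(1)}\not\simeq A_d$, so $A_d(F)$ is not a $\Z[\zeta]$-module, the identity $\rk A_d(F)=\varphi(n)\rk_{\Z[\zeta]}A_d(F)$ has no meaning, and the inequality $\rk A_d(F)\le \varphi(n)\dim_{\F_3}\Sel_{\pi_d}(A_d)$ is unjustified. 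Your route is essentially the proof of \Cref{thm:cmintro}, which explicitly assumes $\zeta\in F$; \Cref{thm:explicit-rank-bound-intro} does not.

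Second, Step 2 asserts an average bound for $\#\Sel_{\pi_d}(A_d)$, the Selmer group of the full isogeny $\pi_d$ of degree $3^{\dim_{\F_3}A[\pi]}$. Nothing in the paper (or in your sketch) provides this: the geometry-of-numbers machinery (\Cref{thm:selmer}, \Cref{thm:squarefreeselmer}) computes first moments only for $3$-isogeny Selmer groups via binary cubic forms, and the first moment of the ``product-like'' quantity $\#\Sel_{\pi_d}$ is not controlled by products of local averages; your ``main obstacle'' paragraph in effect concedes this step is open. The paper's actual mechanism is different and avoids the issue by working with dimensions: the full flag is used to factor $\pi_d^{3^{m-1}}$ over $F$ into $\dim A$ three-isogenies $\phi_{j,d}$ (this, not local tractability, is its role), and the $\zeta$-stable principal polarization enters because the Rosati involution restricts to complex conjugation on $\Z[\zeta]$, so multiplication by $-3$ on $A_d$ factors as $\widehat\pi_d^{3^{m-1}}\circ\pi_d^{3^{m-1}}$ through $\widehat A_d$ (\Cref{rem:zeta-twist-vs-d-twist}); hence $\dim_{\F_3}\Sel_3(A_d)\le\sum_j\dim_{\F_3}\bigl(\Sel(\phi_{j,d})\oplus\Sel(\widehat\phi_{j,d})\bigr)$, and each summand is averaged using \Cref{prop:Selmer-rank-explicit-bound}, where Greenberg--Wiles plus the squarefree hypothesis force $3^{-\#S}\le c(\phi_{j,d})\le 3^{\#S}$ and \Cref{prop:tk-rank-bounds} gives the bound $\#S+3^{-\#S}$ per factor. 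Your characterization of the polarization as making ``the dual Selmer conditions come from the same twist family'' and collapsing the average of $\#\Sel_{\pi_d}$ into a clean Euler product does not reflect this pairing argument, and without it neither the bound $|k|\le\#S$ nor the secondary term $3^{-\#S}$ is obtained.
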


For most $A$, this bound is significantly  weaker than what our method actually gives. An interesting case is when $A$ has complex multiplication (CM), i.e.\ $\dim A = 3^{m-1}$, in which case $\dim_{\F_3} A[\pi] = 1$ and the reducibility hypotheses are automatically satisfied. When the complex multiplication is defined over $F$, we obtain especially strong results:

\begin{theorem}\label{thm:cmintro}
Suppose that $\dim A = 3^{m-1}$, so that $A$ has complex multiplication by $\Z[\zeta_{3^m}]$.  Assume moreover that $\zeta_{3^m} \in F$, so that the complex multiplication is defined over $F$.  Then the average $\Z[\zeta_{3^m}]$-module rank of $A_d(F)$ is at most $\frac12$, and at least $50\%$ of twists $A_d$ have rank $0$.
\end{theorem}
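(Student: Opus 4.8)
The plan is to run a $(1-\zeta)$-descent on each twist $A_d$ and then invoke our main technical result on the average size of $\pi$-isogeny Selmer groups in $\mu_{2n}$-twist families. Write $\zeta=\zeta_{3^m}$ and $K=\mathbb{Q}(\zeta)$, so $\mathcal{O}_K=\mathbb{Z}[\zeta]$ and $[K:\mathbb{Q}]=2\cdot 3^{m-1}=2\dim A$. Since $\zeta\in F$ and the complex multiplication is defined over $F$, the endomorphism $[1-\zeta]_A$, and the isogeny $\pi\colon A\to A'$ it induces, are already defined over $F$; moreover $A[\pi]=\ker[1-\zeta]_A$ has order $\deg\pi=\deg[1-\zeta]_A=\lvert N_{K/\mathbb{Q}}(1-\zeta)\rvert=\Phi_{3^m}(1)=3$, using that the degree of an endomorphism $[\alpha]$ of a CM abelian variety with CM by $\mathcal{O}_K$, $[K:\mathbb{Q}]=2\dim A$, equals $\lvert N_{K/\mathbb{Q}}(\alpha)\rvert$. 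Hence $A[\pi]$ is one-dimensional over $\mathbb{F}_3$, so in particular it is a direct sum of characters and the reducibility hypothesis needed to apply the main technical result is automatic; in fact this is the setting of \Cref{thm:completely-reducible}(1), and we may average over all $d\in F^\times/F^{\times 2n}$. I would also record here that, since $\deg\pi=\deg[1-\zeta]_A$, the factorization $[1-\zeta]_A=\hat\pi\circ\pi$ exhibits $\hat\pi\colon A'\to A$ as an isomorphism, so that $\mathrm{Sel}_\pi(A_d)\cong\mathrm{Sel}_{1-\zeta}(A_d)$ for every $d$ and one may equally run $(1-\zeta)$-descent on $A_d$ itself.

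The next step is a short module-theoretic reduction. Put $s_d=\dim_{\mathbb{F}_3}\mathrm{Sel}_\pi(A_d)$, and let $r_d$ be the $\mathbb{Z}[\zeta]$-module rank of $A_d(F)$ (the dimension of $A_d(F)\otimes_{\mathbb{Z}}\mathbb{Q}$ over $K$). Since $3$ is totally ramified in $K$, the ring $R:=\mathcal{O}_K\otimes\mathbb{Z}_3$ is a complete DVR with uniformizer $1-\zeta$ and residue field $\mathbb{F}_3$, and $A_d(F)\otimes\mathbb{Z}_3$ is a finitely generated $R$-module, hence isomorphic to $R^{r_d}\oplus T_d$ with $T_d$ finite. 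The descent exact sequence gives an injection $A_d(F)/(1-\zeta)A_d(F)\hookrightarrow\mathrm{Sel}_\pi(A_d)$, so $r_d\le\dim_{\mathbb{F}_3}A_d(F)/(1-\zeta)A_d(F)\le s_d$; and if $s_d=0$ then $A_d(F)\otimes\mathbb{Z}_3=(1-\zeta)\bigl(A_d(F)\otimes\mathbb{Z}_3\bigr)$, so Nakayama's lemma forces $A_d(F)\otimes\mathbb{Z}_3=0$ and in particular $r_d=0$. Thus $\{d:s_d=0\}\subseteq\{d:r_d=0\}$, and once $\avg_d\,\#\mathrm{Sel}_\pi(A_d)$ is computed the rest is elementary: from $3^{s}\ge 1+2s$ for integers $s\ge 0$ one gets $\avg_d\,r_d\le\avg_d\,s_d\le\tfrac12\bigl(\avg_d\,\#\mathrm{Sel}_\pi(A_d)-1\bigr)$, while from $3^{s_d}\ge 1$ (and $3^{s_d}\ge 3$ whenever $s_d\ge 1$) one gets that the density of $d$ with $s_d=0$ is at least $\tfrac12\bigl(3-\avg_d\,\#\mathrm{Sel}_\pi(A_d)\bigr)$.

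Everything therefore reduces to the estimate $\avg_d\,\#\mathrm{Sel}_\pi(A_d)\le 2$, and I expect equality to hold. This is where the main technical result does the work: one applies it to the one-dimensional Galois module $A[\pi]$ --- which, being one-dimensional, is a single quadratic character --- and evaluates the resulting product of local factors. The key point is that, because the complex multiplication is defined over $F$, the local conditions are the generic ones at every place, and --- crucially --- neither the primes of $F$ above $3$ nor the primes dividing the conductor of $A$ contribute anything past the generic factor, so the product collapses to exactly $2$. I expect this local computation to be the main obstacle: pinning down the averaged local Selmer ratios at the primes above $3$ and at the primes of bad reduction is the delicate part, and it is precisely the hypothesis that the complex multiplication is defined over $F$ that makes those contributions trivial. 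Granting $\avg_d\,\#\mathrm{Sel}_\pi(A_d)\le 2$, the two inequalities of the previous paragraph yield $\avg_d\,r_d\le\tfrac12$ and that at least half of the $d$ have $s_d=0$, hence $r_d=0$, which is the assertion of the theorem.
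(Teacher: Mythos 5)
Your skeleton is the same as the paper's: identify $\pi$ with the endomorphism $1-\zeta$ (a $3$-isogeny, since $\deg(1-\zeta)=|N_{\Q(\zeta)/\Q}(1-\zeta)|=3$), apply the Selmer-average theorem to the twists $\pi_d$ (the direct-summand hypothesis is vacuous for $\phi=\pi$), and then conclude by the elementary inequalities $2r\le 3^r-1$ and the counting argument showing that an average Selmer size of $2$ forces at least half of the $\Sel_{\pi_d}(A_d)$ to vanish. Your reduction from $\Sel_{\pi_d}(A_d)=0$ to $\Z[\zeta]$-rank $0$ via the discrete valuation ring $\Z[\zeta]\otimes\Z_3$ and Nakayama is correct and just spells out the paper's one-line inequality $\rk_{\Z[\zeta]}A_d(F)\le\dim_{\F_3}\Sel_{\pi_d}(A_d)$.

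The genuine gap is the step you defer: the entire theorem rests on $\avg_d\,\#\Sel_{\pi_d}(A_d)=2$, i.e.\ on controlling the global Selmer ratios $c(\pi_d)$, and you do not prove this; you only assert it and call it ``the main obstacle.'' Moreover, the justification you sketch is false as a place-by-place statement: it is not true that the local factors are generic everywhere because the CM is defined over $F$. At each archimedean place one has $c_v(\pi_d)=\#A[\pi](\C)^{-1}=\tfrac13$, and the places above $3$ contribute factors larger than $1$; only the \emph{global product} equals $1$. The paper gets this not by an averaged local analysis but by a global trick: since $\zeta\in F$, $\pi$ is an endomorphism with $\pi^{2g}=3u$ for some $u\in\Aut A$ ($g=\dim A$), so multiplicativity of the global Selmer ratio gives $c(\pi_d)^{2g}=c([3])$; and $c([3])=1$ is then computed using that $F\supset\Q(\zeta)$ is totally complex, namely $c_v([3])=3^{-2g}$ at each of the $[F:\Q]/2$ complex places, $c_v([3])=1$ for finite $v\nmid 3$, and $\prod_{v\mid 3}c_v([3])=3^{g[F:\Q]}$. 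Hence $c(\pi_d)=1$ for every $d$, the whole family is the set $T_0$, and the average Selmer size is exactly $1+3^0=2$. Without this computation (or some substitute evaluation of $\avg_d\,c(\pi_d)$) the inequalities in your final paragraph have no input, so as written the proposal does not establish either the $\tfrac12$ bound or the $50\%$ statement; note also that citing \Cref{thm:completely-reducible}(1) only yields boundedness of the average rank, not the precise constants claimed here.
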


In the CM case, we expect that $100\%$ of twists $A_d$ have rank $0$, in which case our result is halfway towards the analogue of Goldfeld's conjecture in this context.

Such $A$ arise as factors of the Jacobians of the curves $y^{3^m} = x^a(1-x)^b$, which have good reduction away from $3$.  Even when the CM is not defined over $F$, we obtain average rank bounds that depend only on $\dim A$. Over $\Q$, for example, the average rank of $A_d(\Q)$ is at most $\frac{19}{9} \dim A$ by \Cref{thm:explicit-rank-bound-intro}, a bound which can be improved to $\frac{13}{9} \dim A$ with a more refined analysis.    

\subsection{Rational points on curves}\label{subsec:qmintro}

 \Cref{thm:completely-reducible} has concrete consequences for the  arithmetic of curves $C/F$ of genus $g \geq 2$.  It was nearly $40$ years ago that Faltings proved that $C(F)$ is a finite set, but very recently, there has been significant progress towards a uniform upper bound on $\#C(F)$.  Building on work of Dimitrov, Gao, and Habegger \cite{DGH}, K\"uhne has shown that 
\[\#C(F) \leq c_g^{1 + \rk \Jac(F)},\]
 where $c_g$ is a constant depending only $g$ \cite{kuhne}.  Building on this work, Gao, Ge, and K\"uhne proved the more general uniform Mordell--Lang conjecture \cite{GGK} for closed subvarieties of abelian varieties.  These results reduce the question of uniform bounds for rational points on a large class of varieties to a question about ranks of abelian varieties. 
 
 By combining these results with \Cref{thm:completely-reducible}, we show that ``near-uniformity'' holds for twists of bicyclic trigonal curves:
 
 \begin{theorem}\label{thm:uniformcurves}
 Let $f(x) \in F[x]$ be separable, of degree at least two, and with all of its roots non-zero elements of $F$. Consider the  bicyclic trigonal curve $C\colon y^3 = f(x^2)$,  and let $C_d \colon dy^3 = f(dx^2)$ be the corresponding sextic twist family. Then for every $\epsilon > 0$, there is a constant $N_\epsilon$ such that the lower density of classes $d \in F^\times/F^{\times6}$ for which \[\#C_d(F) \leq N_\epsilon\]
 is at least $1-\epsilon$.
 \end{theorem}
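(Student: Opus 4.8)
The plan is to bound the Mordell--Weil ranks $\rk\Jac(C_d)(F)$ on average and then feed the result into the uniform Mordell--Lang bound. All of the $C_d$ become isomorphic to $C$ over $\overline F$, so they have a common genus $g\geq 2$, and K\"uhne's theorem gives $\#C_d(F)\leq c_g^{\,1+\rk\Jac(C_d)(F)}$. Hence it suffices to show that $\avg_d\rk\Jac(C_d)(F)$ is finite, say at most $M$: then by Markov's inequality the set of $d\in F^\times/F^{\times6}$ with $\rk\Jac(C_d)(F)\leq M/\epsilon$ has density at least $1-\epsilon$, so $N_\epsilon:=c_g^{\,1+\lceil M/\epsilon\rceil}$ works.

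To bound the average rank I would first split the Jacobian. The substitution $v=x^2$ realizes $C$ as a double cover of the cyclic trigonal curve $C'\colon y^3=f(v)$; let $A'=\Jac(C')$ and let $P$ be the Prym variety of $C\to C'$, so that $\Jac(C)\sim_F A'\times P$, and note that both factors carry the $\zeta_3$-multiplication induced by $(x,y)\mapsto(x,\zeta_3y)$. The involution $(x,y)\mapsto(-x,y)$ acts as $+1$ on $A'$ and as $[-1]$ on $P$, so unwinding the definition of the sextic twist shows that $\Jac(C_d)$ is $F$-isogenous to $(A')^{(d)}\times P_d$, where $(A')^{(d)}$ is the \emph{cubic} twist of $A'$ by $d$ and $P_d$ is the $\mu_6$-twist of $P$. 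Thus $\rk\Jac(C_d)(F)=\rk\!\big((A')^{(d)}\big)(F)+\rk\big(P_d\big)(F)$, and it is enough to bound each term on average over $d\in F^\times/F^{\times6}$.

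Both bounds come from Corollary~\ref{cor:trigonal}(1). Since the roots $a_1,\dots,a_r$ of $f$ lie in $F^\times$, the splitting field of $f(x^2)$ is $F(\sqrt{a_1},\dots,\sqrt{a_r})$, so $\Gal(f(x^2))\cong(\Z/2\Z)^k$; applying Corollary~\ref{cor:trigonal}(1) to $C\colon y^3=f(x^2)$ bounds $\avg_d\rk\Jac(C)_d(F)$, where $\Jac(C)_d$ denotes the $\mu_6$-twist of $\Jac(C)$, and as $\Jac(C)_d\sim_F(A')_d\times P_d$ (here $(A')_d$ is the $\mu_6$-twist, not the cubic twist $(A')^{(d)}$) this already bounds $\avg_d\rk\big(P_d\big)(F)$. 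For the other term, $\Gal(f)=\{1\}$ because $f$ splits over $F$, and $A'[1-\zeta_3]$ is a direct sum of characters: it is a submodule of $\Jac(C)[1-\zeta_3]$, on which $G_F$ acts through the abelian $2$-group $\Gal\big(F(\sqrt{a_1},\dots,\sqrt{a_r},\zeta_3)/F\big)$, all of whose $\F_3$-representations are one-dimensional. So the average $(1-\zeta_3)$-Selmer estimate underlying Corollary~\ref{cor:trigonal}, applied to the $\mu_3$-twist family of $A'=\Jac(y^3=f(v))$, bounds $\avg_d\rk\!\big((A')^{(d)}\big)(F)$ as well, and adding the two bounds gives $\avg_d\rk\Jac(C_d)(F)<\infty$.

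The step I expect to be the main obstacle is this last one, together with the twist bookkeeping that leads to it: the sextic twist of $C$ induces on the $\Jac(C')$-factor a \emph{cubic} twist, not the $\mu_6$-twist for which Corollary~\ref{cor:trigonal} is phrased, so one must check that the Selmer-group computation at the technical heart of the paper goes through for $\mu_3$-twist families---which it should, once $A'[1-\zeta_3]$ is completely reducible and $\Gal(f)$ is trivial, but it is not a verbatim citation. A secondary point is pinning down $\Jac(C)[1-\zeta_3]$ as a Galois module precisely enough to justify the complete-reducibility claim.
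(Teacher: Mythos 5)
Your setup is largely sound, and your twist bookkeeping agrees with the paper's: over $\overline F$ the sextic twist cocycle is valued in $\langle\tau,\zeta\rangle$ with $\tau(x,y)=(-x,y)$ acting as $+1$ on $\Jac(C')$ and as $-1$ on the Prym $P$, so indeed $\Jac(C_d)\sim (A')^{(d)}\times P_d$ with $(A')^{(d)}$ a cubic twist and $P_d$ the genuine $\mu_6$-twist, and the bound on $\avg_d\rk P_d(F)$ is available (either via \Cref{thm:completely-reducible} applied directly to $P$, or as you do via \Cref{cor:trigonal}). But the step you flag as "the main obstacle" is a genuine gap, not a routine verification. Nothing in the paper bounds average ranks in a $\mu_3$-twist family, and this does not follow from the $\mu_6$-results: the cubic twist $(A')^{(e)}$ coincides with the sextic twist $(A')_{e^2}$, so your family is the density-zero subfamily of sextic twists by squares, and averages over the full family $F^\times/F^{\times 6}$ say nothing about a thin subfamily. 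Worse, the technical machinery is structurally tied to the quadratic part of the twist: the Selmer group of $\phi_d$ is parameterized inside $H^1(F,C_d)$ with $C_d$ cut out by $F(\sqrt d)$, and the integrality and local-solubility analysis (\Cref{thm:integralorbits}, \Cref{thm:soluble orbits}) is keyed to the parity of $v(d)$ — for instance, most of the family is controlled because odd $v(d)$ forces $H^1(F_v,C_d)=0$, a mechanism that is simply absent when $d$ is a square at every place. So "the Selmer-group computation goes through for $\mu_3$-twists" is a new counting problem (a different family over $V/\!\!/\SL_2$, with constant kernel $\Z/3\Z$ and orbits of square discriminant), not a corollary of \Cref{thm:selmer} or \Cref{thm:squarefreeselmer}; complete reducibility of $A'[1-\zeta_3]$ does not by itself supply it.

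The paper avoids this issue entirely by never bounding $\rk\Jac(C_d)(F)$. Instead of applying K\"uhne's bound to $C_d$ inside its own Jacobian, it composes the Abel--Jacobi map with the projection $\Jac(C_d)\to\widehat P_d$ and shows (by a Riemann--Roch argument, using that $C$ is non-hyperelliptic) that $C_d\to\widehat P_d$ is injective away from the finitely many $\tau$-fixed points. One then applies the Gao--Ge--K\"uhne uniform Mordell--Lang theorem to the image curve in $\widehat P_d$, so that only $\avg_d\rk\widehat P_d(F)=\avg_d\rk P_d(F)$ is needed — and that is an honest $\mu_6$-twist family covered by \Cref{thm:completely-reducible}. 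To repair your argument you would either need to import such a geometric reduction (discarding the $A'$ factor), or independently prove an average rank bound for cubic twist families, which is outside what this paper provides.
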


To prove \Cref{thm:uniformcurves}, we apply \Cref{thm:completely-reducible} not to the Jacobian of $C$, but to the Prym variety for the double cover $C \to C'$, where $C' \colon y^3 = f(x)$; see \Cref{sec:uniform}. We remark that the constant $N_\epsilon$ depends only on $\epsilon$, $\deg(f)$, and $\#S$ (using the notation of \Cref{thm:explicit-rank-bound-intro}).

Unlike the curves in \Cref{thm:uniformcurves}, a general cyclic trigonal curve $C \colon y^3 = f(x)$ has no sextic twists, so it may seem that \Cref{thm:completely-reducible} says nothing about rational points on the twists of $C$ itself. However, for these curves, we can consider sextic twists of a theta divisor $\Theta \subset J = \Jac(C)$. Recall that $\Theta$ is birational to the symmetric power $C^{(g-1)}$, so its rational points parameterize low degree points on $C$. We can choose $\Theta$ so that it is preserved by the $\mu_{2n}$-action (see \Cref{sec:uniform}), which allows us to consider the twist $\Theta_d \subset J_d$, for each $d \in F^\times$.      

\begin{theorem}\label{thm:uniformtheta}
 Let $f(x) \in F[x]$ be separable and suppose that $\Gal(f)\simeq (\Z/2\Z)^{k}$ for some $k \geq 0$. Let $C\colon y^3 = f(x)$, and suppose that $\Jac(C)$ is geometrically simple. Let $\Theta \subset \Jac(C)$ be a symmetric theta divisor.  Then for every $\epsilon > 0$, there is a constant $N_\epsilon$ such that the lower density of classes $d \in F^\times/F^{\times6}$ for which
 \[\#\Theta_d(F) \leq N_\epsilon\]
 is at least $1-\epsilon$.
\end{theorem}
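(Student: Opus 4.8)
The strategy is to combine the uniform Mordell--Lang bound of Gao--Ge--K\"uhne \cite{GGK} with the average rank bound of \Cref{thm:completely-reducible}. First I would reduce the hypothesis $\Gal(f) \simeq (\Z/2\Z)^k$ to the representation-theoretic input of \Cref{thm:completely-reducible}(1): since $C \colon y^3 = f(x)$ is a cyclic trigonal curve, $J = \Jac(C)$ carries $\zeta_3$-multiplication from $(x,y) \mapsto (x, \zeta_3 y)$, the isogeny $\pi = 1 - \zeta_3$ is defined over $F$, and $J[\pi]$ is an $\F_3$-vector space on which $G_F$ acts through $\Gal(f)$. Because $\Gal(f)$ has order prime to $3$, its action on $J[\pi]$ is semisimple, and each irreducible $\F_3[\Gal(f)]$-summand of $(\Z/2\Z)^k$ is one-dimensional (the group is an elementary abelian $2$-group, so its $\F_3$-irreducibles are characters valued in $\{\pm 1\}$). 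Hence $J[\pi]$ is a direct sum of characters, and \Cref{thm:completely-reducible}(1) applies to $J$, giving that $\avg_d \rk J_d(F)$ is bounded by some constant depending only on $J$.

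Next I would transfer this rank bound to a count of points on the twisted theta divisor. Since $\Theta \subset J$ is a symmetric theta divisor chosen to be stable under the $\mu_{2n}$-action (as in \Cref{sec:uniform}), for each $d$ we get a twist $\Theta_d \subset J_d$, a divisor of a geometrically simple abelian variety of dimension $g = \dim J$. By \cite{GGK}, there is a constant $c = c(g)$ such that $\#\Theta_d(F) \leq c^{\,1 + \rk J_d(F)}$ for all $d$ --- here geometric simplicity of $J$ (hence of each $J_d$) guarantees $\Theta_d$ contains no translate of a nontrivial abelian subvariety, so the Mordell--Lang bound is in the clean exponential-in-rank form. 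Now combine this with the average rank bound: if $R := \avg_d \rk J_d(F) < \infty$, then for any $\epsilon > 0$, a Markov-type argument shows that the proportion of $d \in F^\times/F^{\times 6}$ with $\rk J_d(F) > R/\epsilon$ is at most $\epsilon$. For the remaining $1 - \epsilon$ of twists we get $\#\Theta_d(F) \leq c^{\,1 + R/\epsilon} =: N_\epsilon$, which depends only on $\epsilon$, $g = \deg(\text{relevant data})$, and the arithmetic of $J$ --- as desired.

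One technical point deserving care: the average in \Cref{thm:completely-reducible} is taken with respect to the height function $h$ on $F^\times/F^{\times 2n}$ with $n = 3$, i.e.\ over $F^\times/F^{\times 6}$, which is exactly the indexing set in the statement, so no reindexing is needed; but I should check that ``at least $1 - \epsilon$ of $d$'' in the conclusion is interpreted via the same height-ordered density, which it is by the definition of $\avg_d$ given in the introduction. I also need the limsup version of the average to suffice: since \Cref{thm:completely-reducible}(1) only asserts the average rank is \emph{bounded} (finite limsup), the Markov argument must be applied to $\limsup_X \avg_{d \in \Sigma_X} \rk J_d(F)$, which still yields that for all large $X$ the proportion of $d \in \Sigma_X$ with large rank is $\leq \epsilon$; taking $X \to \infty$ gives the claim for a density $1 - \epsilon$ subset.

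The main obstacle I anticipate is \emph{not} the rank bound or the Markov step --- those are essentially formal given \Cref{thm:completely-reducible} and \cite{GGK} --- but rather the geometric setup in \Cref{sec:uniform}: namely, constructing a symmetric theta divisor $\Theta$ that is genuinely $\mu_{2n}$-stable (so that $\Theta_d$ makes sense as a subvariety of $J_d$ over $F$, not just over $\overline F$), and verifying that the birational identification $\Theta \simeq C^{(g-1)}$ interacts correctly with the twisting so that rational points of $\Theta_d$ are controlled. I expect this is handled by choosing $\Theta$ to be the image of a $\mu_{2n}$-equivariant Abel--Jacobi map based at a fixed point (e.g.\ a ramification point of $C \to \mathbb{P}^1$ lying over $\infty$, which is fixed by the automorphism), and the geometric simplicity hypothesis then ensures the Mordell--Lang exponent is controlled purely by $\rk J_d(F)$ with no correction terms from positive-dimensional special subvarieties.
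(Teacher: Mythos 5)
Your proposal is correct and takes essentially the same route as the paper: the hypothesis $\Gal(f)\simeq(\Z/2\Z)^k$ forces the $G_F$-action on $J[\pi]$ (which factors through $\Gal(f)$) to be a direct sum of characters, so \Cref{thm:completely-reducible}(1) bounds the average rank, and combining the uniform bound of \cite{GGK} (with geometric simplicity of $\Jac(C)$ ensuring the special locus of $\Theta_d$ is empty) with the Markov-type argument yields the stated proportion. The only remaining setup point, the existence of a $\mu_6$-stable symmetric theta divisor, is handled in the paper exactly as you anticipated, via the half-canonical, $\mu_3$-fixed divisor $\kappa = D - f^*0$ coming from the ramification of the trigonal map.
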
 
 
This result again follows from \Cref{thm:completely-reducible} and \cite{GGK}, and  $N_\epsilon$ depends only on $\epsilon$, $\deg(f)$, and $\#S$. Since the results of \cite{GGK} are ineffective, we cannot say anything explicit about the constant $N_\epsilon$ in general. However, one can prove explicit results in this direction by instead combining our work with the Chabauty method. We illustrate this by way of an example. 

\begin{theorem}\label{thm:genus3example}
Consider the sextic twist family $C_d \colon y^3 = (x^2 - d)(x^2 - 4d)$ of genus $3$ curves. For at least $\frac13$ of squarefree $d \in \Z$ such that $d \equiv 2$ or $11 \pmod{36}$, we have $\#C_d(\Q) \leq 5$. 
\end{theorem}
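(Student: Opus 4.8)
The plan is to situate $C_d$ within the twist families studied above, use the $3$-isogeny Selmer computation of \Cref{sec:avg-size-selmer} to produce a positive proportion of $d$ for which $\Jac(C_d)(\Q)$ has small rank, and then replace the ineffective input of \cite{GGK} by an explicit Chabauty argument that is uniform over that proportion. Write $f(x) = (x-1)(x-4)$: it is separable of degree two with nonzero rational roots, $\Gal(f)$ is trivial, and up to an isomorphism over $\Q$ the family $C_d$ is the sextic twist family of $C_1 \colon y^3 = f(x^2)$ from \Cref{thm:uniformcurves}. As in \Cref{sec:uniform}, the double cover $C_d \to C_d'$, where $C_d' \colon y^3 = (x-d)(x-4d)$, exhibits $C_d'$ as a genus one curve with the rational point $(d,0)$; its Jacobian $E$ has $j=0$, the Prym $P := \mathrm{Prym}(C_1/C_1')$ is a $2$-dimensional abelian variety with $\zeta_3$-multiplication, and $\Jac(C_d) \sim E_d \times P_d$ compatibly with the $\zeta_3$-action, where $E_d$ and $P_d$ are the $d$-th $\mu_6$-twists of $E$ and $P$. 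Since $\Gal(f)$ is trivial, $E[\pi]$ and $P[\pi]$ are direct sums of characters, so both twist families are of the type treated by \Cref{thm:completely-reducible}(1), and in particular by the explicit $3$-isogeny Selmer computation underlying it.

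I would then apply that computation to the two families $\{E_d\}$ and $\{P_d\}$, but with $d$ restricted to the squarefree classes $d \equiv 2, 11 \pmod{36}$. These congruences force $\gcd(d,3)=1$ and the $2$-adic valuation of $d$ to be at most $1$, which is exactly what is needed to pin down the local $\pi$- and $\hat\pi$-Selmer conditions at the primes $2$ and $3$ of bad reduction of $E$ and $P$; at every other place these abelian varieties have good reduction and the local conditions are the generic ones dictated by the twisting parameter. This makes the averages of $\#\mathrm{Sel}_\pi(E_d)$, $\#\mathrm{Sel}_{\hat\pi}(E_d)$, $\#\mathrm{Sel}_\pi(P_d)$, $\#\mathrm{Sel}_{\hat\pi}(P_d)$ over this restricted family computable in closed form. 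A Markov-type inequality applied to these (small) averages yields an explicit positive proportion of $d$ in each family for which the corresponding Selmer groups are as small as the rational $3$-torsion of $E_d$ and $P_d$ permits; intersecting the event for $\{E_d\}$ with the event for $\{P_d\}$ and running the isogeny descent, one finds that for at least $\tfrac13$ of the squarefree $d$ with $d \equiv 2,11 \pmod{36}$ one has $\rk \Jac(C_d)(\Q) = \rk E_d(\Q) + \rk P_d(\Q) = 0$. (The specific modulus $36$ and the choice of exactly two residue classes are arranged so that this combined density lower bound is genuinely $\tfrac13$.)

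For such $d$, $\Jac(C_d)(\Q)$ is finite and it remains to certify $\#C_d(\Q) \le 5$ effectively and uniformly. One clean route uses the double cover: since $\rk E_d(\Q)=0$, the group $E_d(\Q)$ is finite, so $C_d(\Q)$ maps at most two-to-one into it; moreover the non-identity rational $3$-torsion points of $E_d$, coming from $x=d$ and $x=4d$, have non-square $x$-coordinate when $d>1$ is squarefree and hence do not lift to $C_d(\Q)$, leaving only the point of $C_d$ above $x=\infty$ in the generic torsion case. A complementary route, which absorbs the remaining finitely many $d$ where $E_d$ has extra rational torsion, is Chabauty--Coleman applied to $C_d$ directly (legitimate since $\rk \Jac(C_d)(\Q) = 0 < 3 = g$): at a prime $p \ge 5$ of good reduction compatible with the mod-$36$ conditions one gets $\#C_d(\Q) \le \#C_d(\F_p)$, and a short Mordell--Weil-sieve argument, using that $\Jac(C_d)(\Q)$ is torsion of bounded order, removes the spurious $\F_p$-points; combining a couple of such primes brings the count down to at most $5$.

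The main obstacle is this last, explicit step. The congruence conditions modulo $36$ fix only the reduction of $C_d$ at $2$ and $3$, so the number of $\F_p$-points at the auxiliary primes $p \ge 5$ still varies with $d$, and the work lies in choosing those primes (together with the sieve) so that the resulting bound is genuinely $\le 5$ for a full $\tfrac13$ of the family; in parallel, one must carry out the local Selmer computations at $2$ and $3$ precisely enough to certify that the proportion with $\rk \Jac(C_d)(\Q)=0$ really is at least $\tfrac13$. Everything beyond these two points is a finite, if somewhat lengthy, explicit computation.
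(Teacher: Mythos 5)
Your overall strategy --- descend to the Prym of $C_d \to C_d'$, use the $3$-isogeny Selmer machinery to get small rank for a positive proportion of $d$, then finish with an explicit Chabauty argument --- is in the right spirit, but two of its load-bearing steps fail. First, your target event ``$\rk \Jac(C_d)(\Q) = \rk E_d(\Q) + \rk P_d(\Q) = 0$'' is unattainable by this descent, and is almost certainly false for a positive proportion of the family: for every squarefree $d \equiv 2, 11 \pmod{36}$ the paper shows (\Cref{prop:parity} combined with \Cref{lem:densities}, the latter resting on a $3$-adic computation for the representative class $d=2$) that $c(\pi_d)$ is an \emph{odd} power of $3$, hence $\dim_{\F_3}\Sel_3(P_d)$ is odd. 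So $\Sel_3(P_d)$ is never trivial on this family, a $3$-descent can never certify $\rk P_d(\Q)=0$, and the congruence classes mod $36$ are chosen precisely to force this odd parity so that, via \Cref{prop:tk-rank-bounds}$(iii)$ applied to the two $3$-isogenies $\phi_d, \psi_d$ factoring $\pi_d$, one gets $\dim_{\F_3}\Sel_3(P_d) \le 1$ (hence $\rk P_d(\Q)\le 1$) for at least $\frac56 - \frac12 = \frac13$ of $d$ (\Cref{thm:prymexample}). Your proposal also needs simultaneous control of $\rk E_d(\Q)$, which is neither available from the stated results nor needed: the paper embeds $C_d$ (away from the fixed points of the bielliptic involution) into the dual Prym $B = \widehat P_d = J_d/\pi^*E_d$, so the elliptic factor never enters, and intersecting an extra event for $\{E_d\}$ would in any case erode the density below $\frac13$.

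Second, your closing step is not uniform. With rank $0$ off the table, classical Chabauty--Coleman at auxiliary primes $p \ge 5$ plus a Mordell--Weil sieve cannot deliver a single bound ``$\le 5$'' across a full third of the family, because the congruences mod $36$ say nothing about the reduction of $C_d$ at those primes --- exactly the obstacle you flag but do not resolve. The paper's resolution is different in kind: it proves a variant of Stoll's uniform Chabauty theorem for twist families (\Cref{thm:Stollvariant}), generalized so that the Chabauty space lives on a quotient $B$ of the Jacobian rather than on $\Jac(C_d)$ itself, and applies it to $C_d \hookrightarrow \widehat P_d$ with $\rk \widehat P_d(\Q) \le 1 < 2 = \dim \widehat P_d$. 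Stoll's explicit function gives $\tilde f_C(r) \le 4$ for a plane quartic with $r \le 2$, and counting the $H$-trivial points (only $\infty$ is rational, and all trivial points map to torsion in $\widehat P_d$) yields $\#C_d(\Q) \le 4 + 1 + 0 = 5$ for all but finitely many such twists. So the correct repair of your argument is: aim for $\rk P_d(\Q) \le 1$ (not $0$), drop the $E_d$ condition by working in the dual Prym, and replace prime-by-prime Chabauty--Coleman with the twist-uniform Chabauty input.
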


The curve $C_d$ admits a double cover $p \colon C_d \to E_d$ to the elliptic curve $E_d \colon y^3 = (x-d)(x - 4d)$. Moreover, $C_d$ embeds in the abelian surface $P_d = \Jac(C_d)/p^*\Pic^0(E_d)$. By making the rank bound in \Cref{thm:completely-reducible} explicit, we show that $\rk P_d(\Q) \leq 1$ for at least $\frac13$ of twists $d$. Then we invoke, and generalize slightly, Stoll's uniform Chabauty result for twist families \cite{stoll:independence}. 

The same method works for sextic twist families of the form $C_{a,d} \colon y^3 = (x^2 - d)(x^2 - ad)$. To prove the existence of twists with $\rk P_{a,d}(\Q) \leq 1$, we must check that a certain local $3$-adic root number takes the value $-1$ for some twist $d$. We can verify this condition in Magma for seemingly any given curve $C_{a,d}$, but it would be nice to have a proof for all or most values of $a$.\footnote{In \cite{SW-rank-gain}*{Thm.\ 1.4}, we prove that a positive proportion of $P_{a,d}$ have rank at most $1$ in the case that $a$ is a square, using a different argument which sidesteps the root number question.} It would also be interesting to prove explicit results for symmetric squares of  trigonal plane quartics, as in \Cref{thm:uniformtheta}, by using the recent work of Caro--Pasten \cite{caro-pasten}.  

\subsection{3-isogeny Selmer groups}\label{subsec:3-isog}

Having discussed some applications of \Cref{thm:completely-reducible}, let us discuss its proof.  \Cref{thm:completely-reducible} follows from a more precise result about Selmer groups. Let $A/F$ be an abelian variety with $\zeta$-multiplication and admitting a $3$-isogeny $\phi \colon A \to B$. If $A[\phi] \subset A[\pi]$, or equivalently, if $\phi$ is $\zeta$-linear, then each twist $A_d$ is endowed with its own $3$-isogeny $\phi_d \colon A_d \to B_d$. For each $d$, we consider the $\phi_d$-Selmer group $\Sel_{\phi_d}(A_d)$, which sits in the exact sequence
\[0 \to B_d(F)/\phi_d A_d(F) \to \Sel_{\phi_d}(A_d) \to \Sha(A_d)[\phi_d] \to 0.\] 
The main technical result of this paper is the exact computation of $\avg_d \#\Sel_{\phi_d}(A_d)$. 

To state the precise result, we recall the global Selmer ratio $c(\phi_d) = \prod_v c_v(\phi_d)$, where for each place $v$ of $F$, we define 
\[c_v(\phi_d) = \dfrac{\#\mathrm{coker}\left(A_d(F_v) \to B_d(F_v)\right)}{\#\mathrm{ker}\left(A_d(F_v) \to B_d(F_v)\right)}.\]
 For $v \nmid 3\infty$, we have $c_v(\phi_d) = c_v(B_d)/c_v(A_d)$, where $c_v(A)$ is the Tamagawa number of $A$ over $F_v$. Thus, up to some subtle factors at places $v$ above $3$ and $\infty$, the number $c(\phi_d)$ is the ratio of the global Tamagawa numbers $c(B_d)/c(A_d)$. In particular, we have $c_v(\phi_d) \in 3^\Z$, and $c_v(\phi_d) = 1$ for all but finitely many $v$ (having fixed $d$).

We say $A[\phi]$ is \emph{almost everywhere locally a direct summand} of $A[\pi]$ if, for almost all places $v$ of $F$, the $G_{F_v}$-module $A[\phi]$ is a direct summand of $A[\pi]$.

\begin{theorem}\label{thm:selmer}
 Assume that $A[\phi]$ is almost everywhere locally a direct summand of $A[\pi]$. Then $\avg_d \#\Sel_{\phi_d}(A_d) = 1 + \avg_d c(\phi_d)$, where both averages are finite and taken over $d \in F^\times/F^{\times 2n}$, ordered by height.  
\end{theorem}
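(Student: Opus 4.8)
Our plan is to adapt the first-moment computation of \cite{bkls}, which treats quadratic twist families, to the $\mu_{2n}$-setting. The first step is to identify the twisted module $M_d := A_d[\phi_d]$ as a Galois module. Since $A[\pi] = A[1-\zeta]$, the endomorphism $\zeta$ acts as the identity on $A[\pi]$, hence on the order-$3$ subgroup $A[\phi]$; as $\mu_{2n} = \langle -\zeta\rangle$, its generator $-\zeta$ acts on $A[\phi]$ as $-1$. So twisting by $d$ multiplies the $G_F$-action on $A[\phi]$ by the quadratic character $\chi_d$ of $F(\sqrt d)/F$, whence $M_d \cong \F_3(\varepsilon\chi_d)$, where $\varepsilon$ is the quadratic character of $A[\phi]$; in particular each $\phi_d$ is again a $3$-isogeny, with Cartier-dual kernel $B_d[\hat\phi_d] \cong \mu_3(\varepsilon\chi_d)$. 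Thus, as far as the Galois module is concerned, the family behaves like a quadratic twist family; the genuinely new feature is that the local Selmer ratios $c_v(\phi_d)$ and the local conditions at places $v \nmid 3\infty$ of good reduction for $A$ depend on the full $\mu_{2n}$-twist --- through the reduction type of $A_d$ --- and not only on its quadratic part.

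Next comes the local analysis. For each place $v$ one describes, as a function of the class of $d$ in the finite group $F_v^\times / F_v^{\times 2n}$, both the local condition $L_v(d) := \mathrm{im}\bigl(B_d(F_v)/\phi_d A_d(F_v) \hookrightarrow H^1(F_v, M_d)\bigr)$ and the ratio $c_v(\phi_d) = \# L_v(d)/\# H^0(F_v, M_d)$. At $v \nmid 3\f_A\infty$ with $d \in \mathcal O_v^\times$ the condition is unramified and $c_v(\phi_d) = 1$. The hypothesis that $A[\phi]$ is almost everywhere locally a direct summand of $A[\pi]$ ensures that, for all $v \mid d$ outside a fixed finite set, $L_v(d)$ and $c_v(\phi_d)$ depend on $d$ only through $v(d) \bmod 2n$ (and a bounded amount of local splitting data) according to a list tabulated once and for all; this uniformity is precisely what can fail in general. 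At the finitely many remaining places, namely those above $3\f_A\infty$, the data depend only on the finitely many classes of $d$ in $F_v^\times / F_v^{\times 2n}$, which we enumerate directly; the place(s) above $3$ are the most delicate, since there $F_v^\times/F_v^{\times 2n}$ is large and the twist is wildly ramified.

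Now for the first moment: write $\#\Sel_{\phi_d}(A_d) = \sum_{x \in H^1(F, M_d)} \prod_v \mathbf 1[\mathrm{res}_v x \in L_v(d)]$. The class $x = 0$ lies in every $L_v(d)$ and contributes $1$ for every $d$; this is the ``$1+$''. A nonzero Selmer class is unramified outside $S(d) := \{v \mid 3\f_A d\infty\}$, so it lies in a group of order $3^{O(\#S + \omega(d))}$ by $S$-unit and $S$-class-group bounds; but since the local conditions at the primes $v \mid d$ are \emph{proper} subgroups of $H^1(F_v, M_d)$, a surviving nonzero class can be ramified at only boundedly many of them. Parametrizing such classes via the local description above --- equivalently via bounded-length factorizations of $d$ subject to reciprocity-type constraints --- one averages over $d \in \Sigma_X$, interchanges summation and average, and for each factorization type evaluates the density of admissible $d$ using an equidistribution estimate for the relevant quadratic and cubic residue symbols in the $\mu_{2n}$-twist family ordered by the height $h$ of \Cref{sec:avg-size-selmer}. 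Reassembling these densities into an Euler product, the total contribution of the nonzero classes collapses to $\prod_v\bigl(\text{$v$-adic average of }c_v(\phi_d)\bigr) = \avg_d \prod_v c_v(\phi_d) = \avg_d c(\phi_d)$, and adding the $1$ yields the theorem; one can sanity-check the shape of the answer against the Greenberg--Wiles formula relating $\#\Sel_{\phi_d}(A_d)$ and $\#\Sel_{\hat\phi_d}(B_d)$.

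Finiteness of both averages, and existence of the limits, follow from a tail bound: the same large sieve shows that nonzero classes ramified at a growing number of primes dividing $d$, and the $d$ with anomalously large $c(\phi_d)$, together contribute $o(\#\Sigma_X)$, while the Euler product $\prod_v(\text{$v$-adic average of }c_v)$ converges. The main obstacle is precisely this analytic input: proving, with a power-saving error term, the equidistribution of the pertinent character values as $d$ ranges over $\Sigma_X$ ordered by $h$ --- rather than by a field norm --- together with the attendant tail estimates. This, and not the algebraic reductions, is where the bulk of the work lies; a secondary difficulty is the explicit local bookkeeping at the wildly ramified place(s) above $3$.
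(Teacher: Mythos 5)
Your outline---identify $A_d[\phi_d]\simeq C_d$, analyze local conditions place by place, then take a first moment over $d$---agrees with the paper only up to the local analysis (your second step broadly parallels \Cref{thm:soluble orbits} and \Cref{thm:tamagawa-ratios}). The decisive step, however, is where your proposal has a genuine gap. A nonzero class in $H^1(F,A_d[\phi_d])\simeq H^1(F,C_d)$ corresponds to a non-Galois cubic extension of $F$ (a norm-one class in $K_d^\times/K_d^{\times 3}$ for the varying quadratic algebra $K_d$), not to a character of $G_F$, so its behaviour at primes dividing $d$ is not encoded by quadratic or cubic residue symbols over $F$, and there is no known large-sieve or reciprocity framework that evaluates the average number of such classes over $d$ ordered by height. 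What your ``parametrize by factorizations of $d$ and equidistribute symbols'' step silently asserts is a Davenport--Heilbronn-type count of cubic algebras with prescribed local conditions, uniformly in the twist family; that is precisely the hard content, and the paper obtains it by translating Selmer classes into $\SL_2(F)$-orbits of binary cubic forms of discriminant $d$, proving that soluble orbits admit \emph{integral} representatives (\Cref{thm:integralityaxiom}, which rests on comparing \Cref{thm:integralorbits} with \Cref{thm:soluble orbits}), and then invoking the geometry-of-numbers counts of \cite{BSW:globalI} as in \cite{elk}. Your intermediate claim that a surviving nonzero class ``can be ramified at only boundedly many'' primes dividing $d$ is also unjustified: soluble classes can be ramified at every place with $v(d)$ even and positive, and the number of such places is unbounded over the family.

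A second, telling problem is how you use the hypothesis that $A[\phi]$ is almost everywhere locally a direct summand of $A[\pi]$. You invoke it only to make the local tables ``uniform,'' but by \Cref{thm:soluble orbits}$(d)$ and \Cref{prop:units} the local conditions and ratios at good places depend only on $v(d)$ and the class of the unit part of $d$ in a finite group whether or not the summand condition holds; so your argument, as written, would go through without the hypothesis and would output $1+\avg_d c(\phi_d)$ unconditionally (and $\avg_d c(\phi_d)$ converges regardless, since $v(d)\geq 2$ occurs with density $O(q_v^{-2})$ and $c_v(\phi_d)$ is bounded). But the paper needs the hypothesis in an essential way: when the summand condition fails at a place with $v(d)=2$, the soluble unramified classes have no integral representatives (\Cref{thm:integralorbits}$(c)$ against \Cref{thm:soluble orbits}$(d)$), denominators of Selmer orbits grow with $d$, and the authors expect the average Selmer size may then be unbounded. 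An argument that never sees this dichotomy cannot be a correct proof of the theorem as stated.
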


This result is a simultaneous generalization of \cite[Thm 2.1]{bkls} and \cite[Thm.\ 1]{elk}.  Interestingly, the condition of being everywhere locally a direct summand, which is automatically satisfied for the families considered in \cites{bkls,elk}, seems to be an obstruction to computing the average size of $\#\Sel_{\phi_d}(A_d)$ in the entire family of twists, at least using our methods. 
In any case, if we only consider squarefree twists, then this obstruction goes away:

\begin{theorem}\label{thm:squarefreeselmer}
Let $\phi \colon A \to B$ be a $\zeta$-linear $3$-isogeny. Then the average size of $\#\Sel_{\phi_d}(A_d)$ over squarefree $d \in F^\times/F^{\times 2n}$ is finite and equal to $1 + \avg_d c(\phi_d)$. 
\end{theorem}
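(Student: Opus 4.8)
The plan is to run the Selmer-average machinery that underlies \Cref{thm:selmer}, modifying only the local analysis, which simplifies drastically once $d$ is squarefree. Recall that
\[\Sel_{\phi_d}(A_d)\;=\;\ker\Bigl(H^1(F,A_d[\phi_d])\longrightarrow\prod_v H^1(F_v,A_d[\phi_d])/W_{d,v}\Bigr),\]
where $W_{d,v}=\mathrm{im}\bigl(B_d(F_v)/\phi_d A_d(F_v)\hookrightarrow H^1(F_v,A_d[\phi_d])\bigr)$, so that $\#W_{d,v}=c_v(\phi_d)\cdot\#H^0(F_v,A_d[\phi_d])$. Since $\phi$ is $\zeta$-linear, $\pi=1-\zeta$ kills the order-$3$ group $A[\phi]\cong\F_3(\epsilon)$; hence $\zeta$ acts trivially on $A[\phi]$, the $\mu_{2n}$-twist acts on it through the quotient $\mu_{2n}\to\{\pm1\}$, and $A_d[\phi_d]$ is the quadratic twist of $A[\phi]$ by $d$, depending only on $d\bmod F^{\times2}$. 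Writing $\Sigma_X^{\mathrm{sf}}$ for the squarefree $d$ with $h(d)<X$, the first moment splits as
\[\sum_{d\in\Sigma_X^{\mathrm{sf}}}\#\Sel_{\phi_d}(A_d)\;=\;\#\Sigma_X^{\mathrm{sf}}\;+\;\sum_{d\in\Sigma_X^{\mathrm{sf}}}\#\{\,0\ne x\in\Sel_{\phi_d}(A_d)\,\},\]
so the trivial class accounts for the ``$+1$'' and it remains to show the second sum equals $\bigl(\avg_d c(\phi_d)+o(1)\bigr)\#\Sigma_X^{\mathrm{sf}}$ with $\avg_d c(\phi_d)$ finite.

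The crucial point is the determination of $W_{d,v}$ and $c_v(\phi_d)$ at the places $v$ dividing $d$, and this is exactly where squarefreeness intervenes. For squarefree $d$ one has $v(d)\equiv0$ or $1\pmod{2n}$ at every finite place, so at each $v\nmid3\f_A\infty$ the twist $A_d$ has either good reduction (if $v(d)\equiv0$) or tame additive reduction of a single mild type (if $v(d)\equiv1$); a local computation then shows that the pair $(W_{d,v},c_v(\phi_d))$ depends only on this dichotomy together with the splitting type of $v$ in the fixed finite extension cutting out the Galois action on $A[\pi]$, and moreover that $c_v(\phi_d)=1$ at all such $v$. Consequently $c(\phi_d)=\prod_{v\in S}c_v(\phi_d)$ is supported on the \emph{fixed} finite set $S=\{v\mid3\f_A\infty\}$ and takes only finitely many values, so $\avg_d c(\phi_d)$ is automatically finite. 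For general $d$ none of this is available: $v(d)$ can be any residue mod $2n$, the reduction types of $A_d$ proliferate, $W_{d,v}$ is not pinned down without the almost-everywhere-direct-summand hypothesis, and residues such as $v(d)\equiv2$ produce reduction types with $c_v(\phi_d)\ne1$ along a divergent set of places --- which is precisely the obstruction (and the likely source of unboundedness) that the squarefree restriction removes.

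With the local data in hand, we evaluate the second sum by interchanging the order of summation: for each nonzero class $x$ that occurs --- lying in $H^1(F,A[\phi]\otimes\chi)$ for the quadratic character $\chi$ attached to $d$ --- we count the $d\in\Sigma_X^{\mathrm{sf}}$ with $\mathrm{res}_v(x)\in W_{d,v}$ at every place. Away from the finite set where $x$ or $A$ is ramified, this is a local condition on the behaviour of $d$ at $v$ of positive, explicitly computable density --- and it is vacuous at the infinitely many places where both $d$ and $A_d$ are unramified --- so the number of admissible $d$ is governed by a product of local densities, which one evaluates by a squarefree sieve attached to the height $h$ (the sets $\Sigma_X$ are finite and the squarefree $d$ have positive density, as in \Cref{sec:avg-size-selmer}). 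Summing these counts over $x$ and invoking local Tate duality --- namely that $W_{d,v}$ for $\phi_d$ and the analogous image for the dual isogeny $\hat\phi_d\colon\hat B_d\to\hat A_d$ are exact orthogonal complements --- together with Cassels' relation between $\#\Sel_{\phi_d}(A_d)$ and $\#\Sel_{\hat\phi_d}(\hat B_d)$, collapses the total to $\avg_d c(\phi_d)$, yielding the claimed value $1+\avg_d c(\phi_d)$.

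The main obstacle is making the interchange of summation uniform in $X$: one must show that cohomology classes $x$ ramified at many places outside $S$ contribute negligibly to the first moment, uniformly as $X\to\infty$. This is the familiar ``no contribution from deep Selmer classes'' estimate, but it must be carried out over the positive-density set of squarefree $d$ ordered by $h$ rather than over a box of rational integers, which calls for a careful large-sieve (or second-moment) bound adapted to that setting. Once this tail estimate is in place, the remaining ingredients --- the explicit local image computations, the density evaluations, and the bookkeeping with Cassels' formula --- are routine, precisely because for squarefree $d$ only the valuations $0$ and $1$ occur.
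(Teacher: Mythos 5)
Your local analysis (the dichotomy $v(d)\equiv 0$ or $1$ at good places away from $3$, the identity $\#W_{d,v}=c_v(\phi_d)\cdot\#H^0(F_v,A_d[\phi_d])$, and the conclusion that $c(\phi_d)$ is supported on the fixed set $S=\{v\mid 3\f_A\infty\}$, whence $\avg_d c(\phi_d)$ is finite) is consistent with what the paper proves in \Cref{thm:soluble orbits} and \Cref{thm:tamagawa-ratios}. But the global counting step, which is the entire content of the theorem, is missing, and the route you propose for it does not work as described. You suggest interchanging the order of summation and, for each fixed nonzero class $x$, counting the squarefree $d$ of bounded height for which $x$ satisfies all local conditions, ``governed by a product of local densities'' via a squarefree sieve. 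The structural problem is that $x$ lives in $H^1(F,A[\phi]\otimes\chi_d)$, a group depending on the square class of $d$, and a square class in $F^\times/F^{\times 2n}$ contains (up to bounded ambiguity from units) only \emph{one} squarefree representative; so for a fixed $x$ there is no family of $d$'s to average over, and the ``local density of admissible $d$'' count has no meaning. What actually has to be counted is the total number of pairs $(d,x)$ with $d$ squarefree of bounded height and $x$ a nontrivial everywhere-locally-soluble class, i.e.\ a count of cubic fields/binary cubic forms of bounded discriminant with prescribed local behaviour. That is a Davenport--Heilbronn/geometry-of-numbers problem, not a character-sum or large-sieve problem, and your appeal to a ``routine'' squarefree sieve plus a ``no deep Selmer classes'' tail estimate is a placeholder for precisely this hard step, not a proof of it. Likewise, the assertion that local Tate duality and the Cassels/Greenberg--Wiles relation ``collapse the total to $\avg_d c(\phi_d)$'' is unsubstantiated: in the correct argument the factor $c_v(\phi_d)$ enters as the local mass of soluble integral orbits in the point count, not through a duality identity.

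For comparison, the paper's proof identifies $\Sel_{\phi_d}(A_d)$ with $\SL_2(F)$-orbits of binary cubic forms of discriminant $d$ (or $dk$), proves the crucial integrality statement \Cref{thm:integralityaxiom}(1) --- for squarefree $d$, at every good place $v\nmid 6\infty$ each soluble orbit has a representative in $V(\O_v)$, by comparing the cubic-ring analysis of \Cref{thm:integralorbits} with \Cref{thm:soluble orbits} --- and then counts integral orbits with prescribed local conditions by the geometry-of-numbers machinery of \cite{BSW:globalI} and \cite{elk}, which is what produces the formula $1+\avg_d c(\phi_d)$ in \Cref{thm:avgSel2}. None of these three ingredients (the orbit parameterization, the integrality of soluble orbits for squarefree discriminants, the lattice-point count) appears in your sketch, so the proposal has a genuine gap at its core rather than being an alternative proof.
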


The quantity $\avg_d c(\phi_d)$ is governed by local arithmetic data which can be made explicit in certain cases. For example, in \Cref{thm:cmintro} we have $c(\pi_d) = c(\phi_d) = 1$, for all $d$. 
However, in general, computing the exact value of $\avg_d c(\phi_d)$ is hard. Nonetheless, one can give an explicit upper bound on $\avg_d \#\Sel_{\phi_d}(A_d)$ depending only on $F$, $\dim A$, and the number of primes dividing the conductor of $A$ (\Cref{prop:Selmer-rank-explicit-bound}).

In \Cref{sec:twists-of-ab-vars} we show how to deduce \Cref{thm:completely-reducible} from Theorems \ref{thm:selmer} and \ref{thm:squarefreeselmer}. In the remainder of the introduction, we discuss the proofs of the latter two results.

\subsection{Methods}

We prove Theorems \ref{thm:selmer} and \ref{thm:squarefreeselmer} using geometry-of-numbers methods. As in the previous works \cites{bkls, elk} of the first author and his collaborators, we first identify the elements of $\Sel_{\phi_d}(A_d)$ with $\SL_2(F)$-orbits of binary cubic forms of discriminant $d$. We then wish to use lattice-point counting techniques, which have been extended to global fields in \cite{BSW:globalI}, to count the number of such orbits of bounded discriminant. However, it is not at all clear (and indeed, it is not always true) that the $\SL_2(F)$-orbits corresponding to Selmer elements are \emph{integral}, i.e.\ that they contain cubic forms whose coefficients are algebraic integers. This integrality is of course essential for lattice-point counting.

For the quadratic twist families considered in \cite{bkls}, integrality follows quickly once it is realized that the local Selmer conditions are very mild outside the finitely many primes dividing the conductor of $A$. One needs only to ``clear denominators'' at those finitely many primes, and then the Selmer orbits become integral. For the families considered in this paper, the question of integrality is more subtle. The first interesting case is the family of sextic twists $E \colon y^2 = x^3 + d$ considered in \cite{elk}, which is the unique twist family of elliptic curves with $3$-power $\zeta$-multiplication. In that special family, the authors give an \emph{explicit} bijection between Selmer elements and binary cubic forms. Integrality is then proven using a direct connection with Bhargava's higher composition laws. 

In the more general setup of this paper, we cannot rely on such an explicit parameterization, nor do we expect one to exist. Instead, our method is more abstract and involves two independent steps. First, we give a complete analysis of the \emph{integral} arithmetic invariant theory for the representation $\Sym^3\Z^2$ of $\SL_2$, in the sense that we identify precisely which $\SL_2(F_v)$-orbits of binary cubic forms, over a local field $F_v$, have integral representatives. We show that once the valuation of the discriminant $v(d)$ is at least $3$, all orbits have integral representatives, and we determine what happens for $v(d) \leq 2$ as well. Our strategy is to translate the question into one about cubic rings, whose local structure we understand well. Second, we study the Selmer groups $\Sel_{\phi_d}(A_d)$ from a purely cohomological point of view, in the spirit of Mazur and Rubin \cite{MazurRubinFindingLarge,KMR}.  Upon comparing the results, we find that for all but finitely many primes $v$, the Selmer orbits of discriminant $d$ are $v$-integral, except possibly when $v(d) = 2$. In particular, the Selmer orbits are integral when $d$ is squarefree. When $v(d) = 2$, we find that the local direct summand condition on $A[\phi] \subset A[\pi]$ exactly matches up with the local integrality condition. 

\subsection{Future directions}
In many situations, the integral orbits of a reductive group $G$ acting on a representation $V$ have been shown to parameterize Selmer elements in a certain explicit family of abelian varieties \cite{BS2Selmer,BS3Selmer,BhargavaHo,BhargavaGross,Thorne:Vinberg,laga}. The results of this paper show that there is a tremendous amount of flexibility in these constructions, in the sense that $(G,V)$ can be used to parameterize Selmer elements in very different looking families over the same space of invariants $V/\!\!/G$ (which is $\A^1$ in our case). Our analysis of the integral arithmetic invariant theory of $(\SL_2, \Sym^3 \Z^2)$ can be adapted to some of these other representations $(G,V)$, so it would be interesting to understand the following (vaguely formulated) question. For which families $\phi \colon \mathcal{A} \to \mathcal{B}$ of isogenies of abelian varieties over $S = V /\!\!/ G$ can the elements of $\Sel_{\phi_s}(\mathcal{A}_s)$, for $s \in S(F)$, be parameterized by orbits of $G(F)$ on $V(F)$?  

\subsection{Outline}

We begin in \Cref{sec:zeta-mult} with basics on abelian varieties with $\zeta$-multiplication. Sections \ref{sect: integral}-\ref{sec:twists-of-ab-vars} are the technical heart of the paper. In \Cref{sect: integral}, we give a complete analysis of the integral arithmetic theory for the representation $\Sym^3\Z^2$ of $\SL_2$. In \Cref{sec:localselmer}, we give a parallel, but independent analysis of the Selmer groups $\Sel_{\phi_d}(A_d)$ of the twists $\phi_d$ of a general $\zeta$-linear $3$-isogeny. In \Cref{sec:selmer-groups-and-integrality}, we combine these two sections and prove Theorems \ref{thm:selmer} and \ref{thm:squarefreeselmer}. In \Cref{sec:twists-of-ab-vars}, we apply the results of \Cref{sec:selmer-groups-and-integrality} to prove Theorems \ref{thm:completely-reducible} and \ref{thm:explicit-rank-bound-intro}. 

The remainder of the paper is devoted to applications of our main results. In \Cref{sec:trigonal}, we study the average ranks of the Jacobians of the curves $y^3 = xf(x^{3^{m-1}})$ and $y^{3^m} = f(x)$ and prove \Cref{cor:trigonal}. In \Cref{sec:cm}, we give explicit results for abelian varieties with CM and prove \Cref{thm:cmintro}. In \Cref{sec:uniform}, we study rational points in twist families of curves as in \Cref{subsec:qmintro}, and prove Theorems \ref{thm:uniformcurves} and \ref{thm:uniformtheta}. Finally, in \Cref{sec:qm-surfaces}, we study twist families of genus $3$ curves and prove \Cref{thm:genus3example}.

\section{Abelian varieties with \texorpdfstring{$\zeta$}{zeta}-multiplication}\label{sec:zeta-mult}

Let $F$ be a field of characteristic $0$. Fix an odd prime $p$, an integer $n = p^m$, and a primitive $n$-th root of unity $\zeta = \zeta_n$. In this section, $\varphi$ denotes Euler's totient function.

\begin{definition}
 An abelian variety with $\zeta$-multiplication is a pair $(A, \iota_A)$, where $A$ is an abelian variety over $F$ and $\iota_A\: \Z[\zeta]\hookrightarrow\End_{\bar F} A$ is a $G_F$-equivariant injective ring homomorphism.
\end{definition}

We usually suppress any mention of $\iota_A$ and view $\Z[\zeta]$ as a subring of $\End_{\bar F}A$. In this section, we collect some basic facts and constructions relating to abelian varieties with $\zeta$-multiplication.

\subsection{The isogeny \texorpdfstring{$\pi$}{pi}}
If $A$ is an abelian variety with $\zeta$-multiplication, then since $1-\zeta$ divides $p$ in $\Z[\zeta]$, the map $1 - \zeta \in \End_{\bar F} A$ is an isogeny whose degree is a power of $p$. 

\begin{lemma}\label{lem:descent}
 The kernel of $1- \zeta$ is $G_F$-stable, and hence is an  $F$-subgroup of $A[p]$. In particular, there is an abelian variety $\Aone$ over $F$, such that the endomorphism $1 - \zeta$ of $A$ over $\overline F$ descends to an isogeny $\pi \colon A\to A^{(1)}$ over $F$.
\end{lemma}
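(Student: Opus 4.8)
The plan is to exhibit $A^{(1)}$ as a quotient of $A$ by the subgroup $A[1-\zeta] := \ker(1-\zeta)$, once we know that this subgroup is stable under the action of $G_F$. So the first and only real point is the $G_F$-stability; the descent of the isogeny is then formal.

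First I would show $A[1-\zeta] \subset A[p]$. Since $\Z[\zeta_n]$ is the ring of integers of $\Q(\zeta_n)$ and $n = p^m$, the prime $p$ is totally ramified with $(p) = (1-\zeta)^{\varphi(n)}$, so in particular $1-\zeta$ divides $p$ in $\Z[\zeta_n]$; write $p = (1-\zeta)u$ with $u \in \Z[\zeta_n]$. Applying $\iota_A$, we get $[p] = \iota_A(u)\circ(1-\zeta)$ in $\End_{\bar F}A$, hence $\ker(1-\zeta) \subseteq \ker[p] = A[p]$. In particular $A[1-\zeta]$ is a finite subgroup scheme (in fact a finite $\F_p$-vector space, as its order is a power of $p$ by the remark preceding the lemma). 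Next, for $G_F$-stability: for $\sigma \in G_F$ and $x \in \overline F$-points of $A$, the equivariance hypothesis on $\iota_A$ says that $\sigma\bigl((1-\zeta)(x)\bigr) = {}^\sigma(1-\zeta)\,(\sigma x)$, where ${}^\sigma(1-\zeta)$ is the endomorphism obtained by letting $\sigma$ act on $\iota_A$. But $\iota_A$ is $G_F$-equivariant precisely when, for all $\sigma$, the automorphism $\sigma$ of $\Q(\zeta_n)$ (which sends $\zeta \mapsto \zeta^{a(\sigma)}$ for some $a(\sigma) \in (\Z/n\Z)^\times$) is intertwined by $\iota_A$ with conjugation by $\sigma$ on $\End_{\bar F}A$; concretely, ${}^\sigma\iota_A(1-\zeta) = \iota_A(1-\zeta^{a(\sigma)})$. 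Since $1 - \zeta^{a}$ and $1-\zeta$ differ by a unit of $\Z[\zeta_n]$ (both generate the unique prime above $p$), the endomorphisms $\iota_A(1-\zeta^a)$ and $\iota_A(1-\zeta)$ have the same kernel. Therefore if $(1-\zeta)(x) = 0$ then $\sigma\bigl((1-\zeta)(x)\bigr) = 0$ and, running the intertwining identity, $(1-\zeta)(\sigma x) = 0$ as well; hence $\sigma x \in A[1-\zeta]$. This proves $A[1-\zeta]$ is $G_F$-stable, and since it is contained in $A[p]$ it is a $G_F$-stable subgroup of $A[p]$, which descends to a finite subgroup scheme over $F$.

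Finally, the descent of the isogeny. Having a $G_F$-stable finite subgroup $H := A[1-\zeta] \subset A$ defined over $F$, I would invoke the standard fact that the quotient $A^{(1)} := A/H$ exists as an abelian variety over $F$, with the quotient map $\pi \colon A \to A^{(1)}$ an isogeny defined over $F$ (e.g.\ via Galois descent of the base change $A_{\bar F} \to A_{\bar F}/H_{\bar F}$, using that $H$ is $G_F$-stable so the descent datum on the quotient is effective, or directly by taking the quotient in the category of group schemes over $F$). Over $\overline F$, the map $1-\zeta \colon A_{\bar F} \to A_{\bar F}$ has kernel exactly $H_{\bar F}$, so it factors through $\pi_{\bar F}$ as $1-\zeta = \psi \circ \pi_{\bar F}$ for a unique isogeny $\psi \colon A^{(1)}_{\bar F} \xrightarrow{\sim} A_{\bar F}/H_{\bar F} = A^{(1)}_{\bar F}$; after identifying $A^{(1)}_{\bar F}$ with $A_{\bar F}/H_{\bar F}$ this $\psi$ is the identity, so $\pi_{\bar F}$ literally is the endomorphism $1-\zeta$ up to this canonical identification, which is exactly the assertion that $1-\zeta$ "descends to" $\pi$.

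The only mildly subtle point is unwinding what "$G_F$-equivariant ring homomorphism $\iota_A$" means at the level of kernels — i.e.\ that $\sigma$-conjugation sends $1-\zeta$ to $1-\zeta^{a(\sigma)}$, and that these have equal kernels because they differ by a unit — but this is elementary cyclotomic algebra. Everything else (existence of quotients of abelian varieties by $F$-rational finite subgroups, the universal property giving the factorization over $\overline F$) is standard, so I do not anticipate a genuine obstacle here; the lemma is essentially a bookkeeping statement packaging these facts.
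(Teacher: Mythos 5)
Your proof is correct and follows essentially the same route as the paper: Galois-stability of $\ker(1-\zeta)$ via the $G_F$-equivariance of $\iota_A$, the containment in $A[p]$ from $(1-\zeta)^{\varphi(n)}=(p)$, and then the standard quotient/descent formalism. The only cosmetic difference is that the paper checks stability by noting $\zeta^i P = P$ for $P$ fixed by $\zeta$, whereas you invoke the cyclotomic unit fact that $1-\zeta^{a}$ and $1-\zeta$ generate the same ideal, hence have the same kernel; these are the same observation in different packaging.
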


\begin{proof}
 If $P\in A_{\bar F}[1-\zeta]$ and $\sigma\in G_F$, then $\zeta^{\sigma\ii} = \zeta^i$ for some $i\in (\Z/n\Z)\t$ and
 \[\zeta(P^\sigma) = (\zeta^{\sigma\ii}(P))^\sigma = (\zeta^i P)^\sigma = P^\sigma,\]
 which shows that $P^\sigma \in A_{\bar F}[1-\zeta]$. Hence, $A_{\bar F}[1-\zeta]$ descends to an $F$-subgroup $H$ of $A$. Thus, we obtain a an isogeny $\pi\:A\to A/H =: \Aone$ over $F$.
 
 The equality of ideals $(1-\zeta)^{\varphi(n)} = (p)$ in $\Z[\zeta]$ shows that $A_{\bar F}[1-\zeta]\sub A_{\bar F}[p]$.
\end{proof}

If $\zeta\in F$, then $\Aone = A/A[1-\zeta] \simeq A$, and $\pi\:A\to \Aone$ can be identified with the endomorphism $1-\zeta$. If $\zeta \notin F$, then $\Aone = A/A[\pi]$ is a twist of $A$ which we now identify:

\begin{lemma}\label{lem:zeta-twist}
$\Aone$ is the twist of $A$ corresponding to the cocycle $\sigma\mapsto \frac{1-\zeta^\sigma}{1-\zeta}\in H^1(F, \Z[\zeta]^\times)$. 
\end{lemma}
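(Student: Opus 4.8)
The plan is to unwind the definition of a twist by an automorphism-valued cocycle and identify the relevant cocycle directly from the descent construction in Lemma 1.3. Recall that $\Z[\zeta]^\times \subset \Aut_{\bar F} A$ via $\iota_A$, and any class in $H^1(G_F, \Z[\zeta]^\times)$ produces a twist $A^\xi$ of $A$ that is isomorphic to $A$ over $\bar F$, the isomorphism $\psi \colon A^\xi_{\bar F} \xrightarrow{\sim} A_{\bar F}$ satisfying $\psi \circ \sigma(\psi)^{-1} = \xi_\sigma$ for all $\sigma \in G_F$. So what I need to exhibit is a $\bar F$-isomorphism $\psi \colon \Aone_{\bar F} \to A_{\bar F}$ whose associated cocycle is $\sigma \mapsto \tfrac{1 - \zeta^\sigma}{1 - \zeta}$.

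First I would observe that, over $\bar F$, the isogeny $\pi \colon A \to \Aone$ becomes identified (via the descent isomorphism) with the endomorphism $1 - \zeta \in \End_{\bar F} A$: indeed $\Aone_{\bar F} = A_{\bar F}/A_{\bar F}[1-\zeta]$, and $1-\zeta$ factors as $A_{\bar F} \to A_{\bar F}/A_{\bar F}[1-\zeta] \xrightarrow{\bar\psi} A_{\bar F}$ for a unique isomorphism $\bar\psi$. Take $\psi := \bar\psi$. Now I would compute the cocycle. The subtlety is that $\pi$ is defined over $F$ — so $\sigma(\pi) = \pi$ as maps $A_{\bar F} \to \Aone_{\bar F}$ — whereas the endomorphism $1 - \zeta$ is \emph{not} Galois-stable: applying $\sigma$ to it gives $1 - \zeta^\sigma = \tfrac{1-\zeta^\sigma}{1-\zeta}\cdot(1-\zeta)$, using that $\tfrac{1-\zeta^\sigma}{1-\zeta} \in \Z[\zeta]^\times$ (this is the standard cyclotomic unit fact, valid since $\zeta$ has prime-power order; it underlies the assertion that the formula lands in $H^1(F, \Z[\zeta]^\times)$). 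Combining $1 - \zeta = \psi \circ \pi$ with $\sigma(1-\zeta) = \sigma(\psi) \circ \sigma(\pi) = \sigma(\psi)\circ\pi$ and $\sigma(1-\zeta) = \tfrac{1-\zeta^\sigma}{1-\zeta}\circ(1-\zeta) = \tfrac{1-\zeta^\sigma}{1-\zeta}\circ\psi\circ\pi$, and cancelling the (surjective) isogeny $\pi$, I get $\sigma(\psi) = \tfrac{1-\zeta^\sigma}{1-\zeta}\circ\psi$, i.e. $\psi\circ\sigma(\psi)^{-1} = \psi \circ \psi^{-1}\circ\bigl(\tfrac{1-\zeta^\sigma}{1-\zeta}\bigr)^{-1} = \bigl(\tfrac{1-\zeta^\sigma}{1-\zeta}\bigr)^{-1}$. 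Depending on the sign/inverse convention for twisting cocycles this is exactly $\sigma \mapsto \tfrac{1-\zeta^\sigma}{1-\zeta}$ or its inverse; I would fix conventions so the statement reads as written (the two conventions differ by $\xi \mapsto \xi^{-1}$, and $H^1(F,\Z[\zeta]^\times)$ is abelian, so this is cosmetic but should be stated carefully).

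The main obstacle — really the only non-bookkeeping point — is verifying that $u_\sigma := \tfrac{1-\zeta^\sigma}{1-\zeta}$ genuinely lies in $\Z[\zeta]^\times$ (not merely in the fraction field $\Q(\zeta)^\times$), so that the formula defines a class in $H^1(G_F,\Z[\zeta]^\times)$ and the manipulation above takes place among honest automorphisms of $A_{\bar F}$. For $\zeta$ of order $n = p^m$ this is classical: writing $\zeta^\sigma = \zeta^i$ with $i \in (\Z/n\Z)^\times$, one has $\tfrac{1-\zeta^i}{1-\zeta} = 1 + \zeta + \cdots + \zeta^{i-1} \in \Z[\zeta]$, and symmetrically $\tfrac{1-\zeta}{1-\zeta^i} \in \Z[\zeta]$ by choosing $i'$ with $ii' \equiv 1 \pmod n$. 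I would also note the cocycle condition $u_{\sigma\tau} = u_\sigma \cdot \sigma(u_\tau)$, which is immediate from $1 - \zeta^{\sigma\tau} = \tfrac{1-\zeta^{\sigma\tau}}{1-\zeta^\tau}\cdot(1-\zeta^\tau)$ and applying $\sigma$ appropriately; and that $u$ is trivial exactly when $\zeta \in F$, recovering the first sentence after Lemma 1.3. With these points in place the identification of $\Aone$ as the asserted twist is complete.
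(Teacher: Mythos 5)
Your proposal is correct and follows essentially the same route as the paper: the paper's proof simply observes that over $F(\zeta)$ the map $\eta\colon A/A[1-\zeta]\to A$, $\overline x\mapsto(1-\zeta)x$, is an isomorphism and reads off the cocycle $\sigma\mapsto\eta^\sigma\eta^{-1}=\frac{1-\zeta^\sigma}{1-\zeta}$, which is exactly your $\psi$ up to the (cosmetic, as you note) choice of cocycle convention. The extra details you supply --- that $\frac{1-\zeta^\sigma}{1-\zeta}\in\Z[\zeta]^\times$ and the cocycle identity --- are standard facts the paper leaves implicit, so no discrepancy.
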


\begin{proof}
Over $F(\zeta)$, the map $\eta\colon A/A[1-\zeta]\to A$ given by $\overline x\mapsto (1-\zeta)x$ defines an isomorphism. Hence, $\Aone$ is the twist corresponding to the cocycle $\sigma\mapsto \eta{}^{\sigma}\eta^{-1}$.
\end{proof}

The abelian variety $\Aone$ also has $\zeta$-multiplication. Iterating \Cref{lem:descent}, for each integer $s$, we obtain an abelian variety $\As= A/A[(1-\zeta)^s]$. As in \Cref{lem:zeta-twist}, $\As$ is isomorphic to the twist of $A$ corresponding to the cocycle 
\[\sigma\mapsto \frac{(1-\zeta^\sigma)^s}{(1-\zeta)^s}\in H^1(F, \Z[\zeta]^\times).\] We define $\pi^s$ to be the corresponding isogeny $A = A^{(0)}\to \As$.

Note that $A^{(\varphi(n))} \simeq A$ and that $\pi^{\varphi(n)}\:A\to A$ is multiplication by $pu$, for some unit $u\in\Aut A$. In particular, when writing $\As$, we can always consider $s$ modulo $\varphi(n) = p^{m-1}(p-1)$, and we have inclusions
\[A[\pi]\sub A[\pi^2]\sub\cdots \sub A[\pi^{\varphi(n)-1}]\sub A[\pi^{\varphi(n)}] = A[p].\]

In general, the Galois action on $\As$ is related to the Galois action on $A$ in a convoluted way. However, on a subset of the torsion of $A$, the action is especially simple.

\begin{lemma}\label{lem:tensor-cyclotomic}
 Let $s = p^r$, for some $0\le r< m$, and let $i\in\Z$. Then, as $G_F$-modules,
 \[A^{(is)}[\pi^s] \simeq A[\pi^s]\tensor\chi_p^{i}, \]
 where $\chi_p$ is the mod $p$ cyclotomic character.
\end{lemma}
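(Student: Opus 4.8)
The plan is to unwind the twist defining $A^{(ir)}$ and reduce the statement to an elementary congruence for cyclotomic units in $\Z[\zeta]$. Note first that $\chi_p$ is trivial on $G_{F(\zeta)}$, since $\zeta_p = \zeta^{p^{m-1}} \in F(\zeta)$, so over $F(\zeta)$ the asserted isomorphism is simply $A^{(ir)}[\pi^r] \cong A[\pi^r]$; this is witnessed by the $F(\zeta)$-isomorphism $\eta \colon A^{(ir)}_{F(\zeta)} \to A_{F(\zeta)}$, $\overline{x} \mapsto (1-\zeta)^{ir}x$, of \Cref{lem:zeta-twist}, since $\pi^r \colon A^{(ir)} \to A^{((i+1)r)}$ corresponds, under these isomorphisms, to the endomorphism $(1-\zeta)^r$ of $A$, so that $\eta$ restricts to an isomorphism $A^{(ir)}[\pi^r] \cong A[(1-\zeta)^r] = A[\pi^r]$. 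The content of the lemma is to see how the $G_F$-action is twisted. By \Cref{lem:zeta-twist} and the discussion following it, $A^{(ir)}$ is the twist of $A$ by the cocycle $\sigma \mapsto \bigl(\tfrac{1-\zeta^\sigma}{1-\zeta}\bigr)^{ir} \in \Z[\zeta]^\times$; a direct computation with this cocycle (as in the proof of \Cref{lem:zeta-twist}) identifies $A^{(ir)}[\pi^r]$, via $\eta$, with the group $A[\pi^r] = A[(1-\zeta)^r]$ carrying the $G_F$-action $\sigma \mapsto u_\sigma \circ \sigma$, where $u_\sigma = \bigl(\tfrac{1-\zeta^\sigma}{1-\zeta}\bigr)^{\pm ir} \in \Z[\zeta]^\times$, the sign being determined once and for all by the normalization of the twist. (This action is well defined because units of $\Z[\zeta]$ preserve $A[(1-\zeta)^r]$.)

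It then remains to show that $u_\sigma$ acts on $A[(1-\zeta)^r]$ as multiplication by the scalar $\chi_p(\sigma)^{\pm i} \in \F_p^\times$. Since $A[(1-\zeta)^r]$ is killed by $(1-\zeta)^r = (1-\zeta)^{p^k}$, this reduces to the congruence
\[
  \Bigl(\frac{1-\zeta^\sigma}{1-\zeta}\Bigr)^{p^k} \equiv \chi_p(\sigma) \pmod{(1-\zeta)^{p^k}\Z[\zeta]} \qquad (\sigma \in G_F),
\]
after which one raises to the $(\pm i)$-th power, noting that both sides are units in the finite ring $\Z[\zeta]/(1-\zeta)^{p^k}$. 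To prove it, write $\zeta^\sigma = \zeta^a$ with $1 \le a \le n-1$ coprime to $p$; then the cyclotomic unit $v := \tfrac{1-\zeta^a}{1-\zeta} = 1 + \zeta + \cdots + \zeta^{a-1}$ satisfies $v \equiv a \pmod{(1-\zeta)}$, while $a \equiv \chi_p(\sigma) \pmod p$. Writing $v = a + (1-\zeta)w$ and expanding $v^{p^k} = \sum_{j=0}^{p^k} \binom{p^k}{j} a^{p^k - j}(1-\zeta)^j w^j$, I would invoke the standard valuation $v_p\!\bigl(\binom{p^k}{j}\bigr) = k - v_p(j) \ge 1$ for $0 < j < p^k$, together with $p\Z[\zeta] = (1-\zeta)^{\varphi(n)}\Z[\zeta]$ and $\varphi(n) = p^{m-1}(p-1) \ge p^{m-1} \ge p^k$ (here the hypothesis $k < m$ is used), to see that every term with $j \ge 1$ lies in $(1-\zeta)^{p^k}\Z[\zeta]$. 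Hence $v^{p^k} \equiv a^{p^k} \pmod{(1-\zeta)^{p^k}}$, and since $a^{p^k} \equiv a \pmod p$ by Fermat's little theorem and $p \in (1-\zeta)^{p^k}\Z[\zeta]$, the displayed congruence follows. This yields $A^{(ir)}[\pi^r] \cong A[\pi^r] \otimes \chi_p^{\pm i}$, as desired.

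I expect the only real obstacle to be the congruence just discussed, and specifically the fact that passing to $p^k$-th powers promotes a congruence modulo $(1-\zeta)$ to a congruence modulo $(1-\zeta)^{p^k}$ — it is exactly here that the prime-power shape $r = p^k$ is essential; for a general $r$ the statement fails, and $A^{(ir)}[\pi^r]$ need not even be a scalar twist of $A[\pi^r]$. The remaining point that would need care is the bookkeeping of twisting conventions required to confirm that the relevant character is $\chi_p^{i}$ and not $\chi_p^{-i}$; this is a routine but fiddly sign-tracking exercise, and it plays no role in the applications, where $p = 3$ and $\chi_p$ has order $2$.
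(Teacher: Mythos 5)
Your proposal is correct and takes essentially the same route as the paper: both unwind the twisting cocycle $\sigma\mapsto\bigl(\tfrac{1-\zeta^\sigma}{1-\zeta}\bigr)^{ir}$ from \Cref{lem:zeta-twist} and reduce the lemma to the congruence $\bigl(\tfrac{1-\zeta^\sigma}{1-\zeta}\bigr)^{p^k}\equiv\chi_p(\sigma)\pmod{(1-\zeta)^{p^k}}$, which the paper verifies by the freshman's-dream identity $(1+\zeta+\cdots+\zeta^{s-1})^{p^k}\equiv 1+\zeta^{p^k}+\cdots+\zeta^{p^k(s-1)}$ together with $\zeta^{p^k}P^\sigma=P^\sigma$, whereas you use a binomial expansion with the valuation of $\binom{p^k}{j}$ --- an interchangeable detail. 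The only loose end is your undetermined sign $\pm i$: with the paper's convention the twisted action is $P\mapsto\tfrac{(1-\zeta^\sigma)^{ir}}{(1-\zeta)^{ir}}P^\sigma$, which yields $\chi_p^{i}$ on the nose, so the sign-tracking you defer is settled in one line and is not a genuine gap.
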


\begin{proof}
 By \Cref{lem:zeta-twist}, the abelian variety $A^{(is)}$ is the twist of $A$ corresponding to the cocycle $\sigma\mapsto \frac{(1-\zeta^\sigma)^{is}}{(1-\zeta)^{is}}\in H^1(F, \Z[\zeta]^\times)$. Equivalently, there is an isomorphism $\phi\:A\to A^{(is)}$ over $F(\zeta)$ such that for all $\sigma\in G_F$ and $P\in A(\overline F)$, we have
    \[(\phi\ii)^\sigma \circ \phi(P) = \frac{(1-\zeta^\sigma)^{is}}{(1-\zeta)^{is}}P.\]
 
 Hence, if $P\in A[\pi^s]$ and $\sigma \in G_F$, then in $A^{(is)}[\pi^s]$, we have
 \begin{align*}
  (\phi(P))^\sigma = \frac{(1-\zeta^\sigma)^{is}}{(1-\zeta)^{is}} \phi(P^\sigma).
 \end{align*}
 
 Suppose that $\sigma\:\zeta\mapsto \zeta^j$ for some $j\in (\Z/n\Z)\t$. Then, since $ (1-\zeta^{p^r})\phi(P^\sigma) =(1-\zeta)^{p^r}\phi(P^\sigma) = 0$, we have
 \begin{align*}
  \frac{(1-\zeta^\sigma)^{ip^r}}{(1-\zeta)^{ip^r}} \phi(P^\sigma) &=(1+ \zeta + \zeta^2 + \cdots + \zeta^{s -1})^{ip^r}\phi(P^\sigma) \\
  &=(1+ \zeta^{p^r} + \zeta^{2p^r} + \cdots + \zeta^{p^r(j -1)})^{i}\phi(P^\sigma)\\
 &= j^i \phi(P^\sigma).
 \end{align*}
 Since, by definition, $j\pmod p = \chi_p(\sigma)$, we see that $(\phi(P))^\sigma=\chi_p(\sigma)^i \phi(P^\sigma)$, as claimed.
\end{proof}

\subsection{\texorpdfstring{$\zeta$}{zeta}-linear isogenies}\label{sect: zeta-linear-isogenies} We keep the notation $n = p^m$ and $\zeta = \zeta_n$. 

\begin{definition}
Let $(A,\iota_A)$ and $(B, \iota_B)$ be abelian varieties over $F$ with $\zeta$-multiplication, and let $\phi \colon A \to B$ be an isogeny. We say that $\phi$ is \emph{$\zeta$-linear} if $\iota_B(\alpha)\circ \phi = \phi\circ \iota_A(\alpha)$ for all $\alpha \in \Z[\zeta]$.
\end{definition}

\begin{lemma}\label{lem:zeta-linear}
If $\phi \colon A \to B$ is a $\zeta$-linear $p$-isogeny, then $A[\phi] \subset A[\pi]$. Conversely, if $H \subset A[\pi]$ is a $G_F$-stable subgroup of order $p$, then the quotient $B:= A/H$ inherits a $\zeta$-multiplication from $A$, and the canonical $p$-isogeny $\phi \colon A \to B$ is $\zeta$-linear. 
\end{lemma}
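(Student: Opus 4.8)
The plan is to prove the two implications separately, both times using that $A[\pi]$ is, by construction, exactly the subgroup of $A_{\bar F}[p]$ on which $\zeta$ acts trivially: indeed $(1-\zeta)Q=0$ is the same as $\iota_A(\zeta)Q=Q$.

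\emph{Forward implication.} Suppose $\phi\colon A\to B$ is a $\zeta$-linear isogeny of degree $p$. Then $A[\phi]$ is an $\F_p$-line, and for $P\in A[\phi]$ the relation $\phi\circ\iota_A(\zeta)=\iota_B(\zeta)\circ\phi$ gives $\phi(\iota_A(\zeta)P)=\iota_B(\zeta)\phi(P)=0$, so $\iota_A(\zeta)$ stabilizes this line and acts on it as a scalar $\lambda$. Since $\iota_A(\zeta)$ is invertible we have $\lambda\in\F_p^\times$, and $\lambda^{p^m}=1$; as $\gcd(p^m,\,p-1)=1$ this forces $\lambda=1$. Hence $1-\zeta$ annihilates $A[\phi]$, i.e.\ $A[\phi]\subseteq A_{\bar F}[1-\zeta]$, and since $A[\phi]$ is defined over $F$ this descends to $A[\phi]\subseteq A[\pi]$.

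\emph{Converse.} Let $H\subseteq A[\pi]$ be a $G_F$-stable subgroup of order $p$, set $B:=A/H$, and let $\phi\colon A\to B$ be the quotient isogeny, of degree $p$. Since $\iota_A(\zeta)$ acts trivially on $A_{\bar F}[1-\zeta]\supseteq H$, every $\iota_A(\alpha)$ with $\alpha\in\Z[\zeta]$ acts on $H$ as multiplication by a scalar in $\F_p$, in particular maps $H$ into itself; hence $\phi\circ\iota_A(\alpha)$ kills $H$ and factors uniquely as $\iota_B(\alpha)\circ\phi$ for a well-defined $\iota_B(\alpha)\in\End_{\bar F}B$, and $\alpha\mapsto\iota_B(\alpha)$ is visibly a ring homomorphism. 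For injectivity: given $\alpha\neq0$ there is $\beta\in\Z[\zeta]$ with $\alpha\beta=N_{\Q(\zeta)/\Q}(\alpha)\in\Z\setminus\{0\}$, so $\iota_A(\alpha)\circ\iota_A(\beta)$ is a nonzero multiplication map and $\iota_A(\alpha)$ is a surjective isogeny; then $\iota_B(\alpha)(B)=\iota_B(\alpha)\phi(A)=\phi(\iota_A(\alpha)(A))=\phi(A)=B$, so $\iota_B(\alpha)\neq0$. For $G_F$-equivariance: applying $\sigma\in G_F$ to $\iota_B(\alpha)\circ\phi=\phi\circ\iota_A(\alpha)$ and using that $\phi$ is $F$-rational (so $\phi^\sigma=\phi$) and that $\iota_A$ is $G_F$-equivariant gives $\iota_B(\alpha)^\sigma\circ\phi=\phi\circ\iota_A(\alpha^\sigma)=\iota_B(\alpha^\sigma)\circ\phi$, and right-cancellation of the epimorphism $\phi$ yields $\iota_B(\alpha)^\sigma=\iota_B(\alpha^\sigma)$. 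Finally, $\zeta$-linearity of $\phi$ is precisely the defining relation $\iota_B(\alpha)\circ\phi=\phi\circ\iota_A(\alpha)$.

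Both directions are essentially formal once the two elementary inputs are isolated — that $\zeta$ acts as $1$ on any $\F_p$-line it stabilizes, because $\mathrm{ord}(\zeta)=p^m$ is prime to $\#\F_p^\times$, and that $\iota_A(\alpha)$ is a surjective isogeny for $\alpha\neq0$. The only thing to watch is the descent bookkeeping: one must keep careful track of which objects are $F$-rational ($\phi$, $H$, $A[\pi]$, $A[\phi]$) versus merely geometric (the $\zeta$-multiplication on $A$), and invoke the $G_F$-equivariance of $\iota_A$ at exactly the right moment. I do not anticipate a real obstacle here.
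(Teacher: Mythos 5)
Your proof is correct and follows essentially the same route as the paper: the forward direction via the observation that $\zeta$ stabilizes the order-$p$ group $A[\phi]$ and must act trivially since $\gcd(p^m,p-1)=1$, and the converse by factoring $\phi\circ\iota_A(\alpha)$ through the quotient $\phi$. The only cosmetic difference is that you verify injectivity of $\iota_B$ via the norm/surjectivity argument and spell out the equivariance and ring-homomorphism checks, where the paper instead notes that $\zeta_B$ has the same minimal polynomial as $\iota_A(\zeta)$ and leaves those routine verifications implicit.
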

\begin{proof}
Since $\phi$ is $\zeta$-linear, if $P\in A[\phi]$, then so is $\zeta P$. Hence, the action of $\zeta$ is given by a homomorphism $\mu_n = \mu_{p^m} \to \Aut_{\bar F} A[\phi] \simeq (\Z/p\Z)^\times$, which must of course be trivial. Thus, $\zeta$ acts as the identity on $A[\phi]$, so $A[\phi]\sub A[1-\zeta] = A[\pi]$.

For the converse, since $\iota_A(\zeta)$ fixes $H$, we have $\ker(\phi) = \ker(\phi \circ \iota_A(\zeta))$, so that $\phi\circ \iota_A(\zeta)$ factors through $\phi$. That is, there exists an automorphism $\zeta_B\: B\to B$ such that $\phi\circ \iota_A(\zeta) = \zeta_B\circ\phi$. This automorphism $\zeta_B$ has order $n$ and has the same minimal polynomial as $\iota_A(\zeta)$. Thus, the map $\iota_B\:\Z[\zeta]\to \End_{\bar F}B$ given by $\zeta\mapsto\zeta_B$ is a $\zeta$-multiplication on $B$, and the $p$-isogeny $\phi$ is $\zeta$-linear by construction.
\end{proof}

\subsection{Twists}\label{subsec:twists}

If $(A, \iota_A)$ has $\zeta_n$-multiplication, then $\iota_A$ induces an inclusion $\Z[\zeta_{2n}]\t \subset \Aut_{\bar F}A$ of $G_F$-modules. For each $d \in F^\times$, let $A_d$ be the twist of $A$ corresponding to the image of $d$ under 
\[F^\times \to F^\times/F^{\times2n} \simeq H^1(F, \mu_{2n}) \to H^1(F, \Aut_{\bar F}A).\] 
Then $A_d$ is an abelian variety over $F$ that becomes isomorphic to $A$ over $F(d^{1/2n})$. Moreover, $A_d$ also has $\zeta_n$-multiplication. 

\begin{remark}
 If $\Aut_{\bar F}A = \mu_{2n} $, then distinct $2n$-th-power classes $d$ give non-isomorphic $A_d$. However, if $\Aut_{\bar F}A \supsetneq \mu_{2n}$, then the map $H^1(F, \mu_{2n}) \to H^1(F, \Aut_{\bar F}A)$ need not be injective, and hence the twists $A_d$ need not be distinct. 
\end{remark}

Now let $\phi \colon A \to B$ be a $\zeta$-linear $p$-isogeny over $F$. By \Cref{lem:zeta-linear}, the automorphisms $\pm\zeta \in \Aut_{\bar F}A$ preserve the subgroup $A[\phi]$, giving an inclusion of $G_F$-modules $\mu_{2n} \hookrightarrow \Aut_{\bar F}( \phi)$, where $\Aut_{\bar F}(\phi)$ is the subgroup of $\Aut_{\bar F}A$ stabilizing $A[\phi]$. For $d \in F^\times$, let $\phi_d \colon A_d \to B_d$ be the twist of $\phi$ corresponding to the image of $d$ under 
\[F^\times \to F^\times/F^{\times2n} \simeq H^1(F, \mu_{2n}) \to H^1(F, \Aut_{\bar F}(\phi)).\] 
Then $\phi_d$ is a $\zeta$-linear $p$-isogeny over $F$. Similarly, we may twist the isogeny $\pi \colon A \to A^{(1)}$ to obtain $\pi_d \colon A_d \to A^{(1)}_d$, and this is the canonical isogeny ``$\pi$'' associated to $A_d$ (and its $\zeta$-multiplication). 

\begin{remark}\label{rem:zeta-twist-vs-d-twist}
	By \Cref{lem:zeta-twist}, the abelian variety $\Aone$ is the twist of $A$ corresponding to the cocycle $\xi\:\sigma\mapsto \frac{1-\zeta^\sigma}{1-\zeta}$ in $H^1(F, \Z[\zeta_n]^\times)$. When $n = 3$, we have $\Z[\zeta_{2n}]^\times = \mu_6$, and since $\frac{1-\zeta}{\sqrt[6]{-27}} \in \mu_6$, the cocycle $\xi$ is in the same cohomology class as $-27\in F^{\times}/F^{\times6}$. It follows that $\Aone = A_{-27}$, which is the quadratic twist of $A$ by the mod $3$ cyclotomic character. More generally, we have $A^{(\frac12\varphi(n))} = A_{(p^*)^n}$, where $p^*=(-1)^{(p-1)/2}p$ is such that $\Q(\sqrt{p^*})\sub \Q(\zeta_p)$. 
	However, for general $s$ and $n$, the twist $\As$ need not be isomorphic to $A_d$, for any $d \in F^\times/F^{\times2n}$.
\end{remark}

\section{Integral orbits of binary cubic forms}\label{sect: integral}
In this section, we classify the integral $\SL_2(F)$-orbits on the space of binary cubic forms over a local field $F$. We first recall some facts from \cite[\S 2]{elk} and \cite[Thm.\ 13]{HCL1}.

Let $V = \Sym^3\Z^2$ be the space of binary cubic forms. The group $\SL_2$ acts on $V$, and the ring of invariant functions is generated by the usual polynomial discriminant $\Disc \colon V \to \Z$. Let $F$ be any field of characteristic not 2 or 3, and for any $d \neq 0$ in $F$, define $V(F)_d := \{f \in V(F) \colon \Disc(f) = d\}$. There is a unique \emph{reducible} $\SL_2(F)$-orbit of cubic forms $f \in V(F)_d$. The stabilizer of such an $f$ is a commutative $F$-group scheme $C_d$ of order $3$. The Galois action on $C_d(\overline F)$ is by the quadratic character $\chi_d \colon \Gal(F(\sqrt d)/F) \to \{\pm 1\}$. 
\begin{proposition}\label{prop:AIT}
The group $H^1(F, C_d)$ is in bijection with the $\SL_2(F)$-orbits on $V(F)_d$.
\end{proposition}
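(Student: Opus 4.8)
The plan is to set up a twisting argument identifying $\SL_2(F)$-orbits on $V(F)_d$ with a cohomology group. First I would recall the content of \cite[\S 3]{bkls}: over $\overline F$ (or more precisely over $F$), all binary cubic forms of a given nonzero discriminant $d$ lie in a single $\SL_2(\overline F)$-orbit, because $\SL_2$ acts on $V \setminus \{\Disc = 0\}$ with geometrically connected, in fact transitive on each discriminant fiber, and the stabilizer of any such form is a finite group scheme of order $3$. Indeed, fixing the reducible form $f_0 = x^2 y$ (or the appropriate model) of discriminant $d$, its stabilizer in $\SL_2$ is the order-$3$ group scheme $C_d$ whose $\overline F$-points carry the Galois action through the quadratic character $\chi_d$ attached to $F(\sqrt d)/F$; this is exactly the group scheme denoted $C_d$ in the statement.

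The main step is then a standard application of the theory of twisted forms / Galois descent. Since $\SL_2(\overline F)$ acts transitively on $V(\overline F)_d$ with stabilizer $C_d(\overline F)$, the pointed set of $\SL_2(F)$-orbits on $V(F)_d$ is the kernel of the natural map $H^1(F, C_d) \to H^1(F, \SL_2)$. By Hilbert 90 (the generalization: $H^1(F, \SL_n) = 1$ for any field $F$, $\SL_2$ being a special group), the target is trivial, so the map $H^1(F, C_d) \to \{\SL_2(F)\text{-orbits on } V(F)_d\}$ is a bijection of pointed sets. Concretely: given a form $f \in V(F)_d$, choose $g \in \SL_2(\overline F)$ with $g \cdot f_0 = f$; then $\sigma \mapsto g^{-1}\, {}^\sigma g$ is a $1$-cocycle valued in $C_d$, well-defined up to coboundary, giving the inverse map. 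One checks that two forms are in the same $\SL_2(F)$-orbit iff the associated cocycles are cohomologous, which is precisely the vanishing-in-$H^1(F,\SL_2)$ computation, and that the class of $f_0$ maps to the trivial orbit.

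I would organize the write-up as: (1) recall from \cite{bkls} the transitivity of $\SL_2(\overline F)$ on $V(\overline F)_d$ and the identification of the stabilizer with $C_d$; (2) invoke $H^1(F, \SL_2) = 1$; (3) conclude via the exact sequence of pointed sets $\SL_2(F) \to V(F)_d \to H^1(F, C_d) \to H^1(F, \SL_2)$ associated to the orbit map. The only mild subtlety — and the step I'd expect to need the most care — is verifying that $\SL_2(\overline F)$ (as opposed to $\GL_2(\overline F)$) already acts transitively on each discriminant fiber $V(\overline F)_d$, i.e.\ that scaling can be absorbed into $\SL_2$; this is where one uses that cubic forms have odd degree so $\GL_2$ acts with the scalars $\lambda$ rescaling the discriminant by $\lambda^6$ in a way compatible with trimming down to determinant $1$, together with the fact that $\overline F$ is algebraically closed. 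Since the excerpt says "we first recall some facts from \cite[\S 3]{bkls}," I would simply cite these facts rather than reprove them, making the proof of \Cref{prop:AIT} essentially a one-line deduction from Hilbert 90 once the geometric setup is in place.
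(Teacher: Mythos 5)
Your proposal is correct and is essentially the standard argument underlying this statement: the paper itself gives no proof, recalling the proposition from \cite[\S 3]{bkls}, and the proof there is exactly your twisting argument — $\SL_2(\bar F)$ acts transitively on $V(\bar F)_d$ with stabilizer (of the rational reducible form) the order-$3$ \'etale group scheme $C_d$, so the orbit map gives a bijection of $\SL_2(F)$-orbits with $\ker\bigl(H^1(F,C_d)\to H^1(F,\SL_2)\bigr)$, which is all of $H^1(F,C_d)$ since $H^1(F,\SL_2)=1$. The only detail to fix is the base point: $x^2y$ has discriminant $0$, so one must take a reducible form of discriminant $d$ (e.g.\ the form $\tfrac d4 x^3+xy^2$ appearing later in the paper), which your ``or the appropriate model'' hedge already accommodates.
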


Now let $F$ be a local field of residue characteristic neither $2$ nor $3$, with surjective discrete valuation $v \colon F^\times \to \Z$, ring of integers $\O_F$, maximal ideal $\m$ and residue field $\F$ of cardinality $q$. 

We wish to determine which $\SL_2(F)$-orbits in $V(F)_d$ have representatives in $V(\O_F)_d$. We call these orbits the \emph{integral orbits}, and we let $H^1_\mathrm{int}(C_d)$ be the subset of $H^1(F,C_d)$ that they correspond to under the bijection of \Cref{prop:AIT}. Of course, a necessary condition for there to be any integral orbits at all is that $d \in \O_F$. We see that even though the abstract group $H^1(F,C_d)$ depends only on the square-class of $d$, the notion of integrality depends on the actual value of $d$, and in particular its valuation. 

We recall some facts about cubic rings over $F$ and over $\O_F$ \cite{BST}. The action of $\SL_2$ on $V$ extends to the following action of $\GL_2$: if $\gamma = (\begin{smallmatrix} a & b\\c& d\end{smallmatrix})$, then 
\[ (\gamma\cdot f)(x,y) = \frac{1}{\det \gamma} f(ax+cy,bx+dy).\] 
If $R$ is a principal ideal domain, then a \emph{cubic ring over $R$} is an $R$-algebra $S$ that is free of rank $3$ as an $R$-module. The discriminant of $S$ is a well-defined element of $R^\times/R^{\times 2}$.

\begin{proposition}[Levi, Delone--Faddeev, Gan--Gross--Savin]\label{prop:DeloneFaddeev}
 For any principal ideal domain $R$, there is a discriminant preserving bijection between $\GL_2(R)$-orbits on $V(R)$ and isomorphism classes of cubic rings over $R$. Moreover, this bijection is functorial in $R$.
\end{proposition}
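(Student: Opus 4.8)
The plan is to construct the correspondence explicitly in both directions, check that the two constructions are mutually inverse, and observe that everything is given by polynomial formulas, whence functoriality. Concretely, given a binary cubic form $f(x,y) = ax^3 + bx^2y + cxy^2 + dy^3 \in V(R)$, define $R_f$ to be the free rank-$3$ $R$-module with basis $1, \omega, \theta$ and multiplication fixed by
\[
\omega\theta = -ad, \qquad \omega^2 = -ac + b\omega - a\theta, \qquad \theta^2 = -bd + d\omega - c\theta,
\]
extended $R$-bilinearly with $1$ the identity. The first task is to verify that $R_f$ is a commutative associative $R$-algebra: commutativity is built in, and associativity for an algebra generated over $R$ by $\omega,\theta$ reduces to the two identities $(\omega\cdot\omega)\cdot\theta = \omega\cdot(\omega\cdot\theta)$ and $(\omega\cdot\theta)\cdot\theta = \omega\cdot(\theta\cdot\theta)$, which are a direct (if tedious) computation with the table above. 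One then checks that $\Disc(R_f)$, defined as the determinant of the trace pairing on $1,\omega,\theta$, equals $\Disc(f)$: the relevant traces are $\mathrm{tr}(1)=3$, $\mathrm{tr}(\omega)=b$, $\mathrm{tr}(\theta)=-c$, and expanding the Gram determinant yields $b^2c^2 - 4ac^3 - 4b^3d - 27a^2d^2 + 18abcd = \Disc(f)$.

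Next I would show the assignment $f \mapsto R_f$ descends to $\GL_2(R)$-orbits. Recall $\GL_2(R)$ acts by $(\gamma\cdot f)(x,y) = \frac{1}{\det\gamma}f(ax+cy, bx+dy)$. It suffices to exhibit an isomorphism $R_{\gamma\cdot f}\cong R_f$ for $\gamma$ ranging over a generating set of $\GL_2(R)$: the scalar matrices, the diagonal matrices, and the two unipotents $\bigl(\begin{smallmatrix}1 & r\\0 & 1\end{smallmatrix}\bigr)$, $\bigl(\begin{smallmatrix}1 & 0\\ r & 1\end{smallmatrix}\bigr)$. In each case the isomorphism is given by an explicit change of the pair $(\omega,\theta)$ — for the unipotents, replacing $\omega$ or $\theta$ by an $R$-combination of the two — and one tracks how the structure constants transform; the action on $V(R)$ was designed precisely so that this matches base change of the rank-$2$ module $C/R\cdot 1$ together with rescaling by $\det\gamma$, so the checks go through.

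For the inverse, let $C$ be a cubic ring over the PID $R$. Then $C/R\cdot 1$ is projective of rank $2$, hence free (this is the only place the PID hypothesis is used); choose a basis and lift it to $\omega_0,\theta_0 \in C$. Writing $\omega_0\theta_0 = n_0 + n_1\omega_0 + n_2\theta_0$, the translated pair $\omega := \omega_0 - n_2$, $\theta := \theta_0 - n_1$ is the unique translate with $\omega\theta \in R\cdot 1$; expanding $\omega^2,\theta^2,\omega\theta$ in the basis $1,\omega,\theta$, commutativity together with associativity in $C$ forces the structure constants to take exactly the shape above for a unique $(a,b,c,d) \in R^4$, defining a binary cubic form $f_C \in V(R)$. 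Since the bases of $C/R\cdot 1$ form a $\GL_2(R)$-torsor and the translation freedom has been removed by the normalization, the class $[f_C] \in V(R)/\GL_2(R)$ depends only on the isomorphism class of $C$. One then checks the two constructions are mutually inverse: the normalized basis exhibits a canonical isomorphism $R_{f_C}\cong C$, while the basis $1,\omega,\theta$ of $R_f$ is already normalized and recovers $f$ on the nose, so $[f_{R_f}] = [f]$. Finally, functoriality in $R$ is immediate: $f\mapsto R_f$ is given by polynomial formulas in the coefficients, and $C\mapsto f_C$ involves only the $R$-linear normalization $\omega_0\mapsto \omega_0 - n_2$, $\theta_0 \mapsto \theta_0 - n_1$, so both constructions commute with any base change $R\to R'$ of PIDs.

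The main obstacle is not conceptual but bookkeeping: the associativity verification for $R_f$ is the computational heart of the statement, and in the reverse direction one must be careful that the normalization is genuinely unique and that the resulting form is independent of all choices, so that $C\mapsto[f_C]$ is well-defined and honestly inverse to $f\mapsto R_f$. (Alternatively, one may simply cite the treatment in \cite{BST}, where this is carried out in exactly the PID generality we need.)
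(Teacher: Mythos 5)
Your construction is the classical Delone--Faddeev/Gan--Gross--Savin one, and most of the steps you outline (the multiplication table, associativity, the trace-form discriminant computation, the uniqueness of the normalized basis, and base change) do go through. The genuine gap is in the equivariance step: you reduce the claim that $f \mapsto R_f$ descends to $\GL_2(R)$-orbits to checking it on scalar matrices, diagonal matrices, and the two elementary unipotents, asserting that these generate $\GL_2(R)$. Over an arbitrary PID this is false: the subgroup they generate is Cohn's $\mathrm{GE}_2(R)$, and Cohn showed that for non-Euclidean PIDs such as $R = \Z[(1+\sqrt{-19})/2]$ one has $\mathrm{GE}_2(R) \subsetneq \GL_2(R)$. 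Since the proposition is asserted for every PID, and since the same transformation law is what you implicitly invoke when you say "the bases of $C/R\cdot 1$ form a $\GL_2(R)$-torsor" (so both the well-definedness of $C \mapsto [f_C]$ and the injectivity of $f \mapsto R_f$ on orbits rest on it), the argument as written does not cover all of $\GL_2(R)$.

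The fix is easy but requires abandoning the generator reduction. Either verify the transformation law for an arbitrary $2\times 2$ matrix directly --- it is a polynomial identity in the four entries, $\det^{-1}$, and the coefficients of $f$, so it can be checked once universally over $\Z[a,b,c,d,x_{11},x_{12},x_{21},x_{22}][\det^{-1}]$ --- or argue as the paper does, following \cite{BST}: the form attached to a cubic ring $S$ is characterized coordinate-freely as any binary cubic form representing the cubic map $S/R \to \wedge^2_R(S/R)$, $s \mapsto s \wedge s^2$. Since $\wedge^2$ of the standard representation of $\GL_2$ is the determinant, equivariance with exactly the $\tfrac{1}{\det}$-twist under an \emph{arbitrary} change of basis is immediate, and functoriality in $R$ comes for free, which is why the paper's proof is one line. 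A smaller imprecision: the freeness of $C/R\cdot 1$ is not just "projective over a PID, hence free"; what you need is that the quotient is torsion-free, which uses that $R$ is integrally closed (if $rx = s\cdot 1$ with $r \neq 0$, then $s/r$ is integral over $R$, hence lies in $R$, forcing $x \in R\cdot 1$). This is standard, but it is the actual content of that step and deserves a sentence.
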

\begin{proof}
Building on \cites{levi, delone-faddeev, gan-gross-savin}, it is shown in \cite{BST} that the bijection sends a cubic $R$-ring $S$ to any binary cubic form representing the cubic map $S/R \to \wedge^2_R (S/R)$, $s \mapsto s \wedge s^2$, which is functorial in $R$. 
\end{proof}

If $\gamma \in \GL_2(F)$ and $f \in V(F)$, then $\Disc(\gamma f) = \det(\gamma)^2\Disc(f).$ It follows that isomorphism classes of cubic $F$-algebras $L$ of discriminant $d$ are in bijection with $\GL_2(F)_{\pm1}$-orbits on $V(F)_d$. Here, $\GL_2(F)_{\pm1}$ is the subgroup of $\GL_2(F)$ consisting of elements with determinant $\pm 1$. 
Since $\SL_2(F)$ has index two in $\GL_2(F)_{\pm1}$, the $\GL_2(F)_{\pm1}$-orbits break up into at most two $\SL_2(F)$-orbits. It is a fun exercise to show that there are exactly two orbits if and only if $L$ is a field; the orbits are represented by $f(x,y)$ and $f(y,x)$.

\begin{remark}\label{rem:trivial-integral}
    The trivial class in $H^1(F, C_d)$ corresponds to the unique orbit of \emph{reducible} forms of discriminant $d$. Hence, $\alpha \in H^1(F, C_d)$ is non-trivial if and only if the corresponding cubic algebra $L$ is a field (if and only if $L$ is generated over $F$ by a root of $f(x,1)$). The trivial class corresponds to $F \times E_d$, where $E_d = F[x]/(x^2 - d)$ is the quadratic $F$-algebra of discriminant $d$.  Note that the trivial class is represented by $\frac d4x^3 + xy^2$, which is integral as long as $d$ is.
\end{remark}

From the functoriality in \Cref{prop:DeloneFaddeev} applied to the base change $\O_F \hookrightarrow F$, we deduce:
\begin{proposition}\label{prop:integrality criterion}
Let $\alpha \in H^1(F, C_d)$, and let $L$ be the corresponding cubic $F$-algebra. Then $\alpha$ is integral if and only if there is an $\O_F$-order $S \subset \O_L$ with $v(\Disc \, S) = v(d)$. 
\end{proposition}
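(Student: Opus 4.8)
The plan is to transport the integrality question across the Delone--Faddeev bijection of \Cref{prop:DeloneFaddeev}, using its functoriality in the base ring applied to the inclusion $\O_F\hookrightarrow F$, exactly as anticipated by the sentence preceding the statement. Throughout I use that $\alpha$ corresponds, via \Cref{prop:AIT}, to an $\SL_2(F)$-orbit $\mathcal O\subset V(F)_d$ lying inside the $\GL_2(F)_{\pm1}$-orbit attached to $L$, and that $\alpha$ is integral precisely when $\mathcal O\cap V(\O_F)_d\neq\emptyset$; and that, $L$ being \'etale over $F$, the $\O_F$-orders in $L$ are exactly the $\O_F$-subalgebras of $L$ that are free of rank $3$, each of which is automatically contained in the maximal order $\O_L$, being module-finite hence integral over $\O_F$.

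For the forward implication I would take $f\in\mathcal O\cap V(\O_F)$, so $\Disc f=d$, and feed it through \Cref{prop:DeloneFaddeev} over the principal ideal domain $\O_F$ to get a cubic $\O_F$-ring $S$ with $\Disc S=\Disc f=d$. Since $S$ is free of rank $3$ it is torsion-free, hence embeds in $S\otimes_{\O_F}F$, which by functoriality of the bijection is the cubic $F$-algebra attached to $f$ over $F$, namely $L$. So $S$ is an $\O_F$-order in $L$, hence $S\subset\O_L$, and $v(\Disc S)=v(d)$ --- here even with equality of the discriminants themselves.

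The converse is where the content lies. Given an $\O_F$-order $S\subset\O_L$ with $v(\Disc S)=v(d)$, I would apply \Cref{prop:DeloneFaddeev} over $\O_F$ to get $g\in V(\O_F)$ with $\Disc g=\Disc S$; by functoriality, $g$ viewed over $F$ lies in the $\GL_2(F)$-orbit corresponding to $S\otimes_{\O_F}F=L$ --- the same orbit that contains $\mathcal O$. Since every element of $\mathcal O$ has discriminant exactly $d$ and $\GL_2(F)$ scales discriminants by squares, $\Disc g$ and $d$ differ by an element of $F^{\times2}$; as also $v(\Disc g)=v(d)$, over the discretely valued field $F$ we must have $d/\Disc g=s^2$ for some $s\in\O_F^\times$. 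Now acting by $\gamma=(\begin{smallmatrix}s&0\\0&1\end{smallmatrix})\in\GL_2(\O_F)$: by the displayed formula for the $\GL_2$-action, $\gamma$ preserves $V(\O_F)$ and scales discriminants by $\det(\gamma)^2=s^2$, so $\gamma\cdot g\in V(\O_F)$ has discriminant exactly $d$ and still lies in the $\GL_2(F)_{\pm1}$-orbit of $\mathcal O$. Finally, that $\GL_2(F)_{\pm1}$-orbit is either $\mathcal O$ itself or splits into the two $\SL_2(F)$-orbits swapped by the variable-reversal $h(x,y)\mapsto h(y,x)$; as reversal preserves $V(\O_F)$, either way $\mathcal O\cap V(\O_F)_d\neq\emptyset$, so $\alpha$ is integral.

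The one genuine obstacle is internal to the converse. Producing an integral cubic ring with the correct generic fibre is immediate from \Cref{prop:DeloneFaddeev}; the work is in the two bookkeeping points that (i) the model must be rescaled so that its discriminant is \emph{exactly} $d$ and not merely $d$ times a unit, and (ii) the resulting $\SL_2(F)$-orbit must be $\mathcal O$ rather than its variable-reversal. Point (i) is resolved by the observation that over a discretely valued field two elements sharing a square class and a valuation differ by the square of a unit, together with the explicit diagonal transformation above; point (ii) is resolved by noting that integrality is a property of the ambient $\GL_2(F)_{\pm1}$-orbit, since swapping $x$ and $y$ is an integral operation of determinant $-1$. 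This is also precisely where the local hypothesis on $F$ is used.
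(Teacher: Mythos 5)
Your proof is correct and follows essentially the same route as the paper, which deduces the proposition directly from the functoriality of the Delone--Faddeev bijection (\Cref{prop:DeloneFaddeev}) under $\O_F\hookrightarrow F$ and leaves the details unwritten. Your two bookkeeping steps --- rescaling by a unit square via a diagonal matrix to make the discriminant exactly $d$, and noting that integrality is a property of the $\GL_2(F)_{\pm1}$-orbit because the variable swap is integral of determinant $-1$ --- are precisely the details being suppressed, and you fill them in correctly.
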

\begin{proof}
    The ``only if'' direction is clear. For the ``if'' direction, observe that any $\O_F$-order $S \subset \O_L$ has discriminant congruent to $d$ modulo $F^{\times 2}$. So if $v(\Disc\, S) = v(d)$, then we see that $\Disc(S)$ is congruent to $d$ modulo $\O_F^{\times 2}$. Thus we may choose bases so that $S$ corresponds to a binary cubic form with coefficients in $\O_F$ of exact discriminant $d$.  
\end{proof}

The following two facts about cubic orders will be useful \cite[Props.\ 15-16]{BST}.
\begin{proposition}\label{splitting}
Let $L$ be an \'etale cubic $F$-algebra. Suppose $f(x,y)$ corresponds to the maximal order $\O_L$ under the bijection of Proposition $\ref{prop:DeloneFaddeev}$. Then the factorization type of $f(x,y)$ over the residue field $\F$ is the factorization type of the maximal ideal $\m$ of $\O_F$ in the ring $\O_L$. 	
\end{proposition}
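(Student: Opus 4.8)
The plan is to reduce the statement to an algebraic comparison by means of the functoriality in \Cref{prop:DeloneFaddeev}, applied to the reduction map $\O_F \to \F$ (both rings are principal ideal domains). Under this base change the $\GL_2(\O_F)$-orbit of $f$ goes to the $\GL_2(\F)$-orbit of $\bar f := f \bmod \m$, and the cubic ring $\O_L$ goes to $\bar L := \O_L \otimes_{\O_F} \F = \O_L/\m\O_L$; so $\bar f$ corresponds to $\bar L$. Since the factorization type of a binary cubic form is plainly a $\GL_2(\F)$-orbit invariant (a linear change of variables permutes the irreducible factors, preserving their degrees and multiplicities), it suffices to prove two things: (i) the isomorphism type of the Artinian $\F$-algebra $\bar L$ records the splitting type of $\m$ in $\O_L$, and (ii) it also records the factorization type of $\bar f$.

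For step (i) I would observe that, because $L$ is \'etale over the local field $F$, the maximal order decomposes as $\O_L = \prod_i \O_{L_i}$, a finite product of complete discrete valuation rings, and the splitting type of $\m$ in $\O_L$ is by definition the multiset of pairs $(e_i, f_i)$, where $e_i$ and $f_i$ are the ramification index and residue degree of $L_i/F$, so that $\sum_i e_i f_i = 3$. Reducing modulo $\m$ gives $\bar L \cong \prod_i \O_{L_i}/\m\O_{L_i}$; since the residue field $\F$ is finite, each residue extension is separable, so Hensel's lemma furnishes a coefficient field in each factor and identifies $\O_{L_i}/\m\O_{L_i} \cong \F_{q^{f_i}}[t]/(t^{e_i})$. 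In particular $\bar L$ is a product of quotients of discrete valuation rings, hence is not the algebra $\F[u,v]/(u,v)^2$ attached to the zero form; thus $\bar f \neq 0$, and it makes sense to speak of its factorization type.

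For step (ii) I would invoke the standard description (following \cite{BST}) of how the Delone--Faddeev correspondence over a field sees factorization type. If $h$ is a nonzero binary cubic form over a field $k$ and $h = c\prod_j h_j^{m_j}$ with the $h_j$ pairwise non-proportional irreducible forms of degree $d_j$ (so $\sum_j m_j d_j = 3$), then the associated cubic $k$-algebra $R_h$ is isomorphic to $\prod_j k_j[t]/(t^{m_j})$, where $k_j/k$ is the extension of degree $d_j$. One sees this by base-changing to $\bar k$: since $h_{\bar k}$ splits into linear forms, $R_h \otimes_k \bar k$ is isomorphic to $\prod_r \bar k[t]/(t^{m_r})$, one factor for each root $r$ with $m_r$ its multiplicity; descending along $\Gal(\bar k/k)$ — whose action on these factors mirrors its action on the roots — and using separability of the residue extensions (automatic since $\F$ is finite), the factor attached to a Galois orbit of $d_j$ roots each of multiplicity $m_j$ is the split algebra $\F_{q^{d_j}}[t]/(t^{m_j})$, and not merely some non-split local $k$-algebra with the same residue field. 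Taking $h = \bar f$, the multiset of pairs $(d_j, m_j)$ recording the factorization type of $\bar f$ equals the multiset of pairs (residue degree, length) of the local factors of $\bar L$.

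It then remains to combine (i) and (ii). A finite-dimensional commutative $\F$-algebra decomposes uniquely as a product of local Artinian $\F$-algebras, and each such factor determines its own residue field and its dimension over $\F$. Matching the two computations of $\bar L$ — namely $\prod_i \F_{q^{f_i}}[t]/(t^{e_i})$ from the ring side and $\prod_j \F_{q^{d_j}}[t]/(t^{m_j})$ from the form side — therefore forces the equality of multisets $\{(e_i, f_i)\}_i = \{(m_j, d_j)\}_j$, which is precisely the assertion that the factorization type of $\bar f = f \bmod \m$ over $\F$ coincides with the splitting type of $\m$ in $\O_L$. The one genuinely delicate point is step (ii): since $\bar f$ need not be separable one must keep track of multiplicities rather than just roots, and since its $x^3$-coefficient may vanish one cannot simply pass to a monic cubic over $\F$ — hence the base-change-and-descend maneuver, whose only subtle input is the separability noted above. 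Everything else is bookkeeping with the identity $\sum e_i f_i = 3$ and the uniqueness of the Artinian decomposition.
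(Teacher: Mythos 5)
Your argument is correct. Note that the paper itself gives no proof of \Cref{splitting}: it simply defers to \cite{BST} (``see \cite{BST} for proofs''), so there is no in-paper argument to compare against; what you have written is a self-contained proof of the citation. Your route --- apply the functoriality in \Cref{prop:DeloneFaddeev} to the base change $\O_F \to \F$, so that $\bar f$ corresponds to $\O_L/\m\O_L$; identify $\O_L/\m\O_L \cong \prod_i \F_{q^{f_i}}[t]/(t^{e_i})$ using completeness and the perfectness of the finite residue field (coefficient fields via Hensel); and then match this against the field-level description of the correspondence --- is the standard one, and the auxiliary observations you make (that factorization type is a $\GL_2(\F)$-invariant, and that $\bar f \neq 0$ because $\O_L/\m\O_L$ is never $\F[u,v]/(u,v)^2$) are exactly the points needed for the statement to make sense. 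The one input you quote rather than prove is the field-level fact that a nonzero cubic form $h = c\prod_j h_j^{m_j}$ over $\F$ corresponds to $\prod_j \F_{q^{d_j}}[t]/(t^{m_j})$. For binary cubics this is a short finite check: a repeated factor is necessarily linear with its repeated root $\F$-rational (the multiset of roots is Galois-stable and the repeated root is unique), so after a $\GL_2(\F)$-change of variables one reduces to $h = x^2\ell(x,y)$ or $h = x^3$ and reads off the algebra from the explicit multiplication table ($k \times k[t]/(t^2)$, resp.\ $k[t]/(t^3)$), while the separable case is the usual \'etale dictionary; alternatively your base-change-to-$\overline{\F}$-and-descend argument works, and your appeal to separability (existence of a coefficient field) together with the principality of the maximal ideal, which is visible after base change, is the right way to rule out non-split forms. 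In short: the paper buys brevity by citing \cite{BST}; your proof buys a self-contained verification whose only ingredients are the functoriality already recorded in \Cref{prop:DeloneFaddeev} and elementary structure theory over the finite residue field.
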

\begin{proposition}\label{order}
Let $f(x,y) \in V(\O_F)_d$ correspond to a cubic ring $S$ over $\O_F$. Then the sub-$\O_F$-rings $S' \subset S$ of index $q$ correspond bijectively with the zeros of $f \pmod \m$ in $\P^1(\F)$. 
\end{proposition}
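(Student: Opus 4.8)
The plan is to reduce the statement to a purely linear question about the cubic $\F$-algebra $\bar S := S/\m S$ and the reduced form $\bar f := f \bmod \m \in V(\F)$, and to settle that question using the intrinsic description of the Delone--Faddeev correspondence recalled in the proof of \Cref{prop:DeloneFaddeev}. First I would show that sub-$\O_F$-rings $S' \subseteq S$ of index $q$ are in bijection with codimension-one unital $\F$-subalgebras $W \subseteq \bar S$. Indeed, if $[S:S'] = q = \#\F$, then $S/S'$ is an $\O_F$-module of cardinality $q$, hence isomorphic to $\O_F/\m$; in particular it is killed by $\m$, so $\m S \subseteq S'$ and $S' = \pi^{-1}(W)$, where $\pi \colon S \to \bar S$ is reduction mod $\m$ and $W := S'/\m S$ is a codimension-one $\F$-subspace of $\bar S$ which is a subring and contains $\bar 1$. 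Conversely, for any such $W$, the preimage $\pi^{-1}(W)$ is a sub-$\O_F$-ring of $S$ of index $q$, and the maps $S' \mapsto S'/\m S$ and $W \mapsto \pi^{-1}(W)$ are mutually inverse (using $\m S = \ker \pi \subseteq S'$). So it suffices to match codimension-one unital $\F$-subalgebras of $\bar S$ with the zeros of $\bar f$ in $\P^1(\F)$.

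For this last matching, by the functoriality in \Cref{prop:DeloneFaddeev} applied to $\O_F \twoheadrightarrow \F$, the form $\bar f$ corresponds to the cubic $\F$-algebra $\bar S$. Fixing a basis $\bar e_1, \bar e_2$ of $\bar S/\F$ realizing this correspondence, so that $\bar f(x,y)\,(\bar e_1 \wedge \bar e_2) = \bar s \wedge \bar s^2$ for $\bar s = x\bar e_1 + y\bar e_2$ — the map $\bar s \mapsto \bar s \wedge \bar s^2$ on $\bar S/\F$ being well-defined as in the proof of \Cref{prop:DeloneFaddeev} — one has, for $[x_0:y_0] \in \P^1(\F)$ and any lift $v \in \bar S$ of $v_0 := x_0\bar e_1 + y_0\bar e_2$, that $\bar f(x_0,y_0) = 0$ if and only if $v^2 \in \F\cdot 1 + \F v$. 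I would then send such a zero to $W := \F \cdot 1 + \F v$: this is independent of the choice of $v$ and of $v_0$, has codimension one in $\bar S$, contains $1$, and is closed under multiplication precisely because $v^2 \in W$. Conversely, a codimension-one unital $\F$-subalgebra $W$ equals $\F \cdot 1 \oplus \F v$ for any $v \in W \setminus \F \cdot 1$, and then $v^2 \in W$ forces $\overline{v^2}$ to be an $\F$-multiple of $\bar v$ in $\bar S/\F$, so $[\bar v] \in \P^1(\F)$ is a zero of $\bar f$. These two assignments are mutually inverse, which together with the first step proves the proposition.

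The only delicate point is the second step: one wants the dictionary between codimension-one subalgebras and zeros to hold uniformly over all isomorphism types of $\bar S$ (the étale types $\F \times \F \times \F$, $\F \times \F_{q^2}$, $\F_{q^3}$, together with $\F \times \F[\epsilon]/\epsilon^2$ and $\F[\epsilon]/\epsilon^3$), and to collapse repeated linear factors of $\bar f$ correctly, so that, for instance, a double root of $\bar f$ still yields a single point of $\P^1(\F)$ and hence a single subalgebra. Arguing with the intrinsic map $\bar s \mapsto \bar s \wedge \bar s^2$ rather than with explicit structure constants sidesteps this casework: the bijection above is manifestly independent of the type and returns exactly the set-theoretic zero locus. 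A secondary point to pin down is the convention that a ``sub-$\O_F$-ring'' is a unital subring containing $1_S$; without it one would also pick up the non-unital codimension-one subalgebras of $\bar S$, such as $\F \times \F \times 0 \subseteq \F \times \F \times \F$, which do not correspond to zeros of $\bar f$.
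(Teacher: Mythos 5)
Your proof is correct. Note that the paper does not actually prove this proposition---it defers both Proposition \ref{splitting} and Proposition \ref{order} to \cite{BST}---so your argument serves as a self-contained substitute, and it is exactly the standard one: index-$q$ sub-$\O_F$-rings are identified (via $\m S \subseteq S'$, forced since $S/S'\simeq \O_F/\m$) with codimension-one unital $\F$-subalgebras of $S/\m S$, and these in turn match the zeros of $f \bmod \m$ through the intrinsic description $s \mapsto s \wedge s^2$ of the Delone--Faddeev form together with its functoriality under $\O_F \to \F$. Your two cautionary remarks (the unital convention for subrings, and the fact that the correspondence is with the set-theoretic zero locus, so degenerate cases such as $\bar S \simeq \F[x,y]/(x,y)^2$ with $\bar f \equiv 0$ giving all $q+1$ points are handled uniformly) are precisely the points needed for the applications in Theorem \ref{thm:integralorbits}.
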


We also need the following result, which requires $\charr \F \neq 3$, and which describes the  subgroup $H^1_{\mathrm{un}}(F, C_d) \subset H^1(F, C_d)$ of unramified classes. 
\begin{proposition}\label{prop:unram}
Suppose $0 \neq \alpha \in H^1(F, C_d)$ corresponds to the cubic extension $L/F$. Then $\alpha \in H^1_\un(F,C_d)$ if and only if $L$ is unramified.
\end{proposition}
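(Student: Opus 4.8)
The plan is to combine the explicit description~$(\ref{eq:explicit})$ with the classical Kummer-type parametrization of cubic fields by their quadratic resolvent, and to detect ramification of $L$ through a discriminant formula. First I would dispose of the case $v(d)$ odd: then $K_d=F(\sqrt{-3d})$ is a ramified quadratic extension of $F$, and the norm map $K_d^\times/K_d^{\times3}\xrightarrow{\Nm}F^\times/F^{\times3}$ is an isomorphism --- it is the identity on the $\Z/3$ coming from valuations and is $x\mapsto x^2$ on the factor $\F^\times/\F^{\times3}$ coming from units, because the nontrivial automorphism of $K_d/F$ acts trivially on the residue field --- so $H^1(F,C_d)=0$ and there is nothing to prove. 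So I may assume $v(d)$ is even, and, after replacing $d$ by a square multiple (which alters neither $H^1(F,C_d)$, nor $H^1_\un(F,C_d)$, nor the cubic algebra attached to $\alpha$), that $d\in\O_F^\times$; then $K_d/F$ is unramified (possibly split) and $\O_{K_d}=\O_F[\omega]$ with $\omega=\sqrt{-3d}\in\O_{K_d}^\times$, since $\Disc(x^2+3d)=-12d\in\O_F^\times$.

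The technical heart is the parametrization. A nonzero class $\alpha=[\beta]\in H^1(F,C_d)$, with $\beta=a+b\omega\in K_d^\times$ and $\Nm_{K_d/F}(\beta)=c^3\in F^{\times3}$, corresponds to the cubic field $L_\beta=F[x]/(x^3-3cx-2a)$ --- the minimal polynomial of $\theta:=\beta^{1/3}+\bar\beta^{1/3}$, with cube roots chosen so that $\beta^{1/3}\bar\beta^{1/3}=c$, the $G_F$-action on the roots realizing the class $\alpha$ --- and
\[\Disc(x^3-3cx-2a)=108(c^3-a^2)=324\,b^2d,\]
since $c^3-a^2=\Nm\beta-a^2=3db^2$; moreover $\alpha=0$ exactly when $\beta\in F^\times K_d^{\times3}$. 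Granting this, both implications follow quickly. If $\alpha\in H^1_\un(F,C_d)$, choose $\beta\in\O_{K_d}^\times$ with $\Nm\beta=c^3\in\O_F^{\times3}$, so $c\in\O_F^\times$; if $v(b)>0$ then $\beta\equiv a\in\O_F^\times\pmod{\m_{K_d}}$, and since $1+\m_{K_d}\subseteq\O_{K_d}^{\times3}$ (because $3\in\O_F^\times$) we get $[\beta]=[a]$, which lies in $\ker(\Nm)$ only if $a^2\in\O_F^{\times3}$, forcing $a\in\O_F^{\times3}$ and $\alpha=0$ --- a contradiction. Hence $v(b)=0$, so $\Disc(x^3-3cx-2a)=324\,b^2d\in\O_F^\times$, which forces $\O_F[\theta]=\O_{L_\beta}$ and makes $L_\beta/F$ unramified. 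Conversely, if $L=L_\alpha$ is unramified, then $L$ is a monogenic cubic field; completing the cube we write $L=F[x]/(x^3-3cx-t)$ with $\O_L=\O_F[\theta]$, $c,t\in\O_F$, and $\Disc(x^3-3cx-t)=\Disc(\O_L)\in\O_F^\times$. Put $a=t/2\in\O_F$ and $b=\bigl(\Disc(\O_L)/(324\,d)\bigr)^{1/2}$; this $b$ lies in $\O_F^\times$ because $324=18^2$ is a square and $\Disc(\O_L)\equiv d\pmod{F^{\times2}}$ (the orbit $\alpha$ lies in $V(F)_d$). Then $\beta:=a+b\omega\in\O_{K_d}$ satisfies $\Nm\beta=c^3$ and $2a=t$, so $L_\beta=L$; since $\beta-\bar\beta=2b\omega\in\O_{K_d}^\times$ one checks $[\beta]\in H^1_\un(F,C_d)$, and as the two classes of $H^1(F,C_d)$ mapping to a given cubic field are interchanged by $\Gal(K_d/F)$ --- which preserves $H^1_\un(F,C_d)$ --- the class $\alpha$ is unramified too.

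The step I expect to be most delicate is setting up the parametrization and verifying the discriminant identity with the correct normalizations --- in particular pinning down that $x^3-3cx-2a$ represents the torsor $\alpha$ itself (confusing it with $\alpha^{-1}$ would be harmless but should be noted) --- and handling the split case $K_d\cong F\times F$, where ``$\beta$ is a unit'' must be replaced by ``both components of $\beta$ have valuation divisible by $3$'' throughout the last argument. The remaining ingredients are routine commutative and local algebra; alternatively one could replace the explicit parametrization by Delone--Faddeev theory over $\O_F$ (\Cref{prop:DeloneFaddeev}), at the cost of setting up the integral arithmetic invariant theory of $C_d$ over $\O_F$.
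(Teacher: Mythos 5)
Your proposal is correct in substance, but it takes a genuinely different and much heavier route than the paper. The paper's proof is two lines and purely torsor-theoretic: an unramified class is precisely one that dies after restriction to an unramified extension, and the binary cubic form $f_L$ attached to $\alpha$ acquires a root over $L$; so if $L/F$ is unramified then $\alpha|_L = 0$ and $\alpha$ is unramified, while if $L/F$ is ramified then $f_L$ stays irreducible over every unramified extension of $F$, so $\alpha$ survives and is ramified. You instead make the isomorphism $(\ref{eq:explicit})$ explicit via the classical mirror-field parametrization $\beta = a+b\omega \mapsto x^3-3cx-2a$ and detect ramification through $\Disc = 324\,b^2d$. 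Your computations do check out: the identities $c^3-a^2=3db^2$ and $\Disc(x^3-3cx-2a)=324\,b^2d$, the reduction to $d\in\O_F^\times$, the use of $1+\m_{K_d}\subseteq \O_{K_d}^{\times 3}$, and the unit/valuation arguments in both the inert and split cases all go through (in the converse direction $\beta$ itself need not be a unit when $c$ is not, but $\beta-\bar\beta\in\O_{K_d}^\times$ together with $\Nm\beta=c^3$ does force the class to be unramified, exactly as you indicate). The one load-bearing step you do not supply is the compatibility of your Kummer construction with the correspondence the paper actually uses (\Cref{prop:AIT} together with the cubic-algebra dictionary of \Cref{prop:DeloneFaddeev}): namely that the class $[\beta]$ under $(\ref{eq:explicit})$ corresponds, up to inversion, to the cubic algebra $F[x]/(x^3-3cx-2a)$, with trivial classes corresponding to non-fields. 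This is a classical fact (Hasse's parametrization of cubic fields by the mirror quadratic field), and since both ``$\alpha$ unramified'' and ``$L$ unramified'' are insensitive to $\alpha\mapsto -\alpha$, the normalization issue you flag is indeed harmless; but, as you say yourself, this parametrization is the real content of your argument and would have to be proved or precisely cited, whereas the paper's restriction argument avoids it entirely, along with your case analysis and the (harmless, given the section's standing hypothesis) reliance on residue characteristic $\neq 2$. What your approach buys is explicit unit representatives and an effective handle on the orders $\O_F[\theta]$, which is in the spirit of, though not needed for, the integrality analysis of \Cref{thm:integralorbits}.
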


\begin{proof}
Let $f_L \in V(F)$ be the corresponding binary cubic form. If $L$ is unramified, then since $f_L$ becomes reducible over $L$, the restriction of $\alpha$ to $H^1(L, C_d)$ is trivial. Thus, $\alpha$ is an unramified class. If $L$ is ramified, then $f_L$ remains irreducible over every unramified extension of $F$, and hence $\alpha$ is ramified. 
\end{proof}

\begin{lemma}\label{lem:table-h10cd}
Assume the residue characteristic of $F$ is not $3$. Then:
\begin{enumerate}
    \item $\dim H^1(F, C_d) = \dim H^0(F, C_d) + \dim H^0(F, C_{-3d})$.
    \item $\dim H^1_\un(F,C_d) = \dim H^0(F, C_d)$.
\end{enumerate} 
When $d$ has even valuation, these dimensions are computed in Table $\ref{norms-dimension-table_cd}$ below.
\end{lemma}

\begin{table}[h]
	\caption{Dimensions of $H^1(F, C_d)$ and $H^1_\un(F, C_d)$}\label{norms-dimension-table_cd}

\begin{tabular}{|c|c|c|}
	\hline
&$d\in F^{\times2}$&$-3d\in F^{\times2}$\\
\hline
$\zeta_3\in F$&\multicolumn{2}{c|}{\begin{tabular}{l}
	$\dim H^1(F, C_d)=2$\\
	$\dim H^1_\un(F, C_d)=1$\end{tabular}}\\
\hline
$\zeta_3\notin F$&\begin{tabular}{l}
	$\dim H^1(F, C_d)=1$\\
	$\dim H^1_\un(F, C_d)=1$
\end{tabular}&\begin{tabular}{l}
$\dim H^1(F, C_d)=1$\\
$\dim H^1_\un(F, C_d)=0$
\end{tabular}\\
\hline
\end{tabular}
\end{table}
\begin{proof}
First note that $C_d$ is Cartier dual to $C_{-3d} \simeq C_d \otimes \mu_3$. Since the residue characteristic is not $3$, the Euler--Poincar\'e characteristic formula \cite[I.2.8]{Milne} immediately gives $(i)$. Let $I_F \subset G_F$ be the inertia group and let $g$ be the Frobenius element of $G_F/I_F$. Then the groups $H^1_\un(F, C_d) \simeq H^0(I_F, C_d)/(g-1)H^0(I_F,C_d)$ and $H^0(I_F,C_d)[g-1] = H^0(F, C_d)$ have the same cardinality, which proves $(ii)$. The table is computed using the fact that $\dim H^0(F,C_d) = 1$ if and only if $d \in F^{\times 2}$ and the dimension is $0$ otherwise.
\end{proof}

 The main result of this section is the following classification of the integral orbits in $V(F)_d$.  
\begin{theorem}\label{thm:integralorbits}
Let $\O_F$ be the ring of integers of a local field $F$ with $\charr \O_F/\m > 3$, and let $d \in \O_F$ be non-zero.
\begin{enumerate}[label=$(\alph*)$]
\item If $v(d) = 0$, then $H^1_\mathrm{int}(F, C_d) = H^1_\un(F, C_d)$. 
\item If $v(d)$ is odd, then $H^1_\mathrm{int}(F, C_d) = H^1 (F, C_d)=0$. 
\item If $v(d) = 2$, then the only non-integral classes are the non-trivial unramified classes. 
\item If $v(d) > 2$, then all classes are integral.  
\end{enumerate}
\end{theorem}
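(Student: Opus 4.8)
The plan is to translate the problem, via \Cref{prop:integrality criterion}, into a question about which colengths of $\O_F$-orders occur in the maximal order $\O_L$ of the cubic $F$-algebra $L$ attached to a class $\alpha \in H^1(F, C_d)$. Write $v_0 := v(\Disc \O_L)$. Any $\O_F$-order $S \subseteq \O_L$ satisfies $v(\Disc S) = v_0 + 2\ell$, where $\ell \geq 0$ is the colength $\mathrm{length}_{\O_F}(\O_L/S)$; moreover, since $L$ is the cubic algebra of a class in $H^1(F, C_d)$, we have $\Disc(L) \equiv d \pmod{F^{\times 2}}$, hence $v_0 \equiv v(d) \pmod 2$. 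So \Cref{prop:integrality criterion} says that $\alpha$ is integral if and only if $v(d) \geq v_0$ and $\O_L$ contains an $\O_F$-order of colength $\ell := (v(d) - v_0)/2$ (an integer, as $2 \mid v(d) - v_0$).

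First I would dispose of the trivial class $\alpha = 0$, which corresponds to $L = F \times E_d$ with $E_d = F[x]/(x^2-d)$: the cubic $\O_F$-ring $\O_F \times \O_F[x]/(x^2-d) \subseteq \O_L$ has discriminant $4d$, of valuation $v(d)$ since $\charr \F > 2$, so the trivial class is integral (and unramified) for every nonzero $d \in \O_F$. For $\alpha \neq 0$, $L$ is a cubic field; as $\charr \F > 3$ the extension $L/F$ is tame, hence either unramified, with $v_0 = 0$, or totally ramified, with $v_0 = e - 1 = 2$, and by \Cref{prop:unram} the first case is precisely when $\alpha$ is unramified. In particular $v_0 \in \{0,2\}$, so $v(d) \geq v_0$ unless $v(d) = 0$ and $L/F$ is ramified.

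The technical core is then to determine, for a cubic field $L/F$, which colengths of $\O_F$-orders occur in $\O_L$; recall $\O_L = \O_F[\theta]$ is monogenic, say $\theta^3 = -a\theta^2 - b\theta - c$ with $a,b,c \in \O_F$. Colength $0$ is realized by $\O_L$. By \Cref{order} and \Cref{splitting}, an order of colength $1$ corresponds to a point of $\P^1(\F)$ in the zero locus of the reduction mod $\m$ of a form $f_L$ representing $\O_L$, and that reduction has the factorization type of $\m$ in $\O_L$: it is an irreducible cubic when $L/F$ is unramified (so no colength-$1$ order) and a cube of a linear form when $L/F$ is ramified (so exactly one colength-$1$ order). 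Finally, for each $\ell \geq 2$ there is an order of colength $\ell$: choosing $j, k \geq 1$ with $j + k = \ell$, $k \leq 2j$, and $j \leq 2k$ (possible for $\ell \geq 2$, taking $j,k$ as equal as possible), the $\O_F$-submodule $S_{j,k} := \O_F \oplus \pi^j\O_F\theta \oplus \pi^k\O_F\theta^2$ is closed under multiplication --- a short check using $(\pi^j\theta)^2 = \pi^{2j}\theta^2$, $(\pi^j\theta)(\pi^k\theta^2) = \pi^{j+k}\theta^3$, and $(\pi^k\theta^2)^2 = \pi^{2k}\theta^4$ with $\theta^4 = (a^2-b)\theta^2 + (ab-c)\theta + ac$ --- and has colength $j + k = \ell$. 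I expect this explicit construction, together with the colength-$1$ analysis, to be the main point; the rest is bookkeeping.

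With the trivial class handled, the four cases follow by analyzing $\alpha \neq 0$ (so $L$ is a cubic field and $v_0 \in \{0,2\}$). If $v(d) = 0$: $\alpha$ is integral iff $\ell = 0$, i.e. $v_0 = 0$, i.e. $L/F$ is unramified, i.e. $\alpha$ is unramified (\Cref{prop:unram}); together with the trivial class this gives $H^1_\mathrm{int}(F, C_d) = H^1_\un(F, C_d)$, which is (a). If $v(d)$ is odd: a nontrivial class would have $v_0 = v(d)$ odd, contradicting $v_0 \in \{0,2\}$, so $H^1(F, C_d) = 0$ --- alternatively this follows from the norm description of $H^1(F, C_d)$, since $K_d = F[x]/(x^2+3d)$ is then a ramified quadratic field --- which is (b). If $v(d) = 2$: $\ell = 1 - v_0/2 \in \{0,1\}$, and $\alpha$ is integral iff $\O_L$ has a colength-$\ell$ order, which holds iff $v_0 = 2$ ($L/F$ ramified) and fails iff $v_0 = 0$ ($L/F$ unramified); so the non-integral classes are exactly the nontrivial unramified ones, which is (c). If $v(d) > 2$: $\ell = (v(d) - v_0)/2 \geq 1$, and a colength-$\ell$ order exists --- for $\ell \geq 2$ via $S_{j,k}$, and for $\ell = 1$ because then $v_0 = 2$, so $L/F$ is ramified and admits a colength-$1$ order --- so, together with the trivial class, every class is integral, which is (d).
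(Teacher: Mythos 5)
Your proposal is correct and takes essentially the same route as the paper's proof: both reduce, via \Cref{prop:integrality criterion}, to the existence of cubic orders of prescribed discriminant valuation, use tameness (so $v(\Disc\O_L)\in\{0,2\}$, with \Cref{prop:unram} matching unramifiedness of $L$ to unramifiedness of the class), and detect the colength-one obstruction for the unramified cubic field via \Cref{splitting} and \Cref{order}. The only real difference is how orders of larger colength are produced: you give an explicit monogenic construction $\O_F\oplus\varpi^j\O_F\theta\oplus\varpi^k\O_F\theta^2$, whereas the paper iterates the suborders $\O_F+\m^k\O_L$ and $\O_F+\m^k S$; both arguments (and your alternative treatments of the trivial class and of part $(b)$) are sound.
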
 
\begin{proof}
\begin{enumerate}[label=$(\alph*)$]
\item This case follows from Propositions \ref{prop:integrality criterion} and \ref{prop:unram}.
\item 
We have $H^1(F,C_d) = 0$ by Lemma \ref{lem:table-h10cd} (since $H^0(F,C_d) = 0$ whenever $d$ has odd valuation).
By \Cref{rem:trivial-integral}, the trivial class is integral.
\item The ramified classes $\alpha$ correspond to totally ramified cubic extensions $L/F$. For such $L$ we have $v(\Disc \O_L) = v(d)$, and hence these $\alpha$ are integral by \Cref{prop:integrality criterion}. If $H^1(F,C_d)$ has a non-trivial unramified class $\alpha$, then it corresponds to the unique unramified cubic extension $L/F$, which has unit discriminant. By \Cref{prop:integrality criterion}, $\alpha$ is integral if and only if $\O_L$ has an order of index $q$. By \Cref{splitting}, the binary form corresponding to $\O_L$ has no root over $\F$. So by \Cref{order}, $\O_L$ has no order of index $q$. Hence the non-trivial unramified classes are indeed non-integral. 
\item If $\alpha \in H^1(F,C_d)$ corresponds to a ramified cubic extension $L/F$, then $\O_L$ has discriminant of valuation $2$. By Propositions \ref{splitting} and \ref{order}, $\O_L$ has a unique order $S$ of index $q$, and hence $v(\Disc \, S) = 4$. Note that if $S_0$ is a cubic $\O_F$-ring, then $S_0' = \O_F + \m S_0$ is a subring of $S$ of index $q^2$ and $\Disc(S'_0) = q^4\Disc(S_0)$. Thus, by considering the orders $\O_F + \m^k \O_L$ and $\O_F + \m^k S$, we can find an order $S' \subset \O_L$ with $v(\Disc \, S') = 2k$, for any $k \geq 1$.

Next let $\alpha \in H^1(F,C_d)$ be a non-trivial unramified class corresponding to the unramified cubic extension $L/F$. Then $v(\Disc(\O_F + \m\O_L)) = 4$. By \Cref{order}, there are $q + 1$ suborders $S' \subset \O_F + \m \O_L$ of index $q$, so that $v(\Disc \, S') = 6$. As  before, we deduce that there exists an order $S'' \subset \O_L$ with $v(\Disc \, S'') = 2k$, for any $k \geq 2$.   
\end{enumerate}
\end{proof}

\section{Local Selmer conditions for \texorpdfstring{$\zeta$}{zeta}-linear isogenies}\label{sec:localselmer}

Let $F$ be a finite extension of $\Q_p$ with surjective discrete valuation $v$, ring of integers $\O_F$, uniformizer $\varpi$, and residue field $\F$. Let $m \geq 1$, $n = 3^m$, and let $\zeta = \zeta_n$ be a primitive $n$-th root of unity. Let $A$, $B$ be abelian varieties over $F$ that admit $\zeta$-multiplication.

Let $\phi \colon A \to B$ be a $\zeta$-linear $3$-isogeny over $F$, as defined in \Cref{sect: zeta-linear-isogenies}. For each $d\in F\t$, we consider the $3$-isogeny $\phi_d \colon A_d \to B_d$, as in \Cref{subsec:twists}. We will assume in this section that $A[\phi](F) \neq 0$, that is, that $A[\phi]$ is generated by a rational point. This can always be achieved by replacing $\phi$ with an appropriate twist, so there is no loss in generality. We also assume that $0 \leq v(d) < 2n$, again with no loss in generality.

The group $H^1(F, A_d[\phi_d])$ is a finite dimensional $\F_3$-vector space. In fact, if $\chi_d \colon G_F \to \F_3^\times$ is the quadratic character cutting out $F(\sqrt{d})$, then $A_d[\phi_d] \simeq A[\phi] \otimes \chi_d$ is isomorphic to $C_d$ from \Cref{sect: integral}. Thus, $H^1(F, A_d[\phi_d]) \simeq H^1(F,C_d)$. The exact sequence 
\[0 \to A_d[\phi_d] \to A_d \to B_d \to 0\]
induces a Kummer map
\[\partial_d \colon B_d(F) \to H^1(F,A_d[\phi_d]).\]
We call its image $\im(\partial_d) \subset H^1(F,A_d[\phi_d])$ the subgroup of \emph{soluble classes}. The goal of this section is to prove the following theorem describing the soluble classes and to compute the local Selmer ratios of $\phi_d$.

\begin{theorem}\label{thm:soluble orbits}
Assume that $\charr\F\neq3$, that $A$ has good reduction, and that $A[\phi](F)\ne 0$. Suppose that $0 \leq v(d) < 2n$.
\begin{enumerate}[label=$(\alph*)$]
\item If $v(d) = 0$, then $\im(\partial_d) = H^1_{\mathrm{un}}(F, A_d[\phi_d])$. 
\item If $v(d)$ is odd, or more generally, if $F(\sqrt{d})/F$ is ramified, then $\im(\partial_d) = H^1(F, A_d[\phi_d]) = 0$. 
\item If $v(d) > 0$ is even and $d\notin F^{\times 2}$, then $H^1_{\mathrm{un}}(F, A_d[\phi_d]) = 0$.
\item If $v(d) > 0$ is even and $d \in F^{\times 2}$, then write $d = \varpi^{v(d)}u$ with $u\in\O_F\t$ and let $s = \gcd(3^m,v(d))$.
Then $\im(\partial_d) \cap H^1_{\mathrm{un}}(F, A_d[\phi_d]) = 0$ if and only if $A_u[\phi]$ is a direct summand of $A_u[\pi^{s}]$.
\end{enumerate}
\end{theorem}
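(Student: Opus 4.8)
My plan is to dispatch parts (a)–(c) quickly and concentrate on (d). Throughout, recall $A[\phi]\cong\F_3$ is the trivial module (as $A[\phi](F)\ne 0$), and $A_d[\phi_d]\cong A[\phi]\otimes\chi_d$ where $\chi_d$ is the quadratic character of $F(\sqrt d)/F$. For (a), $A_d$ is an unramified twist of $A$, hence of good reduction; a formal‑group argument ($\charr\F\ne 3$, so $[3]$ is invertible on $\hat A(\m)$) shows $\im(\partial_d)\subseteq H^1_{\mathrm{un}}(F,A_d[\phi_d])$, and both groups have order $\#A_d[\phi_d](F)$ — the left side because the good‑reduction Selmer ratio is $c_v(\phi_d)=1$, the right because $A_d[\phi_d]$ is unramified. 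For (b), if $F(\sqrt d)/F$ is ramified then $H^0(F,\chi_d)=H^2(F,\chi_d)=0$, so $H^1(F,A_d[\phi_d])=0$ by the local Euler characteristic formula (the factor $\|3\|_F$ being trivial). For (c), $\chi_d$ is the unramified quadratic character, so $\mathrm{Frob}$ acts by $-1$ on $A_d[\phi_d]$ and $H^1_{\mathrm{un}}(F,A_d[\phi_d])=A_d[\phi_d]/(\mathrm{Frob}-1)A_d[\phi_d]=0$.

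For (d) I would first reduce to $u=1$. Writing $d=\varpi^{v(d)}u$, the hypotheses $v(d)$ even and $d\in F^{\times 2}$ force $u\in\O_F^{\times 2}$, so $A_u$ is an unramified twist of $A$ — still of good reduction, with $A_u[\phi_u](F)\ne 0$ — and since twists compose, $A_d=(A_u)_{\varpi^{v(d)}}$ and $\phi_d=(\phi_u)_{\varpi^{v(d)}}$ (see §\ref{subsec:twists}). Replacing $(A,\phi)$ by $(A_u,\phi_u)$ I may assume $d=\varpi^{v(d)}$; note also that $v(d)$ even with $0<v(d)<2\cdot 3^m$ forces $r<m$, so $e:=3^{m-r}\ge 3$. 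Now $A_d[\phi_d]\cong\F_3$ is trivial, so a soluble class is unramified exactly when some $\phi_d$‑preimage of the corresponding point of $B_d(F)$ lies in $A_d(F^{\mathrm{un}})$; the long exact sequence for $0\to A_d[\phi_d]\to A_d\to B_d\to 0$ over $F^{\mathrm{un}}/F$ then gives a canonical isomorphism
\[\im(\partial_d)\cap H^1_{\mathrm{un}}(F,A_d[\phi_d])\;\cong\;\ker\!\big(H^1(\hat\Z,A_d[\phi_d])\longrightarrow H^1(\hat\Z,A_d(F^{\mathrm{un}}))\big),\qquad \hat\Z=\Gal(F^{\mathrm{un}}/F).\]

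The crux is the $\hat\Z$‑module structure of $A_d(F^{\mathrm{un}})$. Since $\gcd(2n,v(d))=2\cdot 3^r$, the twist $A_d$ becomes isomorphic to $A$ over the totally, tamely ramified cyclic extension $L:=F^{\mathrm{un}}(\varpi^{1/e})$ of degree $e$; writing $h$ for the associated twisted generator of $\Gal(L/F^{\mathrm{un}})$ acting on $A(L)$, one has $A_d(F^{\mathrm{un}})=A(L)^{\langle h\rangle}$, and $h$ reduces to an order‑$e$ automorphism of the good reduction $\tilde A$ lying in $\iota_{\tilde A}(\mu_e)$. The key input is the identity of ideals $(1-\zeta^{3^r})=(1-\zeta)^{3^r}$ in $\Z[\zeta]$: it gives $\tilde A^{\langle h\rangle}=\tilde A[1-\zeta^{3^r}]=\widetilde{A[\pi^{3^r}]}$ — the reduction of the $G_F$‑module $A[\pi^{3^r}]$, which is unramified as $A$ has good reduction — and it shows that $\iota_A(\mu_e)$ acts trivially on $A[\pi^{3^r}]$. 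Taking $\langle h\rangle$‑invariants of $0\to\hat A(\m_L)\to A(L)\to\tilde A(\bar\F)\to 0$, and using that $\hat A(\m_L)$ is uniquely $3$‑divisible and hence has vanishing higher $\langle h\rangle$‑cohomology (as $e$ is a power of $3$), produces a short exact sequence of $\hat\Z$‑modules
\[0\longrightarrow\hat A(\m_L)^{\langle h\rangle}\longrightarrow A_d(F^{\mathrm{un}})\longrightarrow\widetilde{A[\pi^{3^r}]}(\bar\F)\longrightarrow 0\]
in which $\hat\Z$ acts on the quotient by Frobenius, matching its action on the unramified module $A[\pi^{3^r}]$ (the triviality of $\iota_A(\mu_e)$ on $A[\pi^{3^r}]$ kills the twisting cocycle). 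The kernel is $3$‑torsion‑free, so the sequence splits $\hat\Z$‑equivariantly; and $A_d[\phi_d]=A[\phi]$, being $3$‑torsion, maps isomorphically onto $\widetilde{A[\phi]}\subseteq\widetilde{A[\pi^{3^r}]}$. Consequently the kernel above equals $\ker\big(H^1(\hat\Z,A[\phi])\to H^1(\hat\Z,A[\pi^{3^r}])\big)$.

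To conclude, since $\hat\Z$ has cohomological dimension $1$ and $A[\phi]\cong\F_3$ is a trivial one‑dimensional module, one checks directly with cocycles — choose an $\F_3$‑linear splitting of $0\to A[\phi]\to A[\pi^{3^r}]\to Q\to 0$ and measure its failure to be $\mathrm{Frob}$‑equivariant — that this kernel vanishes if and only if the sequence of $\F_3[\hat\Z]$‑modules splits, i.e.\ if and only if $A[\phi]$ is a $\hat\Z$‑module direct summand, equivalently a $G_F$‑module direct summand (since $A[\pi^{3^r}]$ is unramified), of $A[\pi^{3^r}]$. I expect the main obstacle to be the bookkeeping of the middle step: setting up the twisted‑form description over $F^{\mathrm{un}}$, pinning down the twisting automorphism, identifying the quotient as $\widetilde{A[\pi^{3^r}]}$ via $(1-\zeta^{3^r})=(1-\zeta)^{3^r}$, and verifying that the possibly wildly ramified formal group $\hat A(\m_L)$ contributes nothing — which is exactly where the good‑reduction hypothesis and the fact that $e$ is a $3$‑power are used.
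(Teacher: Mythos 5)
Your proposal is correct, and for the crucial part $(d)$ it takes a genuinely different route from the paper. The paper restricts to the unramified cubic extension $E/F$ and shows $\im(\partial_d)\cap H^1_{\mathrm{un}}=0$ if and only if $B_d(F)/\phi_d A_d(F)\to B_d(E)/\phi_d A_d(E)$ is injective; it then invokes two torsion lemmas (proving $X_d[3](k)=X_d[\pi^{3^r}](k)=X_u[\pi^{3^r}](k)$ for $k\in\{F,E\}$ via the $\I\otimes_{\Z[\zeta]}X$ description of twists) to reduce to a statement about $\pi^{3^r}$-torsion, and proves the harder implication by an explicit Frobenius generalized-eigenspace construction of a non-split subextension. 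You instead identify $\im(\partial_d)\cap H^1_{\mathrm{un}}$ with $\ker\bigl(H^1(\hat\Z,A[\phi])\to H^1(\hat\Z,A_d(F^{\mathrm{un}}))\bigr)$, compute $A_d(F^{\mathrm{un}})=A(L)^{\langle h\rangle}$ over the tame totally ramified extension $L=F^{\mathrm{un}}(\varpi^{1/e})$, and use unique $3$-divisibility of the formal group together with $(1-\zeta^{3^r})=(1-\zeta)^{3^r}$ to present $A_d(F^{\mathrm{un}})$ as an extension of the unramified module $A_u[\pi^{3^r}]$ by a uniquely $3$-divisible group; the "if and only if" then follows from the structure of modules over $\F_3[\mathrm{Frob}]$ (namely $A[\phi]\cap(\mathrm{Frob}-1)A_u[\pi^{3^r}]=0$ precisely when $A[\phi]$ is a summand), so no eigenspace construction is needed. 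Your route is essentially a cohomological repackaging of the paper's torsion lemmas (your exact sequence recovers $A_d[3^\infty](k)\cong A_u[\pi^{3^r}](k)$ for unramified $k$), and both arguments rest on the same inputs: good reduction, tameness of $e=3^{m-r}$, and triviality of $\mu_e$ on $A[\pi^{3^r}]$; what your version buys is a uniform treatment of both implications, what the paper's buys is that it only ever works over the degree-$3$ unramified extension rather than over $F^{\mathrm{un}}$.

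A few points to tighten, none fatal. In $(a)$ and $(c)$ you tacitly assume the quadratic part of the twist is unramified; in residue characteristic $2$ with $F(\sqrt d)/F$ ramified this fails, but then your $(b)$ gives $H^1(F,A_d[\phi_d])=0$ and both claims hold trivially -- say this explicitly (the paper does the same). In the reduction to $u=1$, the reason $A_u$ still has good reduction is that $u\in\O_F^{\times2}$ lets you take $u^{1/2n}$ to be a $3^m$-th root of a unit, so the twisting cocycle is unramified even in residue characteristic $2$. The assertion "the kernel is $3$-torsion-free, so the sequence splits $\hat\Z$-equivariantly" is too quick as stated: what you need (and have proved) is unique $3$-divisibility of $\hat A(\m_L)^{\langle h\rangle}$, which kills the relevant $\mathrm{Ext}^1$ against a $3$-torsion quotient -- or you can avoid splittings entirely, since the connecting map $A_u[\pi^{3^r}](F)\to H^1(\hat\Z,\hat A(\m_L)^{\langle h\rangle})$ vanishes for the same reason and the two kernels then agree by a diagram chase. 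Finally, "the kernel vanishes iff the $\F_3[\hat\Z]$-sequence splits" is not a general fact about extensions; it holds here because $\hat\Z$ is procyclic and $A[\phi]$ is one-dimensional and Frobenius-fixed, so the module-theoretic verification you sketch should actually be written out.
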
 

We retain these assumptions on $A$ and $\F$ for the remainder of this section.

\subsection{Non-square and unramified twists}

We first prove parts $(a)$-$(c)$.

\begin{proof}[Proof of Theorem $\ref{thm:soluble orbits}(a)\text{-}(c)$]
For $(a)$, assume at first that $\charr \F > 3$. Then $F(d^{1/2n})$ is unramified over $F$, since $v(d) = 0$ and $(\charr \F, 2n) = 1$. Since $A_d$ is isomorphic to $A$ over $F(d^{1/2n})$, it has good reduction over an unramified extension, and hence has good reduction already over $F$. Since $A_d$ has good reduction and $\charr \F \nmid \deg(\phi_d)$, the image of the Kummer map $\partial_d$ is exactly the unramified classes \cite[Prop.\ 2.7$(d)$]{CesnaviciusFlat}. The proof just given works even when $\charr \F = 2$, as long as $F(\sqrt{d})/F$ is unramified. When this extension is ramified, the result follows from $(b)$. Part $(b)$ itself follows from \Cref{thm:integralorbits}$(b)$. The case $\charr \F = 2$ was not dealt with there, but the proof is identical.

For $(c)$, we have $H^1_\un(F, A_d[\phi_d]) = H^1_\un(F, C_d) =  0 $ by \Cref{lem:table-h10cd}, since $H^0(F, C_d) = 0$ whenever $d\notin F^{\times 2}$.
\end{proof}

\subsection{Twists of positive valuation}

The proof of \Cref{thm:soluble orbits}$(d)$ will take more work. Indeed, we will compute more generally the size of $\im(\partial_d)$ for all $d$ (including $d\notin F^{\times 2}$) such that $v(d)$ is even and positive. Let $s = \gcd(v(d), 3^m)$, and write $d = \varpi^{v(d)}u$ for $u\in \O_F\t$. Recall the map $\pi^s\:A\to \As$ defined in \Cref{sec:zeta-mult}, and let $\psi^s\:B\to \As$ be the isogeny such that $\psi^s\circ\phi = \pi^s$.
\subsubsection{Extension classes}
For each $t \in F^\times$, let $\kappa^s_t$ be the extension class corresponding to the short exact sequence
\begin{equation}\label{eq:short-ex-seq-1}0 \to A_t[\phi_t] \to A_t[\pi^s_t] \xrightarrow{\phi_t} B_t[\psi^s_t] \to 0.\end{equation}
Thus $\kappa_t^s = 0$ if and only if $A_t[\phi_t]$ is a direct summand of $A_t[\pi_t^s]$ as a $G_F$-module. Similarly, let $\widehat\kappa_t^s$ be the class of the extension
\begin{equation}\label{eq:short-ex-seq-2}0\to B_t[\psi^s_t]\to B_t[\pi_t^s]\xrightarrow{\psi^s_t} \As_t[\phi^{(s)}_t]\to 0.\end{equation}
Thus, $\widehat\kappa_t^s = 0$ if and only if $B_t[\psi^s_t]$ is a direct summand of $B_t[\pi_t^s]$.

\begin{remark}\label{rem:duality}
 By \Cref{lem:tensor-cyclotomic}, the cocycle $\widehat\kappa_t^s$ is equal to the class of the extension
 \[0\to B_t^{(-s)}[\psi_t^{-s}]\to B_t^{(-s)}[\pi_t^s]\to A_t[\phi_t]\to 0.\]
 Here, we have $\phi\circ\psi_t^{-s} = \pi^s\:B_t^{(-s)}\to B_t$. By duality, $\widehat\kappa_t^s$ is the class of the extension
 \[0\to \widehat B_t[\widehat\phi_t] \to \widehat B_t[\widehat\pi_t^{s}] \to \widehat A_t[\widehat\psi_t^{-s}] \to 0.\]
 Thus $\widehat\kappa^s_t = 0$ if and only if $\widehat B_t[\widehat\phi_t]$ is a direct summand of $\widehat B_t[\widehat\pi_t^s]$, which explains the notation.
\end{remark}

Let $|\kappa_u^s|$ and $|\widehat\kappa_u^s|$ denote the orders of the classes $\kappa_u^s$ and $\widehat{\kappa}_u^s$ in their respective Ext-groups. 

\subsubsection{The image of $\partial_d$}

The following theorem relates the size of $\im\partial_d$ to $|\kappa_u^s|$ and $|\widehat\kappa_u^s|$ and will finish the proof of \Cref{thm:soluble orbits}$(d)$.

\begin{theorem}\label{thm:kummer-image}
    Assume that $v(d)$ is even and positive, and write $d = \varpi^{v(d)}u$ with $u\in \O_F\t$.
    \begin{enumerate}
        \item $\#\im\partial_d\cap H^1_\un(F, A_d[\phi_d]) =\begin{cases} |\kappa_u^s|&d\in F^{\times 2}\\1&\text{otherwise.}\end{cases}$
        \item $\#\br{\frac{\im\partial_d}{\im\partial_d\cap H^1_\un(F, A_d[\phi_d])}}=\begin{cases} \frac{3}{|\widehat\kappa_u^s|}&-3d\in F^{\times 2}\\1&\text{otherwise.}\end{cases}$
    \end{enumerate}
    In particular, if $d\in F^{\times 2}$,  then $\im\partial_d\cap H^1_\un(F, A_d[\phi_d])=0$ if and only if $A_u[\phi_u]$ is a direct summand of $A_u[\pi^s_u]$.
\end{theorem}

The proof of \Cref{thm:kummer-image} requires several preliminary results.
\begin{lemma}
The dimensions of $H^1(F, A_d[\phi_d])$ and $H^1_\un(F,A_d[\phi_d])$ are as in Table $\ref{norms-dimension-table}$.
\end{lemma}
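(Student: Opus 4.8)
The plan is to compute the two dimensions $\dim_{\F_3} H^1(F, A_d[\phi_d])$ and $\dim_{\F_3} H^1_\un(F, A_d[\phi_d])$ using the local Euler characteristic formula together with local Tate duality, case by case according to the value of $v(d)$ mod $2n$. Since $A_d[\phi_d] \simeq C_d$ is a $1$-dimensional $\F_3$-vector space on which $G_F$ acts through the quadratic character $\chi_d$ cutting out $F(\sqrt d)$, everything is controlled by whether $\sqrt d \in F$ (equivalently whether the action is trivial), whether $F(\sqrt d)/F$ is ramified, and the residue characteristic. Concretely, $H^0(F, A_d[\phi_d])$ is $\Z/3\Z$ if $d \in F^{\times 2}$ and $0$ otherwise; by local Tate duality and the fact that $\mu_3 \otimes \chi_d$ is Cartier-dual to $A_d[\phi_d]$, $H^2(F, A_d[\phi_d])$ is dual to $H^0(F, \mu_3 \otimes \chi_d)$, which is nonzero precisely when $\mu_3 \subset F$ and $d \in F^{\times 2}$. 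The local Euler characteristic formula then gives
\[
\dim H^1(F, A_d[\phi_d]) = \dim H^0 + \dim H^2 + \big[\charr \F = 3\big]\cdot 1,
\]
the last term being the contribution of $|\#A_d[\phi_d]|_F^{-1}$, which is trivial unless $\charr \F = 3$ (but for the purposes of this paper only $\charr \F \neq 3$ is needed in the relevant table, so I would note the residue-characteristic-$3$ case separately or simply exclude it as Table~\ref{norms-dimension-table} presumably does).

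Next I would compute $\dim H^1_\un(F, A_d[\phi_d])$, which when $\charr \F \neq 3$ equals $\dim H^0(F, A_d[\phi_d])$ by the standard fact that for a finite $G_F$-module $M$ of order prime to $\charr \F$, the unramified cohomology $H^1_\un(F, M) = H^1(G_{\F}, M^{I_F})$ has the same size as $H^0(F, M)$ (since $G_\F \simeq \widehat{\Z}$ acts on the finite group $M^{I_F}$, so $\# H^1 = \# H^0$ for the $\widehat\Z$-module $M^{I_F}$, and $M^{I_F} = M$ when the inertia action is unramified, or in general $H^0(G_\F, M^{I_F}) = H^0(F,M)$). When $F(\sqrt d)/F$ is ramified, $A_d[\phi_d]^{I_F} = 0$, so $H^1_\un = 0$; when $F(\sqrt d)/F$ is unramified and nontrivial, $A_d[\phi_d]^{I_F} = A_d[\phi_d]$ but Frobenius acts nontrivially, so again $H^0 = 0$ and $H^1_\un = 0$; when $d \in F^{\times 2}$, $H^1_\un \simeq H^1_\un(F, \Z/3\Z) \simeq \Z/3\Z$. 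So $\dim H^1_\un(F, A_d[\phi_d]) = \dim H^0(F, A_d[\phi_d])$, which is $1$ if $d \in F^{\times 2}$ and $0$ otherwise, provided $\charr \F \neq 3$.

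Assembling these, one reads off the table: if $F(\sqrt d)/F$ is nontrivial then $\dim H^1 = 0$ (hence also $H^1_\un = 0$), unless $\mu_3 \subset F$ in which case there can be a contribution to $H^2$ — but if $\mu_3 \subset F$ then $H^2$ is dual to $H^0(F,\mu_3 \otimes \chi_d) = (\mu_3 \otimes \chi_d)^{G_F}$, which vanishes when $\chi_d \neq 1$, so in fact $\dim H^1 = 0$ whenever $d \notin F^{\times 2}$; and if $d \in F^{\times 2}$ then $A_d[\phi_d] \simeq \Z/3\Z$, $H^0 = \Z/3\Z$, $H^2$ dual to $\mu_3(F)$ is $\Z/3\Z$ when $\mu_3 \subset F$ and $0$ otherwise, so $\dim H^1 = 2$ if $\mu_3 \subset F$ and $\dim H^1 = 1$ if $\mu_3 \not\subset F$; and correspondingly $\dim H^1_\un = 1$. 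The main obstacle here is not any single hard step but rather bookkeeping: correctly matching the case divisions of Table~\ref{norms-dimension-table} (which splits on parity of $v(d)$, on whether $d \in F^{\times 2}$, and implicitly on whether $\mu_3 \subset F$, i.e.\ whether $q \equiv 1 \pmod 3$) with the outputs of the Euler characteristic computation, and making sure the residue-characteristic hypothesis $\charr \F \neq 3$ is invoked exactly where needed (for the Euler characteristic contribution to vanish and for $H^1_\un$ to have the same order as $H^0$). I would present the argument as: (1) identify $A_d[\phi_d] \simeq C_d \simeq \F_3(\chi_d)$ and its Cartier dual; (2) compute $H^0$ and $H^2$ via duality; (3) apply the local Euler characteristic formula to get $H^1$; (4) compute $H^1_\un$ directly; (5) tabulate.
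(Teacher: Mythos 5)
Your route (local Euler characteristic plus Tate duality for $H^1$, and $\#H^1_\un=\#H^0$ for the unramified part) is genuinely different from the paper's, which instead uses the explicit identification $H^1(F,A_d[\phi_d])\simeq\ker\bigl(K_d^\times/K_d^{\times3}\to F^\times/F^{\times3}\bigr)$ with $K_d$ the quadratic algebra of discriminant $-3d$, and reads off the table by direct computation with local unit groups. Your method can work, but as written it contains a genuine error in the duality step: you assert that $H^0(F,\mu_3\otimes\chi_d)$ is nonzero precisely when $\mu_3\subset F$ and $d\in F^{\times2}$. The correct condition is that the character $\chi_{-3}\chi_d$ giving the action on $\mu_3\otimes\chi_d$ is trivial, i.e.\ that $-3d\in F^{\times2}$; this also happens when $\zeta_3\notin F$ and $\chi_d=\chi_{-3}\neq 1$ (e.g.\ $F=\Q_5$, $d=-3$), a case your criterion misses. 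As a result, your assembled conclusion ``$\dim H^1=0$ whenever $d\notin F^{\times2}$'' contradicts Table~\ref{norms-dimension-table} in the cell $\zeta_3\notin F$, $-3d\in F^{\times2}$, where the correct values are $\dim H^1(F,A_d[\phi_d])=1$ and $\dim H^1_\un(F,A_d[\phi_d])=0$. That cell is not a corner case: it is exactly the one responsible for the entry $3/|\widehat\kappa|$ in \Cref{thm:tamagawa-ratios}, so getting it wrong would break the Selmer ratio computation downstream.

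The fix is local to that one step: with the corrected invariants computation, your Euler characteristic argument gives, for $\charr\F\neq3$, $\dim H^1(F,A_d[\phi_d])=\dim H^0(F,\F_3(\chi_d))+\dim H^0(F,\mu_3\otimes\chi_d)$, i.e.\ $1$ for each of the conditions $d\in F^{\times2}$ and $-3d\in F^{\times2}$ that holds (noting that when $\zeta_3\in F$ these two conditions coincide, which is why the top row of the table has a merged cell), and this reproduces the table. The rest of your argument is sound: the computation of $H^1_\un$ as having the same order as $H^0(F,A_d[\phi_d])$ (so dimension $1$ exactly when $d\in F^{\times2}$) matches the table in every cell, and you are right that the hypothesis $\charr\F\neq3$ must be in force for both the Euler characteristic term and the unramified comparison; the table is indeed only used under that hypothesis (in \Cref{thm:tamagawa-ratios}). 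One small misreading: the table's case division is by the square classes of $d$ and $-3d$ and whether $\zeta_3\in F$, not by the parity of $v(d)$; the parity enters only later, in how the table is applied.
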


\begin{table}[h]
	\caption{Dimensions of $H^1(F, A_d[\phi_d])$ and $H^1_\un(F, A_d[\phi_d])$}\label{norms-dimension-table}

\begin{tabular}{|c|c|c|c|}
	\hline
&$d\in F^{\times2}$&$-3d\in F^{\times2}$&$d, -3d\notin F^{\times2}$\\
\hline
$\zeta_3\in F$&\multicolumn{2}{c|}{\begin{tabular}{l}
	$\dim H^1(F, A_d[\phi_d])=2$\\
	$\dim H^1_\un(F, A_d[\phi_d])=1$\end{tabular}}&\begin{tabular}{l}
$\dim H^1(F, A_d[\phi_d])=0$\\
$\dim H^1_\un(F, A_d[\phi_d])=0$
\end{tabular}\\
\hline
$\zeta_3\notin F$&\begin{tabular}{l}
	$\dim H^1(F, A_d[\phi_d])=1$\\
	$\dim H^1_\un(F, A_d[\phi_d])=1$
\end{tabular}&\begin{tabular}{l}
$\dim H^1(F, A_d[\phi_d])=1$\\
$\dim H^1_\un(F, A_d[\phi_d])=0$
\end{tabular}&\begin{tabular}{l}
$\dim H^1(F, A_d[\phi_d])=0$\\
$\dim H^1_\un(F, A_d[\phi_d])=0$\end{tabular}\\
\hline
\end{tabular}
\end{table}
\begin{proof}
Since $H^1(F, A_d[\phi_d]) \simeq H^1(F, C_d)$ and $H^1_\un(F,A_d[\phi_d]) \simeq H^1_\un(F,C_d)$, the dimensions in \Cref{norms-dimension-table} are identical to those in \Cref{norms-dimension-table_cd}.
The bottom right cell is only relevant when $\charr \F = 2$, and follows from Lemma \ref{lem:table-h10cd}(i). \end{proof}

\begin{lemma}\label{lem:units}
If $t\in F^{\times2 s}$, then $A_t[\pi_t^{s}]\simeq A[\pi^{s}]$ and $B_t[\pi_t^{s}]\simeq B[\pi^{s}]$ as $G_F$-modules. 
\end{lemma}

\begin{proof}
By the definition of $A_t$, there is an isomorphism $\phi\:A\to A_t$ over $F(t^{1/2n})$ such that if $P\in A[\pi^{s}]$ and $\sigma\in G_F$, then in $A_t[\pi_t^s]$, we have
\[(\phi(P))^\sigma = \frac{\sigma(t^{1/2n})}{t^{1/2n}}\phi(P^\sigma). \]

	If $t\in F^{\times 2 s}$, then $\frac{\sigma(t^{1/2n})}{t^{1/2n}}\in \langle \zeta^{s}\rangle$. Since $\zeta^{s}$ acts as the identity on $A[\pi^{s}]$, we see that $A_t[\pi_t^{s}] \simeq A[\pi^{s}]$. The proof that $B_t[\pi_t^{s}]\simeq B[\pi^{s}]$ is identical.
 	\end{proof}

\begin{remark}\label{rem:units}
    In particular, if $s=1$, the condition in Theorem $\ref{thm:soluble orbits}(d)$ is simply that $A[\phi]$ is a direct summand of $A[\pi]$, which is independent of $u$.
\end{remark}

\begin{corollary}\label{cor:splitting}\mbox{}
\begin{enumerate}
    \item There is an isomorphism between
    \[0 \to A_u[\phi_u] \to A_u[\pi^s_u] \xrightarrow{\phi_u} B_u[\psi^s_u] \to 0\]
    and 
    \[0 \to A_d[\phi_d] \to A_d[\pi^s_d] \xrightarrow{\phi_d} B_d[\psi^s_d] \to 0\]
    as short exact sequences of $G_F$-modules.
    \item There is an isomorphism between
    \[0\to B_u[\psi^s_u]\to B_u[\pi_u^s]\xrightarrow{\psi^s_u} \As_u[\phi^{(s)}_u]\to 0\]
    and 
    \[0\to B_d[\psi^s_d]\to B_d[\pi_d^s]\xrightarrow{\psi^s_d} \As_d[\phi^{(s)}_d]\to 0\]
    as short exact sequences of $G_F$-modules.
\end{enumerate}
\end{corollary}

\begin{proof}
    The first claim follows from \Cref{lem:units} together with the observation that the isomorphism $A_u[\pi^s_u]\to A_d[\pi^s_d]$ restricts to an isomorphism $A_u[\phi_u]\to A_d[\phi_d]$. The second claim follows similarly.
\end{proof}

\begin{lemma}\label{lem:torsion}
Suppose that $v(d) = 2a\cdot 3^r$ is even and positive, and let $k$ be an unramified extension of $F$. For $X\in\{A, B\}$, we have
$X_d[3](k) = X_d[\pi_d^{3^r}](k).$
\end{lemma}

\begin{proof}
Let $F_\un$ be the maximal unramified extension of $F$, let $L = F_\un(\sqrt{d})$, and let $M = L(d^{1/n}) = F_\un(d^{1/2n})$. Since $k\sub L$, it suffices to show that $X_d[3](L) = X_d[\pi_d^{3^r}](L)$.

The extension $M/L$ is tamely ramified of order $3^{n-r}$, and we can choose a generator $\tau$ of $\Gal(M/L)$ so that $\tau(d^{1/2n})/d^{1/2n} = \zeta^{3^r}$. Since $X$ has good reduction, we have $X[3] \sub X(L)$.  Since $X$ and $X_d$ are isomorphic over $L(d^{1/n})$, it follows that $X_d[3]\sub X(M)$. 

Now, if $\phi\:X_d\to X$ is an isomorphism over $M$ and $P\in X_d(M)$, then by definition,
\[\phi(P)^\tau =  \tau(d^{1/2n})/d^{1/2n}\phi(P^\tau) = \zeta^{3^r}\phi(P^\tau). \]
Hence, if $P\in X_d[3](L)$, then $\phi(P) \in X[3] \sub X[3](L)$, so $\phi(P) = \zeta^{3^r}\phi(P)$. It follows that
\[0 = (1- \zeta^{3^r})\phi(P) = (1-\zeta)^{3^r}\phi(P) = \pi^{3^r}\phi(P) = \phi(\pi_d^{3^r}P),\]
where the second equality uses the fact that $P$ is a $3$-torsion point. Hence $P \in X_d[\pi_d^{3^r}]$, so $X_d[\pi_d^{3^r}](L) = X_d[3](L)$.
\end{proof}

From $(\ref{eq:short-ex-seq-1})$, we obtain a long exact sequence
\[0 \to A_d[\phi_d](F) \to A_d[\pi^s_d](F) \xrightarrow{\phi_d} B_d[\psi^s_d](F)\xrightarrow{\delta_d}H^1(F, A_d[\phi_d]).\]
Similarly, from $(\ref{eq:short-ex-seq-2})$, there is a long exact sequence
\begin{equation}\label{eq:long-exact}
    0\to B_d[\psi^s_d](F)\to B_d[\pi_d^s](F)\xrightarrow{\psi^s_d} \As_d[\phi^{(s)}_d](F)\xrightarrow{\widehat\delta_d}H^1(F, B_d[\psi^s_d]).
\end{equation}
Clearly $\im\delta_d\sub\im\partial_d$, and $\partial_d$ induces an injective map 
\[\frac{B_d[\pi_d^s](F)}{B_d[\psi^s_d](F)}\hookrightarrow\frac{\im\partial_d}{\im\delta_d}.\]

\begin{proposition}\label{prop:im-delta}
    We have
    \[\im\delta_d = \im\partial_d\cap H^1_\un(F, A_d[\phi_d]).\]
\end{proposition}

\begin{proof}
    Consider the Kummer exact sequence
   \[0\to A_d[\phi_d](F)\to A_d(F)\xrightarrow{\phi_d} B_d(F)\xrightarrow{\partial_d}H^1(F, A_d[\phi_d]).\]
    Since $\charr\F\ne 3$, we have $B_d(F)/\phi_dA_d(F) = B_d[3^\infty](F)/\phi_dA_d[3^\infty](F)$, so $\partial_d$ depends only on the exact sequence
    \[0\to A_d[\phi_d](F)\to A_d[3^\infty](F)\xrightarrow{\phi_d} B_d[3^\infty](F)\xrightarrow{\partial_d}H^1(F, A_d[\phi_d]).\]
    By \Cref{lem:torsion}, this exact sequence is the same as
    \[0\to A_d[\phi_d](F)\to A_d[\pi_d^s](F)\xrightarrow{\phi_d} B_d[\pi_d^s](F)\xrightarrow{\partial_d}H^1(F, A_d[\phi_d]).\]
    
    By \Cref{cor:splitting}, the long exact sequence
    \[0 \to A_d[\phi_d](F) \to A_d[\pi^s_d](F) \xrightarrow{\phi_d} B_d[\psi^s_d](F)\xrightarrow{\delta_d}H^1(F, A_d[\phi_d]) \]
    is isomorphic to the long exact sequence
    \[0 \to A_u[\phi_u](F) \to A_u[\pi^s_u](F) \xrightarrow{\phi_u} B_u[\psi^s_u](F)\xrightarrow{\delta_u}H^1(F, A_u[\phi_u]).\]
    Since $A_u$ has good reduction, the image of $\delta_u$ is contained in $H^1_\un(F, A_u[\phi_u])$ \cite[Prop.\ 2.7$(d)$]{CesnaviciusFlat}. Thus the image of $\delta_d$ is contained in both $H^1_\un(F, A_d[\phi_d])$ and $\im\partial_d$.

    Conversely, if $y\in B_d[\pi_d^s](F)\setminus B_d[\psi_d^s](F)$, then $\partial_d(y)$ is the cocycle $\sigma\mapsto x^\sigma - x$, where $\phi_d(x) = y$. But $x\in A_d[3]\setminus A_d[\pi_d^s]$, so by \Cref{lem:torsion}, $x$ cannot be defined over an unramified extension of $F$. It follows that $\partial_d(y)\notin H^1_\un(F, A_d[\phi_d])$. Hence $\im\delta_d\supset  \im\partial_d\cap H^1_\un(F, A_d[\phi_d])$.
    \end{proof}

We next relate the sizes of the images of $\delta_d$ and $\widehat\delta_d$ to the sizes of $|\kappa_u^s|$ and $|\widehat\kappa_u^s|$.

\begin{lemma}\label{lem:extclasses}
If $d\in F^{\times 2}$, then $\#\im\delta_d = |\kappa^s_u|$. Similarly, if $-3d\in F^{\times 2}$, then $\#\im\widehat\delta_d = |\widehat\kappa_u^s|$. 
\end{lemma}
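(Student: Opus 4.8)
The plan is to prove both assertions by the same cohomological mechanism: express each connecting map as ``evaluation against the extension class'', and then observe that after replacing $d$ by $u$ and using that $A_u$ has good reduction, one of the two outer terms in each of (\ref{eq:short-ex-seq-1}), (\ref{eq:short-ex-seq-2}) becomes the trivial $G_F$-module $\Z/3\Z$. The two assertions are formally dual (compare \Cref{rem:duality}), so I will carry out the first in detail and only indicate the changes for the second. For the first assertion, assume $d\in F^{\times2}$; since $v(d)$ is even this forces $u\in F^{\times2}$, so $\chi_u$ is trivial, $A_u[\phi_u]\simeq A[\phi]\simeq\Z/3\Z$ with trivial $G_F$-action, and $A_u$ has good reduction (it acquires good reduction over the unramified extension $F(u^{1/2n})$, using $\charr\F\ne3$). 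Write $M=A_u[\phi_u]=\Z/3\Z$, $W=A_u[\pi^s_u]$, $N=B_u[\psi^s_u]$, and $N^\vee=\mathrm{Hom}_{\F_3}(N,\Z/3\Z)$, so that (\ref{eq:short-ex-seq-1}) is a short exact sequence $0\to M\to W\to N\to0$ of $3$-torsion $G_F$-modules with extension class $\kappa\in\mathrm{Ext}^1_{\F_3[G_F]}(N,M)\simeq H^1(F,N^\vee)$. The first step is the standard fact (immediate from the cocycle descriptions of $\partial$ and of the extension class) that for $n\in N^{G_F}$ the class $\partial(n)\in H^1(F,M)$ equals the image of $\kappa$ under the map $H^1(F,N^\vee)\to H^1(F,\Z/3\Z)$ induced by evaluation $f\mapsto f(n)$; so $\im(\partial)$ is the image of the homomorphism $N^{G_F}\to H^1(F,\Z/3\Z)$, $n\mapsto\langle n,\kappa\rangle$.

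The second step uses good reduction to pass to the unramified world. Since $\charr\F\ne3$ and $A_u$ has good reduction, $W\subseteq A_u[3]$ is an unramified $G_F$-module, hence so are $M$ and $N$, and the entire sequence — together with $\kappa$ — lies in the category of finite modules over $\Gal(F^{\un}/F)\cong\widehat\Z$, topologically generated by the Frobenius $g$. By naturality of the long exact sequence along $G_F\twoheadrightarrow\Gal(F^{\un}/F)$, we get $\im(\partial)\subseteq H^1_{\un}(F,M)$, which is a one-dimensional $\F_3$-space. Restricting the pairing from the first step to the unramified parts, and using $H^1_{\un}(F,P)\cong P/(g-1)P$ for finite unramified $P$, it becomes the tautological pairing between invariants and coinvariants
\[N^{g}\ \times\ N^\vee/(g-1)N^\vee\ \longrightarrow\ \Z/3\Z,\qquad (n,f)\longmapsto f(n).\]
This pairing is well-defined and perfect: the inclusion $(g-1)N^\vee\subseteq(N^{g})^{\perp}$ is immediate, and both of these subgroups of $N^\vee$ have cardinality $\#N^\vee/\#N^{g}$ (using $\#N^{g}=\#N_{g}$ for the finite module $N$). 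Therefore $n\mapsto\langle n,\kappa\rangle$ is the zero map if $\kappa=0$ and is surjective if $\kappa\ne0$, and since $\kappa$ is $3$-torsion we conclude $\#\im(\partial)=|\kappa|$ in both cases.

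For the second assertion, assume $-3d\in F^{\times2}$; now it is the quotient term of (\ref{eq:short-ex-seq-2}) that is trivial. Indeed, by \Cref{lem:tensor-cyclotomic} (as already used in the proof of \Cref{thm:tamagawa-ratios}) we have $A^{(s)}_u[\phi^{(s)}_u]\simeq A_u[\phi_u]\otimes\chi_3\simeq\chi_d\otimes\chi_3$, and $-3d\in F^{\times2}$ means $F(\sqrt d)=F(\sqrt{-3})=F(\mu_3)$, so $\chi_d=\chi_3$ and hence $A^{(s)}_u[\phi^{(s)}_u]\simeq\Z/3\Z$ with trivial action. Then (\ref{eq:short-ex-seq-2}) reads $0\to B_u[\psi^s_u]\to B_u[\pi^s_u]\to\Z/3\Z\to0$, and the connecting map $\widehat\partial\colon\Z/3\Z=(\Z/3\Z)^{G_F}\to H^1(F,B_u[\psi^s_u])$ sends a generator directly to the extension class $\widehat\kappa$ (lift the generator to $w\in B_u[\pi^s_u]$; the cocycle $\sigma\mapsto\sigma w-w$ represents both $\widehat\partial(1)$ and $\widehat\kappa$). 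Hence $\im(\widehat\partial)=\langle\widehat\kappa\rangle$ and $\#\im(\widehat\partial)=|\widehat\kappa|$ — here no appeal to good reduction is needed.

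The only genuine subtlety I anticipate is the perfectness of the unramified evaluation pairing — equivalently the identity $(g-1)N^\vee=(N^{g})^{\perp}$ — together with the bookkeeping of twists identifying the relevant outer terms with $\Z/3\Z$; both are routine once set up. If one wishes to avoid the pairing altogether, an alternative for the first assertion is to apply $\mathrm{Hom}_{\F_3}(-,\Z/3\Z)$ to (\ref{eq:short-ex-seq-1}), which moves the $\Z/3\Z$ to the quotient side and reduces it to the computation of the second assertion, using $\#P^{g}=\#P_{g}$ for finite unramified $P$ to compare orders.
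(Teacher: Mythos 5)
Your proposal is correct, and its second half coincides with the paper's argument, but your treatment of the first assertion takes a genuinely different route. The paper disposes of the first assertion in one line: by duality (cf.\ \Cref{rem:duality}, identifying $A_u[\phi_u]$ with a cyclotomic twist of the kernel of the dual isogeny) the two assertions are equivalent, so it suffices to prove the second one, which -- exactly as in your last paragraph -- follows because for $-3d\in F^{\times2}$ the quotient in (\ref{eq:short-ex-seq-2}) is the trivial module $\F_3$ and $\widehat\partial(1)=\widehat\kappa$ by definition of the extension class. You instead prove the first assertion directly: you read $\partial$ as evaluation of $\kappa\in H^1(F,\mathrm{Hom}(N,\Z/3\Z))$ at $G_F$-invariants of $N=B_u[\psi^s_u]$, pass to the unramified category using good reduction of $A_u$, and invoke the perfect pairing between $N^{g}$ and $N^\vee/(g-1)N^\vee$. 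Both routes are valid. The paper's duality reduction is shorter and needs no good-reduction input inside the lemma, while your argument is self-contained and avoids the bookkeeping of \Cref{rem:duality}, at the cost of using that $A_u$ has good reduction -- which is legitimate here, since $d\in F^{\times2}$ with $v(d)$ even forces $u\in F^{\times2}$, and this fact is used elsewhere in the proof of \Cref{thm:tamagawa-ratios} anyway. One point your write-up should make explicit: to conclude $\#\im(\partial)=3$ when $|\kappa|=3$ you need $\kappa$ to be nonzero already as a class in $H^1(\Gal(F^{\mathrm{un}}/F),N^\vee)\simeq N^\vee/(g-1)N^\vee$, not merely in $H^1(F,N^\vee)$; this holds because the whole extension consists of unramified modules, so a $G_F$-equivariant splitting is automatically $\Gal(F^{\mathrm{un}}/F)$-equivariant (equivalently, inflation is injective on $H^1$), but it deserves a sentence.
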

\begin{proof}
By duality, we have $A_u[\phi_u] \simeq \widehat B_u[\widehat\phi_u] \otimes \chi_3$. Thus, the two claims of the lemma are in fact equivalent to each other; see \Cref{rem:duality}. We prove the second one.
Since $-3d\in F^{\times 2}$, we have $\As_u[\phi_u^{(s)}]= \F_3$ as a $G_F$-module. Moreover, by definition we have $\widehat\delta_d(1) = \widehat\kappa_d^s$. By \Cref{cor:splitting}, $|\widehat\kappa_d^s| = |\widehat\kappa_u^s|$. It follows that $|\widehat \kappa_u^s| = \#\im\widehat\delta_d$.
\end{proof}

\begin{proof}[Proof of Theorem $\ref{thm:kummer-image}$]
If $d\in F^{\times 2}$, then part $(i)$ follows immediately from \Cref{prop:im-delta} and \Cref{lem:extclasses}. If $d\notin F^{\times2}$, then $H^1_\un(F, A_d[\phi_d]) = 0$ by \Cref{norms-dimension-table}.

By \Cref{lem:torsion} and \Cref{prop:im-delta}, $\frac{\im\partial_d}{\im\partial_d\cap H^1_\un(F, A_d[\phi_d])}$ is isomorphic to the image of the injective map
\[\frac{B_d[\pi_d^s](F)}{B_d[\psi^s_d](F)}\to \frac{\im\partial_d}{\im\delta_d}\]
induced by $\partial_d$. From $(\ref{eq:long-exact})$, we have
\[\#\br{\frac{B_d[\pi_d^s](F)}{B_d[\psi^s_d](F)}} = \frac{\#\As_d[\phi_d^{(s)}](F)}{\#\im\widehat\delta_d}.\]
If $-3d\in F^{\times2}$, this is $\frac{3}{|\widehat\kappa_u^s|}$ by \Cref{lem:extclasses}. If $-3d\notin F^{\times2}$, then $H^1(F, A_d[\phi_d]) = H^1_\un(F, A_d[\phi_d])$ by \Cref{norms-dimension-table}, so the result follows from \Cref{prop:im-delta}.
\end{proof}

\subsection{Local Selmer ratios}\label{subsec:local-ratios}
For applications, we record the local Selmer ratios 
\[c(\phi_d) = \dfrac{\#\coker\left(A_d(F) \to B_d(F)\right)}{\#\ker(A_d(F) \to B_d(F))} = \dfrac{\#\im(\partial_d)}{\#A_d[\phi_d](F)},\]
which have implicitly been computed in the previous subsection.

\begin{theorem}\label{thm:tamagawa-ratios}
Assume that $\charr \F \neq 3$, that $A$ has good reduction, and that $A[\phi](F) = \Z/3\Z$. Then $c(\phi_d) = 1$ unless $v(d)$ is even and positive. If $v(d)$ is even and positive, write $d = \varpi^{v(d)}u$ with $u\in\O_F\t$, and let $s =\gcd(3^m, v(d))$. Let $\kappa_u^s$ and $ \widehat\kappa_u^s$ be the classes defined in the previous section, and write $|\kappa_u^s|$ and $|\widehat\kappa_u^s|$ for their orders. Then $c(\phi_d)$ is as in Table $\ref{table:selmerratio}$.

\begin{table}[h]
	\caption{Values of $c(\phi_d)$ over local fields $F$, when $v(d)$ is even and positive}\label{table:selmerratio}
\begin{tabular}{|c|c|c|c|}
	\hline
&$d\in F^{\times2}$&$-3d\in F^{\times2}$&$d, -3d\notin F^{\times2}$\\
\hline
\rule{0pt}{3.75ex}
$\zeta_3\in F$&\multicolumn{2}{c|}{\begin{tabular}{l l}
$\dfrac{|\kappa_u^s|}{|\widehat\kappa_u^s|}$
\end{tabular}}& \begin{tabular}{l}
$1$
\end{tabular}\\[1.5ex]
\hline
\rule{0pt}{3.75ex}
$\zeta_3\notin F$&\begin{tabular}{l l}
$\dfrac{|\kappa_u^s|}{3}$
\end{tabular}&\begin{tabular}{l l}
	$\dfrac{3}{|\widehat\kappa_u^s|}$
\end{tabular}&$1$\\[1.5ex]
\hline
\end{tabular}
\end{table}
\end{theorem} 
\begin{proof}
If neither $d$ nor $-3d$ is a square in $F$, then $\#A_d[\phi_d](F) = 1$, and by \Cref{norms-dimension-table}, $\#\im(\partial_d) = 1$, so $c(\phi_d) = 1$ as claimed. Henceforth, we assume that either $d$ or $-3d$ is a square in $F$. When $v(d) = 0$, then $A_d$ has good reduction, so by \cite[Prop.\ 3.1]{shnidmanRM} 
\[c(\phi_d) = c(B_d)/c(A_d) = 1,\]
where $c(A_d)$ and $C(B_d)$ are the Tamagawa numbers of $A_d$ and $B_d$.
When $v(d)$ is odd, then $\im(\partial_d) = 0$ (\Cref{thm:soluble orbits}) and $A_d[\phi_d](F) = 0$, so again $c(\phi_d) = 1$. So it remains to compute $c(\phi_d)$ when $v(d)$ is even and positive. This is done by combining the formula for $\#\im\partial_d$ in   \Cref{thm:kummer-image}  with the fact that $\#A_d[\phi_d](F) = 3$ if $d$ is a square and $1$ otherwise. The result of this computation is \Cref{table:selmerratio}.
\end{proof}

\section{Selmer groups and integrality}\label{sec:selmer-groups-and-integrality}
We return to the global setting of the introduction, so that $F$ is a number field and $n = 3^m$ for some $m \geq 1$. Recall that $\zeta \in \overline F$ is a primitive $n$-th root of unity.  Let $\phi \colon A \to B$ be a $\zeta$-linear $3$-isogeny over $F$. Recall from \Cref{sec:zeta-mult} that there is a twist $\Aone$ of $A$, and an isogeny $\pi \colon A \to \Aone$, which becomes isomorphic to the endomorphism $1 - \zeta$ over $F(\zeta)$.

For any $F$-algebra $K$, define $\B(K) = K^\times/K^{\times 2n}$. The notation is meant to suggest that $\B$ is the classifying stack $B\mu_{2n}$.  For $d \in \B(F)$, let $\phi_d \colon A_d \to B_d$ be a twist of $\phi$ corresponding to 
\[d \in F^\times/F^{\times 2n} \simeq H^1(F, \mu_{2n}) \to H^1(F, \Aut_{\bar F}(\phi)),\]
as in \S\ref{subsec:twists}.  The Selmer group $\Sel_{\phi_d}(A_d)$ is the subgroup of $H^1(F, A_d[\phi_d])$ consisting of classes whose restriction lies in the image of the Kummer map
\[\partial_{d,v} \colon B_d(F_v)/\phi_d(A_d(F_v)) \hookrightarrow H^1(F_v,A_d[\phi_d])\]
for all places $v$ of $F$. Sometimes we use the notation $\Sel(\phi_d)$ instead of the more clunky $\Sel_{\phi_d}(A_d)$. 
 
The goal of this section is to compute the average size of $\Sel_{\phi_d}(A_d)$ as $d$ varies. The idea is to view Selmer elements as $\SL_2(F)$-orbits of binary cubic forms and then apply geometry-of-numbers counting techniques. To carry this out, we must show that the orbits corresponding to Selmer elements have representatives with bounded denominator. In fact, we will show that this boundedness only holds if $A[\phi]$ is almost everywhere locally a direct summand of $A[\pi]$.

\subsection{Integrality of Selmer elements}\label{subsect:selmerintegral}

We assume for simplicity that $A[\phi]\simeq\Z/3\Z$ as group schemes. This is not really a constraint, since there is always a quadratic twist of $A$ with this property.
This assumption implies that $A_d[\phi_d] \simeq C_d$, where $C_d$ is the order $3$ group scheme cut out by the quadratic field of discriminant $d$, from \Cref{sect: integral}. 

Recall the space $V$ of binary cubic forms from \Cref{sect: integral}. Recall also the set $V(F)_d$ of cubic forms of discriminant $d$, whose $\SL_2(F)$-orbits are in bijection with $H^1(F, C_d) \simeq H^1(F, A_d[\phi_d])$. Similarly, for each place $v$ of $F$, there is a bijection between $\SL_2(F_v)$-orbits on the set $V(F_v)_d$ and $H^1(F_v, A_d[\phi_d])$. Let $V(F_v)^\sol_d$ denote the subset of $V(F_v)_d$ corresponding to classes $\alpha \in H^1(F_v, A_d[\phi_d])$ in the image of $\partial_{d,v}$. Similarly, let $V(F)_d^\mathrm{sel}$ denote the subset of $V(F)_d$ corresponding to classes in $\Sel_{\phi_d}(A_d)$. Define 
\[V(F)^\mathrm{sel} = \bigcup_{0 \neq d \in \O_F} V(F)^\mathrm{sel}_d \hspace{4mm} \mbox{ and } \hspace{4mm} V(F_v)^\sol = \bigcup_{ d \in \O_v(2n)} V(F_v)^\sol_d.\]
where $\O_v$ is the ring of integers in $F_v$, and $\O_v(2n) = \{d \in \O_v : v(d) < 2n\}$. Similarly, define
\[V(F)^\mathrm{sel}_{\mathrm{sq. free}} = \bigcup_{0 \neq d \in \O_F \ \mathrm{sq.free}} V(F)^\mathrm{sel}_d \hspace{4mm} \mbox{ and } \hspace{4mm} V(F_v)^\sol_{\mathrm{sq. free}} = \bigcup_{ d \in \O_v(2)} V(F_v)^\sol_d,\]
where $\O_v(2) = \{d \in \O_v : v(d) < 2\}$ and the union on the left runs over elements $d\in\O_F$ that are squarefree. Of course, these sets depend on the initial choices of $A$ and $\phi$. 

In order to count the $\SL_2(F)$-orbits on $V(F)^\mathrm{sel}$ and $V(F)^\mathrm{sel}_{\mathrm{sq. free}}$ of bounded discriminant, we wish to prove that these orbits have representatives in $V(\O_F)$, or at least representatives with denominators that are uniformly bounded. Since $\SL_2$ has class number $1$, this is ultimately a local question, and it is enough to prove that for almost all $v$, each $\SL_2(F_v)$-orbit in $V(F_v)^\sol$ and $V(F_v)^\sol_{\mathrm{sq. free}}$ has a representative in $V(\O_v)$. For $V(F)^\mathrm{sel}_{\mathrm{sq. free}}$, this integrality holds without any conditions. However, for $V(F)^\mathrm{sel}$, we are forced to assume that $A[\phi]$ is almost everywhere locally a direct summand of $A[\pi]$, as defined in the introduction.

The following integrality result is crucial for the proofs of Theorems \ref{thm:avgSel} and \ref{thm:avgSel2} below.

\begin{theorem}\label{thm:integralityaxiom}
Let $v\nmid 6\infty$ be a place of $F$ at which $A$ has good reduction.
\begin{enumerate}
    \item Every element of $V(F_v)^\sol_{\mathrm{sq. free}}$ is $\SL_2(F_v)$-equivalent to an element of $V(\O_v)$.
    \item If $A[\phi]$ is a direct summand of $A[\pi]$ as a $G_{F_v}$-module, then every element of $V(F_v)^\sol$ is $\SL_2(F_v)$-equivalent to an element of $V(\O_v)$.
\end{enumerate}
\end{theorem}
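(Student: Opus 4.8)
The plan is to combine the two parallel analyses developed in the preceding sections: the classification of integral $\SL_2(F_v)$-orbits in $V(F_v)_d$ (\Cref{thm:integralorbits}) and the classification of soluble classes $\im(\partial_{d,v}) \subset H^1(F_v, A_d[\phi_d])$ (\Cref{thm:soluble orbits}). Under the bijection between $\SL_2(F_v)$-orbits on $V(F_v)_d$ and $H^1(F_v, C_d) \simeq H^1(F_v, A_d[\phi_d])$ of \Cref{prop:AIT}, the integral orbits correspond to $H^1_{\mathrm{int}}(F_v, C_d)$, so it suffices to prove, for all but finitely many $v$, the inclusion $\im(\partial_{d,v}) \subseteq H^1_{\mathrm{int}}(F_v, C_d)$ for every relevant $d$. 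Since $v \nmid 6\infty$ and $A$ has good reduction at $v$, the hypotheses of both theorems are in force (note $\charr\F > 3$ when $v \nmid 6$). The argument is then a case analysis on $v(d)$, matching up the descriptions case-by-case.

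**The case analysis.** First I would normalize $0 \le v(d) < 2n$, as in \Cref{sec:localselmer}, which is harmless since twisting by $d$ and by $d\cdot\varpi^{2n}$ gives the same isogeny. For $v(d) = 0$: \Cref{thm:soluble orbits}(a) gives $\im(\partial_{d,v}) = H^1_{\mathrm{un}}(F_v, A_d[\phi_d])$, and \Cref{thm:integralorbits}(a) gives $H^1_{\mathrm{int}}(F_v, C_d) = H^1_{\mathrm{un}}(F_v, C_d)$; these match exactly. For $v(d)$ odd: \Cref{thm:soluble orbits}(b) gives $\im(\partial_{d,v}) = 0$ and \Cref{thm:integralorbits}(b) shows the trivial (reducible) orbit is integral (the remark at the end of the proof of \Cref{thm:integralorbits}(b) covers this), so the inclusion is trivial. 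For $v(d) > 2$ even: \Cref{thm:integralorbits}(d) says \emph{all} classes are integral, so again there is nothing to prove. The only delicate case is $v(d) = 2$. Here \Cref{thm:integralorbits}(c) says the only non-integral classes are the nontrivial unramified ones, so $\im(\partial_{d,v}) \subseteq H^1_{\mathrm{int}}(F_v, C_d)$ holds if and only if $\im(\partial_{d,v})$ contains no nontrivial unramified class, i.e.\ $\im(\partial_{d,v}) \cap H^1_{\mathrm{un}}(F_v, A_d[\phi_d]) = 0$. When $v(d) = 2$ and $d \notin F_v^{\times 2}$ this holds automatically by \Cref{thm:soluble orbits}(c). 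When $v(d) = 2$ and $d \in F_v^{\times 2}$, we have $r = 0$ (since $\gcd(3^m, 2) = 1$), so \Cref{thm:soluble orbits}(d) together with \Cref{prop:units} says that $\im(\partial_{d,v}) \cap H^1_{\mathrm{un}} = 0$ if and only if $A[\phi]$ is a direct summand of $A[\pi]$ as a $G_{F_v}$-module.

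**Assembling the two parts.** For part (1), squarefree $d$ means $v(d) \in \{0,1\}$ for all finite $v$, so only the first two cases above arise, and the inclusion $\im(\partial_{d,v}) \subseteq H^1_{\mathrm{int}}(F_v, C_d)$ holds unconditionally; this gives that every element of $V(F_v)^{\sol}_{\mathrm{sq.\ free}}$ has an $\SL_2(F_v)$-representative in $V(\O_v)$. For part (2), the hypothesis is exactly that $A[\phi]$ is a $G_{F_v}$-direct summand of $A[\pi]$, which by the analysis of the $v(d) = 2$, $d \in F_v^{\times 2}$ case kills the only potential obstruction; all other values of $v(d)$ in the range $0 \le v(d) < 2n$ present no obstruction, so every element of $V(F_v)^{\sol}$ has an integral representative.

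**Expected main obstacle.** The genuinely substantive input is already packaged in \Cref{thm:soluble orbits}(d) (whose proof, via the Lemmas \ref{lem:torsion1}, \ref{lem:torsion} and the Frobenius-eigenspace argument, is the technical crux of \Cref{sec:localselmer}) and in \Cref{thm:integralorbits}(c)--(d). So at the level of this theorem, the main thing to be careful about is bookkeeping: making sure the normalization $0 \le v(d) < 2n$ is legitimate, that ``direct summand as a $G_{F_v}$-module'' in the hypothesis is literally the condition appearing in \Cref{thm:soluble orbits}(d) (using $r = 0$ and \Cref{prop:units} to eliminate the dependence on the unit part $u$), and that the passage from ``$\im(\partial_{d,v}) \subseteq H^1_{\mathrm{int}}$'' to ``every soluble orbit has an integral representative'' is exactly the content of \Cref{prop:AIT} combined with the definition of $H^1_{\mathrm{int}}$. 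No new ideas should be required beyond carefully lining up the two tables of cases.
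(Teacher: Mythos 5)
Your proposal is correct and is essentially the paper's own argument: the proof there is exactly the case-by-case comparison of \Cref{thm:integralorbits} with \Cref{thm:soluble orbits}, using \Cref{prop:units} to reduce the $v(d)=2$, $d\in F_v^{\times2}$ case to the direct-summand hypothesis on $A[\phi]\subset A[\pi]$. Your write-up simply makes explicit the bookkeeping that the paper leaves to the reader.
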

\begin{proof}
For each $d \in \O_v$, we must show that each class of $H^1(F_v, A_d[\phi_d]) \simeq H^1(F_v, C_d)$ that lies in the image of $\partial_{d,v}$ corresponds to an integral orbit of discriminant $d$. This follows from a comparison of \Cref{thm:integralorbits} with \Cref{thm:soluble orbits} and \Cref{rem:units}. 
\end{proof}

 \subsection{Average size of the Selmer group}\label{sec:avg-size-selmer}
 There is a natural height function on $\B(F)$ defined as follows. Let $M_\infty$ be the set of archimedean places of $F$. If $d \in \B(F)$ with lift $d_0 \in F^\times$, then define the ideal $I = \{a \in F \colon a^{2n} d_0 \in \O_F\}.$
The height of $d$ is then 
\[h(d) = \Nm(I)^{2n} \prod_{v \in M_\infty} |d_0|_v.\]
This is independent of the lift $d_0$, by the product formula. If $F = \Q$, then $h(d) = |d_0|$, where $d_0$ is the unique $2n$-th power free integer representing $d$. For any $X > 0$, there are finitely many $d \in \B(F)$ with $h(d) < X$. 
 
In order to state a robust version of Theorems \ref{thm:selmer} and \ref{thm:squarefreeselmer}, we recall from \cite{elk} the notion of functions on $F$ that are defined by local conditions. Let $F_\infty = \prod_{v \in M_\infty} F_v$. 
We say a function $\psi\colon F \to [0,1]$ is \emph{defined by local congruence conditions} if there exist local functions $\psi_v \colon F_v \to [0,1]$ for every finite place $v$ of $F$, and a function $\psi_\infty \colon F_\infty \to [0,1]$, such that the following two conditions hold:
\begin{enumerate}[label=(\arabic*), leftmargin=*]
\item For all $w \in F$, the product $\psi_\infty(w) \prod_{v \notin M_\infty} \psi_v(w)$ converges to $\psi(w)$. 
\item For each finite place $v$, and for $v = \infty$, the function $\psi_v$ is nonzero on some open set and locally constant outside some closed subset of $F_v$ of measure $0$.
\end{enumerate}
A subset of $F$ is said to be \emph{defined by local congruence conditions} if its characteristic function is defined by local congruence conditions. 

Let $\Sigma_0$ be a fundamental domain for the action of $F^\times$ on $F$ defined by $\alpha\cdot\beta = \alpha^{2n} \beta$. 
We may take $\Sigma_0$ so that it is defined by local congruence conditions. 
For any $X > 0$, let $F_X$ denote the set of $d \in F^\times$ such that $h(d) < X$. Then $\Sigma_0 \cap F_X$ is finite and we think of the abelian varieties $A_d$ as elements of $\Sigma_0$, so that the set of all $A_d$, with $d\in \B(F)$ and $h(d)<X$, is naturally in bijection with the finite set $\Sigma_0 \cap F_X$.

A \emph{family of twists $\{A_d\}$ defined by local congruence conditions} is then a subset $\Sigma \subset \Sigma_0$ defined by local congruence conditions. In that case, the characteristic function $\chi_{\Sigma}$ of $\Sigma$ factors as 
\[\chi_{\Sigma} = \chi_{\Sigma,\infty}\prod_{v \notin M_{\infty}} \chi_{\Sigma_v}.\] For each finite place $v$ of $F$, let $\Sigma_{v}$ be the subset of $F_v$ whose characteristic function is $\chi_{\Sigma_v}$, and let $\Sigma_{\infty}$ be the subset of $F_\infty$ whose characteristic function is $\chi_{\Sigma, \infty}$. 

We say that $\Sigma$ is \emph{large} if $\Sigma_{v}$ contains the set $\O_v(2) := \{d \in \O_v \colon v(d) < 2\}$ for all but finitely many finite places $v$, and if $\Sigma_{\infty}$ is a non-empty union of cosets in $\B(F_\infty)$. By construction, we have $\Sigma_{0,v} = \O_v(2n) \supset \O_v(2)$ for all finite $v$, so $\Sigma_0$ is itself large.

If $f$ is a positive function on $\B(F)$ and $\Sigma \subset \Sigma_0$, we write $\Sigma(X) = \Sigma \cap F_X$ and define
\[\displaystyle\mathrm{avg}_\Sigma \, f(d) = \lim_{X \to \infty}\frac{1}{\#\Sigma(X)}\displaystyle\sum_{d \in \Sigma(X)} f(d).\]
Our formula for $\avg_{\Sigma}\, \Sel_{\phi_d}(A_d)$ is most neatly formulated in terms of the global Selmer ratios $c(\phi_d)$ defined in the introduction, and first defined in \cite{elk}. 

\begin{theorem}\label{thm:avgSel}
Let $\Sigma$ be a large family of twists $A_d$. For each $k\in \Z$, let $T_k \subset \Sigma$ be the subset of $d \in \Sigma$ such that $c(\phi_d) = 3^k$. Assume either that $A[\phi]$ is almost everywhere locally a direct summand of $A[\pi]$ or that $\Sigma_v = \O_v(2)$ for all but finitely many $v$. If $T_k$ is non-empty, then \[\underset{d\in T_k}\avg \#\Sel_{\phi_d}(A_d) = 1 + 3^k.\]
\end{theorem}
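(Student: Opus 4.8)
The plan is to reduce the counting of Selmer elements to a lattice-point count on the representation $V = \Sym^3 \Z^2$ of $\SL_2$, following the strategy of \cite{bkls, elk}, and to use \Cref{thm:integralityaxiom} as the essential input that makes this count legitimate. First I would fix $k$ and restrict attention to $d \in T_k$. For each such $d$, the Selmer group $\Sel_{\phi_d}(A_d) \subset H^1(F, A_d[\phi_d]) \simeq H^1(F,C_d)$ is, via \Cref{prop:AIT}, a set of $\SL_2(F)$-orbits of binary cubic forms of discriminant $d$. The collection of these orbits, as $d$ ranges over $T_k$, assembles into the set $V(F)^{\mathrm{sel}}$ (or its squarefree variant); by \Cref{thm:integralityaxiom}, under either of the two hypotheses in the statement, for all but finitely many places $v$ every soluble $\SL_2(F_v)$-orbit contains an $\O_v$-point. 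Clearing denominators at the remaining finitely many bad places, one obtains that every $\SL_2(F)$-orbit in $V(F)^{\mathrm{sel}}$ has a representative in $\tfrac{1}{N}V(\O_F)$ for a fixed integer $N$, which is exactly the integrality needed to bring the orbit-counting machinery of \cite{BSW:globalI} to bear.

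The next step is the geometry-of-numbers count itself: one counts $\SL_2(\O_F)$-orbits (weighted appropriately by local masses) of integral binary cubic forms with discriminant in the relevant range and lying in the prescribed local conditions. The main term of this count is governed by a product of local orbit-counting densities. Here the key local computation is that, at a place $v$ where $A$ has good reduction and $v \nmid 6\infty$, the number of soluble $\SL_2(F_v)$-orbits of discriminant $d$ — equivalently $\#\im(\partial_{d,v})$ — has average (over $d\in\O_v$, ordered by valuation, inside the prescribed family) exactly matching the "$1 + c_v$" shape, so that after taking the product over all $v$ (including the finitely many bad places and the archimedean places, where the local Selmer ratios $c_v(\phi_d)$ contribute), the global main term collapses to $1 + 3^k = 1 + c(\phi_d)$ on $T_k$. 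The inputs for this are \Cref{thm:soluble orbits} and \Cref{thm:tamagawa-ratios}: the former identifies $\im(\partial_{d,v})$ with $H^1_{\mathrm{un}}$ (size dictated by the dimension tables) exactly when $v(d)$ is small, and with $0$ when $F_v(\sqrt d)/F_v$ is ramified, while the latter records the local ratios $c_v(\phi_d)$ that twist the count. One also needs a tail estimate showing that the contribution of forms of large discriminant at any fixed place, and the contribution of reducible orbits, is negligible in the average — these are the standard "uniformity estimates" and "reducibility is rare" lemmas, adapted to the global-field setting.

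Assembling these pieces, the count of $\SL_2(F)$-orbits in $\bigcup_{d\in T_k(X)} V(F)^{\mathrm{sel}}_d$ of bounded height is asymptotic to $(1 + 3^k)\cdot\#T_k(X)$, and dividing by $\#T_k(X)$ gives $\avg_{d\in T_k}\#\Sel_{\phi_d}(A_d) = 1 + 3^k$. (That $T_k(X)$ itself has positive density, so the average is well-defined and finite, follows from the fact that $c(\phi_d) = 3^k$ is itself a condition cut out by local congruence conditions of positive mass once $T_k \neq \emptyset$, together with $\Sigma$ being large.)

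I expect the main obstacle to be not the final bookkeeping but the \emph{integrality input} and its interaction with the counting: one must verify that \Cref{thm:integralityaxiom} genuinely covers every place and every residue class of $v(d)\bmod 2n$ that can occur for $d$ in the family $\Sigma$ — in particular the borderline case $v(d) = 2$, where \Cref{thm:integralorbits}(c) shows the non-trivial unramified orbits are \emph{not} integral, and this is precisely where the "$A[\phi]$ a local direct summand" hypothesis is forced (via \Cref{thm:soluble orbits}(d)): without it, as $d$ acquires more and more primes with $v(d) = 2$, the soluble orbits that are unramified-but-not-soluble-trivial are absent from $\im(\partial_{d,v})$ exactly when they would have been non-integral, so the soluble orbits that \emph{do} occur remain integral — but one must check this matching is exact, uniformly in $v$. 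Making this comparison airtight, and then confirming that the resulting local densities multiply to the clean answer $1 + 3^k$ rather than something correction-term-laden, is the delicate heart of the argument; the rest is an application of \cite{BSW:globalI} along the lines already established in \cite{bkls, elk}.
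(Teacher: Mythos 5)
Your proposal follows essentially the same route as the paper: Selmer elements are identified with $\SL_2(F)$-orbits of binary cubic forms, \Cref{thm:integralityaxiom} supplies the integrality needed to run the geometry-of-numbers count of \cite{BSW:globalI} as in \cite{elk}, and the local computations (\Cref{thm:soluble orbits}, \Cref{thm:tamagawa-ratios}) give local densities whose product collapses to $1+3^k$ on $T_k$. The one point to state more carefully is that $T_k$ is not itself cut out by finitely many local congruence conditions but is a countable disjoint union of large families, which --- exactly as you anticipate with your tail estimates --- the paper handles via the uniformity estimate \cite[Thm.\ 17]{BSW:globalI} and the argument of \cite[pg.\ 319--320]{elk}, after first proving the general formula of \Cref{thm:avgSel2} for large families.
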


When they are non-empty, the sets $T_k$ are countable disjoint unions of large sets. Using the uniformity estimate \cite[Thm.\ 17]{BSW:globalI} and copying the argument from \cite[pp.\ 319-320]{elk}, we deduce \Cref{thm:avgSel} from the following result. To state it, we define for any $d = (d_v)_{v\in M_\infty} \in \B(F_\infty)$,
\[c_\infty(\phi_d) := \prod_{v \in M_\infty} c_v(\phi_{d_v}).\] 
We also let $\overline\Sigma_\infty$ denote the image of $\Sigma_\infty$ in the finite group $\B(F_\infty)$. 

\begin{theorem}\label{thm:avgSel2}
Let $\Sigma$ be a large family of twists $A_d$. Assume either that $A[\phi]$ is almost everywhere locally a direct summand of $A[\pi]$ or that $\Sigma_v = \O_v(2)$ for all but finitely many $v$. Then 
\[\avg_{\Sigma}\, \#\Sel_{\phi_d}(A_d) = 1 + \dfrac{\displaystyle\int_{d \in \overline\Sigma_\infty} c_\infty(\phi_d)\mu_\infty(d)}{\displaystyle\int_{d \in \overline\Sigma_\infty}\mu_\infty(d)}\prod_{v \notin M_\infty} \dfrac{\displaystyle\int_{d \in \Sigma_v} c_v(\phi_d)\mu_v(d)}{\displaystyle\int_{d \in \Sigma_v} \mu_v(d)},\]
where $\mu_v$ is any Haar measure on $\O_v$ and $\mu_\infty$ is the uniform measure on $\B(F_\infty)$. 
\end{theorem}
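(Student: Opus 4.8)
The strategy is the standard geometry-of-numbers count, carried out on the representation $(\SL_2, \Sym^3)$, with the integrality input supplied by \Cref{thm:integralityaxiom}. First I would fix, for each $d$, the bijection between $\Sel_{\phi_d}(A_d)$ and the set of $\SL_2(F)$-orbits on $V(F)_d$ that are everywhere locally soluble, i.e. whose image in $H^1(F_v,A_d[\phi_d])$ lies in $\im(\partial_{d,v})$ for all $v$; this is \Cref{prop:AIT} together with the definition of the Selmer group. The trivial (reducible) orbit always contributes $1$, which accounts for the ``$1+$'' in the formula, so the content is to count the \emph{nontrivial} orbits, i.e. the irreducible binary cubic forms of discriminant $d$ (up to $\SL_2(F)$) that are everywhere locally soluble, as $d$ ranges over $\Sigma(X)$, and to show that on average there are $c_\infty(\phi)\prod_v c_v(\phi)$-many (in the normalized sense of the statement).

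**Reduction to an integral count.** The key preliminary step is to move from $\SL_2(F)$-orbits to $\SL_2(\O_F)$-orbits on a suitable lattice. By \Cref{thm:integralityaxiom}, under either hypothesis, for all but finitely many $v$ every locally soluble orbit over $F_v$ has a representative in $V(\O_v)$; at the finitely many bad places one clears denominators by a bounded amount. Hence every Selmer orbit is $\SL_2(F)$-equivalent to a form in $V(\O_F)$ (or with uniformly bounded denominator), and the problem becomes: count $\SL_2(\O_F)$-orbits on the set of integral binary cubic forms $f$ with $\Disc(f)\in\Sigma(X)$, weighted by the number of $\SL_2(F)$-orbits inside each $\SL_2(\O_F)$-orbit and by the local solubility conditions. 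I would then invoke the asymptotic count of \cite{BSW:globalI} for $\SL_2(\O_F)$-orbits on $\Sym^3\O_F^2$ of bounded discriminant (the global-field generalization of Davenport–Heath-Brown/Bhargava), together with the uniformity estimate \cite[Thm.\ 17]{BSW:globalI} to control the tail coming from forms whose discriminant is highly divisible at some prime — this is exactly where ``$\Sigma$ large'' and the absence-of-the-$v(d)=2$-obstruction (i.e. the direct-summand hypothesis, via \Cref{thm:integralorbits}(c) vs.\ \Cref{thm:soluble orbits}(d)) enter.

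**Sieving to the Selmer orbits.** With the raw count in hand, the final step is a local sieve: the main term from \cite{BSW:globalI} is an Euler product of local orbit-counting densities $\int_{V(\O_v)_{\Sigma_v}}(\text{something})$, and I would cut it down to the Selmer orbits by intersecting with $V(F_v)^{\sol}_d$ at every finite $v$ and with $\Sigma_\infty$ at the archimedean places. The point is that, for a fixed $d$, the density of $V(\O_v)^{\sol}_d$-orbits relative to all orbits of that discriminant equals $c_v(\phi_d) = \#\im(\partial_{d,v})/\#A_d[\phi_d](F_v)$ — this is just the statement that the local Selmer ratio measures the local solubility condition, and it is precisely the content of \Cref{thm:soluble orbits} and \Cref{thm:tamagawa-ratios}, which give both $\#\im(\partial_{d,v})$ and $c_v(\phi_d)$ explicitly. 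Integrating these local densities against Haar measure $\mu_v$ over $\Sigma_v$, and against $\mu_\infty$ over $\overline\Sigma_\infty$, and normalizing by the total measure (which is the denominator in the statement and also the normalization implicit in ``$\avg_\Sigma$''), reproduces exactly the claimed formula. One also checks the Euler product converges: $c_v(\phi_d)=1$ for $v\nmid 3\f_A\infty$ and for $v(d)$ odd or zero, so only finitely many factors differ from the ``expected'' value in a way that matters, and \Cref{thm:tamagawa-ratios} bounds the rest.

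**Main obstacle.** The genuinely hard part is not the sieve or the identification of the local densities — those are bookkeeping given the earlier sections — but the \emph{uniformity/tail estimate}: one must show that orbits of discriminant $d$ with $d$ very divisible at some prime $v$ (which is where integral representatives could fail to exist, or fail to be unique enough) contribute negligibly to the average. This is where the hypothesis that $A[\phi]$ is almost everywhere locally a direct summand of $A[\pi]$ (or that we restrict to squarefree $d$) is essential: as explained in the methods discussion, without it the cubic forms attached to Selmer elements acquire unboundedly large denominators as more primes divide $d$ to order $2$, and the geometry-of-numbers input breaks down. Concretely, I would combine \Cref{thm:integralityaxiom} with \cite[Thm.\ 17]{BSW:globalI} to bound the number of integral orbits with discriminant divisible by $p^2$ for $p$ in a growing range, and show this is $o(\#\Sigma(X))$ after weighting by Selmer densities; making this estimate uniform across the (infinitely many, but controlled) local conditions is the crux, and it is precisely the step copied from \cite[pg.\ 319--320]{elk} in passing from \Cref{thm:avgSel2} back to \Cref{thm:avgSel}. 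Modulo that, \Cref{thm:avgSel2} follows by assembling the local computations above.
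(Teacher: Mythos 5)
Your proposal follows essentially the same route as the paper: identify Selmer elements with locally soluble $\SL_2(F)$-orbits of binary cubic forms, invoke \Cref{thm:integralityaxiom} to get integral representatives away from finitely many places, and then run the geometry-of-numbers count and local sieve of \cite{elk} (via \cite{BSW:globalI}), with the tail/uniformity estimate, to produce the Euler product of local Selmer ratios. The only step the paper includes that you omit is the reduction when $A[\phi](F) \not\simeq \Z/3\Z$, handled by parameterizing Selmer elements by forms of discriminant $dk$ with $F(\sqrt{k}) = F(A[\phi])$, as in \cite{bkls}.
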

\begin{proof}
Assume at first that $A[\phi](F) \simeq \Z/3\Z$, as in \Cref{subsect:selmerintegral}. Then the left hand side is equal to the limit as $X \to \infty$ of the average number of $\SL_2(F)$-orbits of discriminant $d$ in $V(F)^\mathrm{sel}$, for $d$ in the finite set $\Sigma \cap F_X$. Given our key result \Cref{thm:integralityaxiom}, \Cref{thm:avgSel2} now follows exactly as in the proof of \cite[Thm.\ 11]{elk}, and we refer the reader there for the details. Note that two proofs were given in that paper, one in the case $F = \Q$ and one for general number fields $F$. The proof over general fields $F$ relies on the geometry-of-numbers machinery developed in the preprint \cite{BSW:globalII}, which has still not appeared. One can alternatively make use of the techniques in \cite{BSW:globalI}, again using \Cref{thm:integralityaxiom} as a key input, to deduce the formula for general $F$. Note that in \cite{BSW:globalI}, the authors count cubic extensions of $F$ with prescribed local conditions (ordered by discriminant), exactly by counting {\it integral} $\SL_2(F)$-orbits of binary cubic forms with prescribed local conditions. 

The proof in the general case where $A[\phi](F) \neq \Z/3\Z$ is exactly the same except we identify elements of $\Sel_{\phi_d}(A_d)$ with orbits of binary cubic forms of discriminant $dk$, where $k \in \O_F$ is chosen so that $F(\sqrt{k}) = F(A[\phi])$, as is done in \cite[\S4]{bkls}. \end{proof}

Taking $\Sigma = \Sigma_0$ in \Cref{thm:avgSel2}, we deduce \Cref{thm:selmer}, and taking $\Sigma = \{d : d\in \O_v(2)\ \forall v\}$, we deduce \Cref{thm:squarefreeselmer}.

\subsection{Explicit Selmer rank bounds}

The following consequence of \Cref{thm:avgSel} will be helpful in giving explicit average rank bounds.

\begin{proposition}\label{prop:tk-rank-bounds}
 Let $\phi\: A\to B$, $\Sigma$, and $T_k$ be as in Theorem $\ref{thm:avgSel}$. Then
 \begin{enumerate}
 \item For each $d \in F^\times$, we have $c(\phi_d) = \dfrac{\#\Sel(\phi_d)}{\#\Sel(\widehat\phi_d)} \cdot \dfrac{\#\widehat B_d[\widehat\phi_d](F)}{\# A_d[\phi_d](F)}.$
  \item If $T_k$ is non-empty, then it has positive density and 
  \[\underset{d\in T_k}{\avg} \dim_{\F_3}\Sel(\phi_d) \oplus \Sel(\widehat\phi_d) \leq |k| + 3^{-|k|}.\]
  \item For a proportion of at least $1 - \frac{1}{2\cdot3^{|k|}}$ of $d \in T_k$, we have $\dim_{\F_3} \Sel(\phi_d) \oplus \Sel(\widehat\phi_d) = |k|$.
 \end{enumerate}
\end{proposition}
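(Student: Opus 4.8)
\emph{For part (1),} the plan is to invoke the Greenberg--Wiles duality formula for the finite $G_F$-module $M := A_d[\phi_d]$ equipped with the Selmer system $\mathcal L_v := \im(\partial_{d,v}) \subseteq H^1(F_v, A_d[\phi_d])$. The Weil pairing identifies the Cartier dual of $M$ with $\widehat B_d[\widehat\phi_d]$, and under local Tate duality the annihilator of $\mathcal L_v$ is precisely the Kummer image of $\widehat\phi_d$, so the dual Selmer group is $\Sel(\widehat\phi_d)$. Greenberg--Wiles then gives
\[\frac{\#\Sel(\phi_d)}{\#\Sel(\widehat\phi_d)} \;=\; \frac{\#A_d[\phi_d](F)}{\#\widehat B_d[\widehat\phi_d](F)}\prod_v \frac{\#\im(\partial_{d,v})}{\#A_d[\phi_d](F_v)},\]
and the local factor is exactly $c_v(\phi_d)$ by the reformulation of the local Selmer ratio recorded in \Cref{subsec:local-ratios}, so the product over $v$ is $c(\phi_d)$; rearranging yields (1). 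Applying the same identity to $\widehat\phi$ (whose dual is $\phi$) and multiplying the two, the Selmer and torsion factors cancel, so $c(\widehat\phi_d) = c(\phi_d)^{-1}$ for every $d$. Hence $T_k$ for $\phi$ coincides with $T_{-k}$ for $\widehat\phi$, and we may and do assume $k \geq 0$ below, replacing $\phi$ by $\widehat\phi$ if necessary.

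\emph{Positive density.} When $T_k$ is non-empty it is a countable union of large sets (as recorded just before \Cref{thm:avgSel}), hence contains a large set; since every large set has positive density in the counting behind \Cref{thm:avgSel2}, $T_k$ has positive density and the averages below are defined. The key observation for what follows is that, by (1), for every $d$ outside the density-zero locus where $d$ or $-3d$ lies in $F^{\times 2}$ --- equivalently where $A_d[\phi_d](F)$ or $\widehat B_d[\widehat\phi_d](F)$ is nonzero --- formula (1) collapses to $\#\Sel(\phi_d) = 3^k\,\#\Sel(\widehat\phi_d)$. For such $d$ one gets $\dim_{\F_3}\Sel(\phi_d) \geq k$ and $\dim_{\F_3}\bigl(\Sel(\phi_d)\oplus\Sel(\widehat\phi_d)\bigr) = k + 2e_d$, where $e_d := \dim_{\F_3}\Sel(\phi_d) - k \geq 0$; on the exceptional locus one has merely $e_d \geq -1$ and these quantities change by at most $1$, which is harmless.

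\emph{Parts (2) and (3).} Now I would feed in \Cref{thm:avgSel}, which gives $\avg_{d\in T_k}\#\Sel(\phi_d) = 1 + 3^k$, hence $\avg_{d\in T_k}3^{e_d} = 3^{-k}(1+3^k) = 1 + 3^{-k}$. Since $\#\Sel(\phi_d)$ takes values in $3^{\Z_{\geq 0}}$ and this average is finite, there is no escape of mass, so the densities $p_j$ of $\{d \in T_k : e_d = j\}$ exist, vanish for $j<0$, and satisfy $\sum_j p_j = 1$ and $\sum_j 3^j p_j = 1 + 3^{-k}$. Using $3^j \geq 1 + 2j$ for integers $j \geq 0$ and $p_j \leq j p_j$ for $j \geq 1$, these identities force $\sum_j j p_j \leq \tfrac12\,3^{-k}$ and $\sum_{j \geq 1} p_j \leq \sum_j j p_j \leq \tfrac12\,3^{-k}$. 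The first bound gives (2): $\avg_{d\in T_k}\dim_{\F_3}(\Sel(\phi_d)\oplus\Sel(\widehat\phi_d)) = k + 2\sum_j j p_j \leq k + 3^{-k} = |k| + 3^{-|k|}$ (the density-zero exceptional locus not affecting the average). The second bound gives (3): for a proportion $p_0 \geq 1 - \tfrac{1}{2\cdot 3^{|k|}}$ of $d \in T_k$ we have $e_d = 0$, hence $\dim_{\F_3}(\Sel(\phi_d)\oplus\Sel(\widehat\phi_d)) = k$.

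\emph{Main obstacle.} The two substantive ingredients --- the Greenberg--Wiles formula for (1) and \Cref{thm:avgSel} for the average --- are imported, so the proposition itself is an essentially formal consequence; the only real care needed is bookkeeping. One must place the rational-torsion factors in (1) on the correct side, so that the symmetry $c(\widehat\phi_d) = c(\phi_d)^{-1}$ comes out cleanly (this is what permits the reduction to $k \geq 0$), and one must confirm that the $d$ with $A_d[\phi_d](F) \neq 0$ or $\widehat B_d[\widehat\phi_d](F) \neq 0$ really do form a density-zero subfamily --- they are cut out by $d \in F^{\times 2}$ or $-3d \in F^{\times 2}$, i.e.\ by sparse square-class conditions --- so that they can be discarded without affecting either the average in (2) or the proportion in (3).
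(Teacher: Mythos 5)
Your overall route is the same as the paper's: part (1) is proved exactly as in the paper (the Kummer images for $\phi_d$ and $\widehat\phi_d$ are dual local conditions, and Greenberg--Wiles gives the displayed identity, the local factors being the $c_v(\phi_d)$), and parts (2)--(3) combine (1) with \Cref{thm:avgSel} and a convexity bound, after discarding the locus of $d$ where $A_d[\phi_d](F)$ or $\widehat B_d[\widehat\phi_d](F)$ is nonzero. Part (3) is essentially correct, but you should not assert that the densities $p_j$ exist: convergence of the average of $\#\Sel(\phi_d)$ does not give existence of the individual limiting densities, and the paper is careful here, working with the $\liminf$ $s_0$ of the density of $\{d\in T_k:\#\Sel(\widehat\phi_d)=1\}$. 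Your inequalities survive this repair, because for the upper bounds you only restrict a nonnegative sum to the ``good'' set (which can only decrease it), and the excluded locus costs only density zero, not an average, in (3).

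The genuine gap is in part (2), precisely at the two phrases ``there is no escape of mass, so the densities $p_j$ exist'' and ``(the density-zero exceptional locus not affecting the average)''. Finiteness of $\avg_{d\in T_k}\#\Sel(\phi_d)$ does not prevent a density-zero subset from carrying a positive fraction of the mass, and the quantity you average in (2), $\dim_{\F_3}\Sel(\phi_d)\oplus\Sel(\widehat\phi_d)$, is unbounded on the exceptional locus (a union of at most two square classes in $F^\times/F^{\times 2n}$). From $\avg_{T_k}\#\Sel(\phi_d)=1+3^k$ alone one only gets that this locus contributes at most $3^k+o(1)$ to the average of the dimension, not $o(1)$, so your bound $k+3^{-k}$ does not follow as written. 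Some tail estimate is genuinely needed here; the paper delegates (2) to \cite[Thm.\ 50]{elk}, where the analogous exceptional square classes are controlled by the uniformity estimates underlying the counting theorems (as in \cite{BSW:globalI}), showing their Selmer mass is negligible. Relatedly, note that both your reduction to $k\geq 0$ and the paper's cleaner route of quoting $\avg_{T_k}\#\Sel(\widehat\phi_d)=1+3^{-k}$ amount to applying \Cref{thm:avgSel} to $\widehat\phi$; you instead transfer the average from $\phi$ through the good set, which is harmless in the direction needed for (3) but is exactly the step that cannot absorb the exceptional locus in (2).
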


\begin{proof}
Since $\phi$ and $\widehat \phi$ determine dual local conditions in their respective Selmer groups \cite[B.1]{Cesnavicius}, the Greenberg--Wiles formula \cite[8.7.9]{NSW:cohomologyofnumberfields} applies and gives $(i)$. 
The argument for $(ii)$ is then exactly the same as \cite[Thm.\ 50]{elk}. 

For $(iii)$, we may assume $k \geq 0$, by switching $\phi$ and $\widehat\phi$ if necessary. By $(i)$, we have $\dim\Sel(\phi_d) + \dim\Sel(\widehat\phi_d) = k$ if and only if $\dim\Sel(\widehat\phi_d) =0$, at least for the $100\%$ of $d$ such that $\#\widehat B_d[\widehat\phi_d](F) = \# A_d[\phi_d](F) = 1$. By \Cref{thm:avgSel}, the average size of $\Sel(\widehat\phi_d)$ for $d\in T_k$ is $1 + 3^{-k}$. Hence, if $s_0$ is the $\liminf$ of the natural density of $d\in T_k$ with $\#\Sel(\widehat\phi_d) = 1$, then
 \[s_0 + 3(1-s_0) \leq 1 + 3^{-k},\]
 and hence $s_0 \geq 1 - \frac{1}{2\cdot3^{|k|}}$.
\end{proof}

\begin{proposition}\label{prop:Selmer-rank-explicit-bound}
    Let $\phi\: A\to B$ and $T_k$ be as in Theorem $\ref{thm:avgSel}$, with $\Sigma$ the set of squarefree twists. Let $S$ be the set of places of $F$ dividing  $3\f_A\infty$, where $\f_A$ is the conductor of $A$. Then $T_k = \emptyset$ if $|k|> \#S$.  As a consequence, we have $\avg_\Sigma\dim_{\F_3}\Sel(\phi_d) \oplus \Sel(\widehat\phi_d) \le  \#S + 3^{-\#S}.$
\end{proposition}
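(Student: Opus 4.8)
The plan is to bound the Selmer ratio $c(\phi_d)$ place by place and then invoke \Cref{thm:avgSel}. Recall that $c(\phi_d) = \prod_v c_v(\phi_d)$ and that each local factor $c_v(\phi_d)$ lies in $3^{\Z}$. First I would observe that, for $v \nmid 3\f_A\infty$, the abelian variety $A$ has good reduction at $v$ and $v \nmid 6\infty$, so \Cref{thm:tamagawa-ratios} applies: since $\Sigma$ consists of squarefree twists, we have $0 \le v(d) \le 1$ for every such $d$, hence $v(d)$ is never even and positive, and therefore $c_v(\phi_d) = 1$. Thus the only places that can contribute a nontrivial factor to $c(\phi_d)$ are those in $S$, i.e.\ the places dividing $3\f_A\infty$.

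Next I would extract the consequence that $c(\phi_d) = \prod_{v \in S} c_v(\phi_d)$. To bound the contribution of each $v \in S$, recall that $c_v(\phi_d) = \#\im(\partial_{d,v})/\#A_d[\phi_d](F_v)$, and since $A_d[\phi_d] \simeq C_d$ is an order-$3$ group scheme, $\#A_d[\phi_d](F_v) \in \{1,3\}$ and $\#\im(\partial_{d,v}) \le \#H^1(F_v, A_d[\phi_d])$, which by \Cref{norms-dimension-table} is at most $3^2$ when $\zeta_3 \in F_v$. In fact the standard local Euler characteristic / Tate duality bound gives $c_v(\phi_d) = 3^{k_v}$ with $|k_v| \le 1$ at each finite place (one gets a cleaner bound by comparing $\#\im(\partial_{d,v})$ for $\phi_d$ and $\widehat\phi_d$ via local Tate duality, which forces $|k_v|\le 1$ away from $v\mid\infty$, and $c_v=1$ at real or complex places since $B_d(F_v)\to$ its image is surjective on the relevant $3$-primary part — or more simply, $c_\infty(\phi_d) \le 1$). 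Summing the exponents, $|k| = |v(c(\phi_d))/\log 3| \le \sum_{v \in S}|k_v| \le \#S$. Hence if $|k| > \#S$ there is no $d$ with $c(\phi_d) = 3^k$, i.e.\ $T_k = \emptyset$.

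For the final assertion, I would apply \Cref{prop:tk-rank-bounds}(ii): for each $k$ with $T_k \ne \emptyset$ (so $|k| \le \#S$) we have $\avg_{d \in T_k}\dim_{\F_3}\Sel(\phi_d)\oplus\Sel(\widehat\phi_d) \le |k| + 3^{-|k|}$. The function $t \mapsto t + 3^{-t}$ is increasing for $t \ge 0$, so this is maximized over the allowed range at $|k| = \#S$, giving the bound $\#S + 3^{-\#S}$ on each piece $T_k$. Since $\Sigma = \bigsqcup_{|k| \le \#S} T_k$ up to the density-zero set where some $T_k$ is empty, and each average over $T_k$ is bounded by $\#S + 3^{-\#S}$, the weighted average over all of $\Sigma$ satisfies the same bound. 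This gives $\avg_\Sigma \dim_{\F_3}\Sel(\phi_d)\oplus\Sel(\widehat\phi_d) \le \#S + 3^{-\#S}$.

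The main obstacle is the uniform bound $|k_v| \le 1$ at the finitely many bad places $v \in S$ — in particular at $v \mid 3$, where $A$ need not have good reduction and \Cref{thm:tamagawa-ratios} does not apply directly. The cleanest route around this is to invoke local Tate duality together with \Cref{prop:tk-rank-bounds}(i) for the completed fields (or the local Greenberg--Wiles identity), which shows $c_v(\phi_d)\cdot c_v(\widehat\phi_d) = \#\widehat B_d[\widehat\phi_d](F_v)/\#A_d[\phi_d](F_v)$ is bounded independently of $d$, forcing $|v(c_v(\phi_d))| \le 1$ at every finite $v$; one also needs $c_v(\phi_d)=1$ at the archimedean places for squarefree $d$, which follows since over $\R$ or $\C$ the relevant cokernel of $A_d(F_v)\to B_d(F_v)$ vanishes on $3$-primary torsion. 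Once this uniform local bound is in hand, the rest is the elementary monotonicity argument above.
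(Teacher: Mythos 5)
Your outer structure (killing the local factors away from $S$ via \Cref{thm:tamagawa-ratios} and squarefreeness, then feeding a bound on $|k|$ into \Cref{prop:tk-rank-bounds}(ii) and a monotonicity/averaging argument) is the same as the paper's. The gap is the uniform local bound $|k_v|\le 1$ at every finite place, which is false at places $v\mid 3$, and the identity you propose to justify it is incorrect: local Tate duality says that $\im(\partial_{d,v})$ for $\phi_d$ and $\widehat\phi_d$ are exact annihilators, which together with the local Euler characteristic gives
\[c_v(\phi_d)\,c_v(\widehat\phi_d)=\frac{\#H^1(F_v,A_d[\phi_d])}{\#A_d[\phi_d](F_v)\,\#\widehat B_d[\widehat\phi_d](F_v)}=3^{[F_v:\Q_3]}\]
for finite $v\mid 3$ (compare the paper's \Cref{lem:3adicduality}, where the product is $\ell$, not $1$). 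So at $v\mid 3$ one only gets $c_v(\phi_d)\le 3^{1+[F_v:\Q_3]}$, and summing exponents place by place yields $|k|\le \#S+\sum_{v\mid 3}[F_v:\Q_3]$, strictly weaker than the claimed $|k|\le\#S$ (already over $\Q$ your count would give $\#S+1$). Your archimedean claim $c_v(\phi_d)=1$ is also slightly off --- the cokernel has trivial $3$-part but the kernel need not, so $c_v\in\{1,\tfrac13\}$ --- though since the archimedean places lie in $S$ this particular point is harmless.

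The paper circumvents any upper bound on individual local factors. It uses only the trivial lower bound $c_v(\phi_d)\ge \tfrac13$, valid at \emph{every} place because the kernel of $A_d(F_v)\to B_d(F_v)$ has at most $3$ elements, so $c(\phi_d)=\prod_{v\in S}c_v(\phi_d)\ge 3^{-\#S}$ and likewise $c(\widehat\phi_d)\ge 3^{-\#S}$; then the \emph{global} Greenberg--Wiles identity of \Cref{prop:tk-rank-bounds}(i), for the $100\%$ of squarefree $d$ with $A_d[\phi_d](F)=0=\widehat B_d[\widehat\phi_d](F)$, gives $c(\phi_d)c(\widehat\phi_d)=1$, and the lower bound for the dual converts into the upper bound $c(\phi_d)\le 3^{\#S}$. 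The point is that the Euler-characteristic defects at $v\mid 3$ cancel against the archimedean contributions only in the global product, not place by place, which is exactly what your local argument cannot see. If you replace your local step by this global one, the remainder of your write-up (monotonicity of $t\mapsto t+3^{-t}$ and averaging over the finitely many nonempty $T_k$) goes through as in the paper.
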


\begin{proof}
    If $v\nmid 3$ is a prime of good reduction, then by \Cref{thm:tamagawa-ratios} and the assumption that $d$ is squarefree, $c_v(\phi_d) = 1$. On the other hand, directly from the definition, we see that $c_v(\phi_d) \geq \frac13$ for any $v$. Hence,
    \[c(\phi_d) = \prod_{v\mid 3\f_A\infty}c_v(\phi_d) \geq \prod_{v\mid 3f_A\infty}\frac13 = 3^{-\#S}.\]
    For almost all $d \in \Sigma$, we have $A_d[\phi_d](F) =0 = \widehat B_d[\widehat\phi_d](F)$, in which case \Cref{prop:tk-rank-bounds}(i) gives
     \[c(\phi_d)c(\widehat\phi_d) = \dfrac{\#\Sel(\widehat\phi_d)}{\#\Sel(\phi_d)}\cdot \dfrac{\#\Sel(\phi_d)}{\#\Sel(\widehat\phi_d)} = 1.\]
    By the above, we also have $c(\widehat\phi_d) \geq 3^{-\#S}$, and hence $c(\phi_d) \leq 3^{\#S}$. Since $3^{-\#S} \leq c(\phi_d) \leq 3^{\#S}$, we have $T_k = \emptyset$ if $|k | >  \#S$. The second claim now follows from \Cref{prop:tk-rank-bounds}$(ii)$.
    \end{proof}

\section{The average rank is bounded in cyclotomic twist families}\label{sec:twists-of-ab-vars}

In this section, we apply Theorems \ref{thm:selmer} and \ref{thm:squarefreeselmer} to prove Theorems \ref{thm:completely-reducible} and \ref{thm:explicit-rank-bound-intro}, i.e.\ to bound the average Mordell--Weil rank of $A_d(F)$.

\begin{proof}[Proof of Theorem $\ref{thm:completely-reducible}(ii)$]
 Since $A[\pi]$ admits a full flag, by \Cref{lem:tensor-cyclotomic}, so does $\Ai[\pi]$ for each $i = 1, \ldots, 2\cdot 3^{m-1}-1$. Hence, for each $i$, there is a sequence of $\zeta$-linear $3$-isogenies
	\[A^{(i)}=B_0^{(i)}\xrightarrow{\phi^{(i)}_1} B_1^{(i)}\to \cdots\to B_{k-1}^{(i)} \xrightarrow{\phi^{(i)}_k} B_k^{(i)} = A^{(i+1)}.\]
	By \Cref{thm:squarefreeselmer}, for each $i,j$, the average rank of $\Sel(\phi^{(i)}_{j,d})$, for squarefree $d\in F^\times/F^{\times 2n}$ is bounded.  
	
	Recall \cite[Lemma 9.1]{bkls}, that if $\psi_1\: A_1\to A_2$ and $\psi_2\:A_2\to A_3$ are isogenies of abelian varieties, then there is an exact sequence
	\begin{equation}\label{eq: selmer-ses}
	 \Sel_{\psi_1}(A_1)\to \Sel_{\psi_2\circ\psi_1}(A_1)\to \Sel_{\psi_2}(A_2).
	\end{equation}
	
	Hence, for each $d$, we have
	\begin{align*}
	\rk A_d(F) &\le\dim_{\F_3}\frac{A_d(F)}{3A_d(F)}\le \dim_{\F_3}\Sel_3(A_d)\\
	&\le \sum_{i =0}^{2\cdot 3^{m-1}-1} \sum_{j=1}^k\dim_{\F_3}\Sel(\phi^{(i)}_{j,d})\\
	&\le\sum_{i = 0}^{2\cdot 3^{m-1}-1} \sum_{j=1}^k\#\Sel(\phi^{(i)}_{j,d}).
	\end{align*}
	Here, the second inequality follows inductively from \Cref{eq: selmer-ses}, and the third inequality follows from the trivial inequality $k\le 3^k$ for all $k\iZ$. Taking the average over $d$, the result follows from \Cref{thm:squarefreeselmer}.
\end{proof}

In the remainder of this section, we prove \Cref{thm:completely-reducible}$(i)$. The proof just given does not apply: by assumption, $B_0^{(i)}[\phi_1^{(i)}]$ is a direct summand of $\Ai[\pi]$, however, there is no reason that $B_{j-1}^{(i)}[\phi_{j}^{(i)}]$ should be a direct summand of $B_{j-1}^{(i)}[\pi]$. Hence, we cannot directly apply \Cref{thm:selmer} to get the result. Instead, we exploit the fact that we can take the isogenies $\phi_{j}^{(i)}$ in any order, together with the following dual version of \Cref{thm:selmer}:

\begin{corollary}\label{lem:last-map-bounded}
	Let $B, C$ be abelian varieties over $F$ with $\zeta$-multiplication and a $\zeta$-linear $3$-isogeny $\phi\:B\to C$ defined over $F$. Suppose that $\widehat C[\widehat\phi]$ is almost everywhere locally a direct summand of $\widehat C[\pi]$. Then the average size of the Selmer group $\Sel(\phi_d)$ is bounded. 
\end{corollary}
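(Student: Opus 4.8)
The plan is to deduce the statement from \Cref{thm:selmer} applied to the dual isogeny $\widehat\phi\colon\widehat C\to\widehat B$, and then transfer the conclusion back to $\phi$ using duality. First, $\widehat B$ and $\widehat C$ inherit $\zeta$-multiplications from $B$ and $C$ via $\iota_{\widehat X}(\alpha):=\widehat{\iota_X(\alpha)}$, which is a $G_F$-equivariant ring embedding $\Z[\zeta]\hookrightarrow\End_{\bar F}\widehat X$ since $\Z[\zeta]$ is commutative and dualization is contravariant; it then follows directly from the definitions that $\widehat\phi$ is a $\zeta$-linear $3$-isogeny. Under these identifications the group scheme $\widehat C[\pi]$ is defined, and the hypothesis that $\widehat C[\widehat\phi]$ is almost everywhere locally a direct summand of $\widehat C[\pi]$ is verbatim the hypothesis of \Cref{thm:selmer} for $\widehat\phi$. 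In particular, applying \Cref{thm:avgSel} to $\widehat\phi$ with $\Sigma=\Sigma_0$, on each nonempty $T_k:=\{d\in F^\times/F^{\times2n}:c(\widehat\phi_d)=3^k\}$ we have $\avg_{d\in T_k}\#\Sel(\widehat\phi_d)=1+3^k$.

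Next I would relate $\#\Sel(\phi_d)$ to $\#\Sel(\widehat\phi_d)$. Applying the Greenberg--Wiles formula of \Cref{prop:tk-rank-bounds}$(i)$ both to $\phi$ and to $\widehat\phi$ and multiplying the two identities, all of the Selmer and rational-torsion factors cancel, leaving $c(\phi_d)c(\widehat\phi_d)=1$ for every $d$; feeding this back into the formula for $\phi$ gives
\[\#\Sel(\phi_d)=\frac{\#\Sel(\widehat\phi_d)}{c(\widehat\phi_d)}\cdot\frac{\#B_d[\phi_d](F)}{\#\widehat C_d[\widehat\phi_d](F)},\]
where the last factor lies in $\{\tfrac13,1,3\}$ once we reduce to the case $B[\phi]\simeq\Z/3\Z$, which we may do after a quadratic twist as in \Cref{subsect:selmerintegral}. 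Hence on $T_k$ we get $\#\Sel(\phi_d)\le 3^{1-k}\,\#\Sel(\widehat\phi_d)$, so $\avg_{d\in T_k}\#\Sel(\phi_d)\le 3^{1-k}(1+3^k)=3+3^{1-k}$. Summing over $k$ (the sets $T_k$ partition the family, and the interchange is justified exactly as in the passage from \Cref{thm:avgSel} to \Cref{thm:selmer}), and using $c(\phi_d)=c(\widehat\phi_d)^{-1}=3^{-k}$ on $T_k$,
\[\avg_d \#\Sel(\phi_d)\;\le\;3+3\sum_k\mathrm{dens}(T_k)\,3^{-k}\;=\;3+3\avg_d c(\phi_d).\]

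So it suffices to know that $\avg_d c(\phi_d)<\infty$, and this holds for \emph{every} $\zeta$-linear $3$-isogeny, with no reducibility hypothesis: by \Cref{thm:tamagawa-ratios}, for all places $v$ outside a fixed finite set one has $c_v(\phi_d)=1$ unless $v(d)$ is even and positive, in which case $c_v(\phi_d)\le 3$. Since the set of $d$ with $v(d)$ even and positive has density $O(\Nm(v)^{-2})$ and $\sum_v\Nm(v)^{-2}<\infty$, the Euler product over $v$ of the local averages of $\max(c_v(\phi_d),1)$ converges, and it dominates $\avg_d c(\phi_d)$; this finishes the proof. The only step that is not bookkeeping is this last one: \Cref{thm:selmer} cannot be applied to $\phi$ directly, so one has to argue by hand that the ``defect term'' $c(\phi_d)=c(\widehat\phi_d)^{-1}$ averages to something finite --- equivalently, that the anomalous local Selmer ratios are supported on a summable set of places.
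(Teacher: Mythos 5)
Your proposal is correct and follows essentially the same route as the paper: dualize, apply \Cref{thm:selmer}/\Cref{thm:avgSel} to $\widehat\phi$ (whose hypothesis is exactly the assumption on $\widehat C[\widehat\phi]\subset\widehat C[\pi]$), use Greenberg--Wiles to get $c(\phi_d)=c(\widehat\phi_d)^{-1}$ and hence transfer the average of $\#\Sel(\widehat\phi_d)$ on each $T_k$ to $\#\Sel(\phi_d)$, and conclude by the convergence of the Euler-product expression controlling $\avg_d c(\phi_d)$. The only (harmless) deviations are cosmetic: you bound the rational-torsion ratio uniformly by $3$ where the paper discards the two density-zero square-classes on which it is nontrivial, and you spell out the local estimates behind the final convergence step that the paper cites more tersely.
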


\begin{proof}
 Let $\widehat\phi \colon \widehat C \to \widehat B$ be the dual isogeny, which is itself a $\zeta$-linear $3$-isogeny, with respect to the natural $\zeta$-multiplication structure on $\widehat C$ and $\widehat B$. Let $\B = \B_{2n}$ as in \Cref{sec:selmer-groups-and-integrality}. For each $d \in \B(F)$, \Cref{prop:tk-rank-bounds}$(i)$ gives \[\dfrac{\#\Sel(\phi_d)}{\#\Sel(\widehat\phi_d)} = \dfrac{\#B_d[\phi_d](F)}{\#\widehat C_d[\widehat\phi_d](F)}c(\phi_d).\]
The classes $d$ for which $B_d[\phi_d](F) = \Z/3\Z$ or $\widehat C_d[\widehat\phi_d](F)=\Z/3\Z$, form a union of at most two square-classes in $\B(F)$, so we can ignore these values of $d$ when trying to bound the average size of $\Sel(\phi_d)$. In any case, up to a harmless factor of 3, the formula reads
\[\dfrac{\#\Sel(\phi_d)}{\#\Sel(\widehat\phi_d)} = c(\phi_d).\]
By \Cref{thm:selmer} applied to $\widehat\phi$, the average size of $\#\Sel(\widehat\phi_d)$ is bounded. In fact, if we restrict to the set $T_k = T_k(\widehat\phi) \subset \B(F)$ where $c(\widehat\phi_d)= 3^k$, then the average size is equal to $1 + 3^{k}$ by \Cref{thm:avgSel}. Since $c(\phi_d) = 1/c(\widehat\phi_d)$, the average size of $\Sel(\phi_d)$ is equal to $3^{-k} + 1$, and in particular converges, at least on this set $T_k$. 

To see that the average size of $\#\Sel(\phi_d)$ converges (to the expected number) on all of $\B(F)$, we can argue as in \cite[\S6.4]{elk}, using the uniformity estimate \cite[Prop.\ 29]{BST} to give a tail bound for those binary cubic forms with large square part in their discriminant. The uniformity estimate applies to elements of $\#\Sel(\widehat{\phi}_d)$ since, under the hypotheses of the corollary, they are represented by integral binary cubic forms. However, we need to bound the average size of $c(\phi_d)\#\Sel(\widehat{\phi}_d)$ and not $\#\Sel(\widehat{\phi}_d)$, so we also need to control the size of $c(\phi_d)$. By \Cref{table:selmerratio}, we have $c(\phi_d) = O(3^m)$, where $m$ is the number of primes $v$ of $F$ with $v(d)$ even and positive. Moreover, each element of $\Sel(\widehat{\phi}_d)$ corresponds to a binary cubic form whose discriminant has norm divisible by $q_v^2$, for all such $v$. Since 
\[3^m\prod_{v(d) = 2} q_v^{-2} \gg  3q_{v_1}^{-2}\gg q_{v_1}^{-2}, \]
the same argument as in loc.\ cit.\ goes through. We refer there for the details.
\end{proof}

\begin{lemma}\label{injection-of-selmer}
  Let $F$ be any field and suppose there is a commutative diagram of isogenies of abelian varieties over $F$,
	\begin{center}
		\begin{tikzcd}
A \arrow[r, "\phi_1"] \arrow[d,"\phi_2"] & B_1 \arrow[d, "\psi_2"] \\
		B_2 \arrow[r, "\psi_1"]       & {C}
		\end{tikzcd}\end{center}
	such that $\phi_2$ maps $A[\phi_1]$ isomorphically onto $B_2[\psi_1]$. Then $\psi_2$ induces an injection
	\[\frac{B_1(F)}{\phi_1(A(F))}\hookrightarrow\frac{C(F)}{\psi_1(B_2(F))}.\]
	In particular, there is an embedding $\Sel_{\phi_1}(A)\hookrightarrow\Sel_{\psi_1}(B_2)$.
\end{lemma}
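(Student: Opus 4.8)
The plan is to prove the Mordell--Weil statement first, by a short diagram chase followed by a Galois descent, and then to deduce the Selmer embedding by assembling the given data into a morphism of short exact sequences and checking that local conditions are respected. Write $\rho=\psi_2\circ\phi_1=\psi_1\circ\phi_2\colon A\to C$ for the common composite. The one structural fact I would isolate at the outset is that the hypothesis forces $A[\phi_1]\cap A[\phi_2]=0$, as subgroup schemes of $A$ (hence as $G_F$-modules): indeed $\phi_2$ restricted to $A[\phi_1]$ is a monomorphism (it is an isomorphism onto $B_2[\psi_1]$), so its kernel $A[\phi_1]\cap A[\phi_2]$ is trivial. This rigidity is what makes the descent work.

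Next I would check that $\psi_2$ induces a well-defined map $\bar\psi_2\colon B_1(F)/\phi_1(A(F))\to C(F)/\psi_1(B_2(F))$: if $b_1=\phi_1(a)$ with $a\in A(F)$, then $\psi_2(b_1)=\psi_1(\phi_2(a))\in\psi_1(B_2(F))$. For injectivity, suppose $b_1\in B_1(F)$ and $\psi_2(b_1)=\psi_1(b_2)$ for some $b_2\in B_2(F)$. Pick $a\in A(\overline F)$ with $\phi_1(a)=b_1$, which is possible since $\phi_1$ is surjective on $\overline F$-points. Then $\psi_1(\phi_2(a)-b_2)=\psi_2(b_1)-\psi_1(b_2)=0$, so $\phi_2(a)-b_2\in B_2[\psi_1](\overline F)$, and by hypothesis there is a (unique) $t\in A[\phi_1](\overline F)$ with $\phi_2(t)=\phi_2(a)-b_2$. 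Setting $a'=a-t$, we get $\phi_1(a')=b_1$ and $\phi_2(a')=b_2$, both $F$-rational. For $\sigma\in G_F$ the element $a'^{\sigma}-a'$ lies in $A[\phi_1]\cap A[\phi_2]=0$, so $a'\in A(F)$ and $b_1=\phi_1(a')\in\phi_1(A(F))$. This proves the first assertion.

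For the Selmer statement, I would observe that the data of the square, together with the restriction of $\phi_2$ to kernels, is precisely a morphism of short exact sequences of $G_F$-modules
\[
\begin{array}{ccccccccc}
0 &\to& A[\phi_1] &\to& A &\xrightarrow{\ \phi_1\ }& B_1 &\to& 0\\[2pt]
 && \big\downarrow && \big\downarrow && \big\downarrow &&\\[2pt]
0 &\to& B_2[\psi_1] &\to& B_2 &\xrightarrow{\ \psi_1\ }& C &\to& 0,
\end{array}
\]
where the left vertical map is $\phi_2|_{A[\phi_1]}$, the middle is $\phi_2$, the right is $\psi_2$, and commutativity of the right square is exactly $\psi_2\phi_1=\psi_1\phi_2$. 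Since the left vertical map is an isomorphism, so is the induced map $\phi_{2,*}\colon H^1(F,A[\phi_1])\xrightarrow{\ \sim\ }H^1(F,B_2[\psi_1])$, and likewise over each completion $F_v$. Naturality of connecting homomorphisms gives, for every place $v$, a commutative square relating the local Kummer maps, whence $\phi_{2,*}$ carries $\mathrm{im}(\partial_{v})\subseteq H^1(F_v,A[\phi_1])$ into $\mathrm{im}(\partial'_{v})\subseteq H^1(F_v,B_2[\psi_1])$. A class cutting out $\Sel_{\phi_1}(A)$ therefore maps to a class lying in every local soluble subgroup for $\psi_1$, i.e.\ into $\Sel_{\psi_1}(B_2)$; and since $\phi_{2,*}$ is an isomorphism on $H^1(F,-)$, this restriction is injective.

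The only genuinely substantive step is the descent $a'\in A(F)$, and it rests entirely on the scheme-theoretic identity $A[\phi_1]\cap A[\phi_2]=0$, which is a formal consequence of $\phi_2|_{A[\phi_1]}$ being a monomorphism; everything else is bookkeeping. The Selmer part requires no new idea beyond re-reading the same morphism of exact sequences cohomologically, so I do not anticipate obstacles there.
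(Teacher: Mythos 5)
Your proof is correct. For the Mordell--Weil injection you argue by an explicit Galois descent: lift $b_1$ to $a\in A(\overline F)$, correct it by the unique $t\in A[\phi_1](\overline F)$ with $\phi_2(t)=\phi_2(a)-b_2$, and use $A[\phi_1]\cap A[\phi_2]=0$ (a formal consequence of $\phi_2|_{A[\phi_1]}$ being injective) to see that the corrected lift $a'$ is $F$-rational, whence $b_1=\phi_1(a')\in\phi_1(A(F))$. The paper instead runs the same square through Galois cohomology: it compares the two Kummer sequences and notes that if $\psi_2(y)=\psi_1(z)$ with $z\in B_2(F)$, then $\phi_{2,*}(\delta_1(y))=\partial_1(\psi_1(z))=0$, and since $\phi_2$ induces an isomorphism $H^1(F,A[\phi_1])\simeq H^1(F,B_2[\psi_1])$, this forces $\delta_1(y)=0$, i.e.\ $y\in\phi_1(A(F))$. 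The two arguments encode the same mechanism---your descent is the point-level unwinding of the injectivity of $\phi_{2,*}$ on $H^1$, with the cocycle $\sigma\mapsto a'^\sigma-a'$ playing the role of the class $\delta_1(y)$---so the difference is one of presentation: yours avoids cohomology for the Mordell--Weil statement at the cost of an explicit choice of lift, while the paper's chase is shorter once the cohomological diagram is in place. For the Selmer embedding, your argument (a morphism of short exact sequences, the induced isomorphism $\phi_{2,*}$ on global and local $H^1$, and compatibility of the local Kummer images) spells out in full what the paper leaves implicit after establishing the injection on $F$-points; this is a harmless and indeed clarifying elaboration rather than a divergence.
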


\begin{proof}
	Taking cohomology, we obtain the following commutative diagram, with exact rows:
	\begin{center}
		\begin{tikzcd}
		0 \arrow[r] &A[\phi_1](F)\arrow[r] \arrow[d,equal,"\phi_2"] & A(F) \arrow[r, "\phi_1"] \arrow[d,"\phi_2"]        & B_1(F) \arrow[r,"\delta_1"] \arrow[d, "\psi_2"]       & H^{1}(F, A[\phi_1])\arrow[d,equal,"\phi_2"] \\
		0 \arrow[r] & B_2[\psi_1](F)\arrow[r] & B_2(F) \arrow[r, "\psi_1"]       & {C(F)} \arrow[r, "\partial_1"]      & H^1(F, B_2[\psi_1])            & 
		\end{tikzcd}
	\end{center}
	Consider the composite of maps
	\[B_1(F)\xrightarrow{\psi_2}C(F)\to \frac{C(F)}{\psi_1(B_2(F))}.\]
	We show that the kernel is exactly $\phi_1(A(F))$ and, therefore, that there is an injection
	\[\psi_2\colon\frac{B_1(F)}{\phi_1(A(F))}\hookrightarrow\frac{C(F)}{\psi_1(B_2(F))},\]
	from which the result follows.	If $x \in A(F)$, then $\psi_2(\phi_1(x)) = \psi_1(\phi_2(x)) =0.$
	Conversely, if $y \in B_1(F)$ and $\psi_2(y) = \psi_1(z)$ for some $z\in B_2(F)$, then
	\[\phi_2(\delta_1(y))=\partial_1(\psi_2(y))=\partial_1(\psi_1(z)) = 0. \]
	Since $\phi_2$ is an isomorphism on cohomology, it follows that $\delta_1(y) = 0$ and, hence, that $y$ is in the image of $\phi_1$.
\end{proof}

Recall that $A[\pi]$ decomposes as a direct sum of characters, so that $A[\pi](\overline F)=\langle P_1,\ldots,P_k\rangle$, and $\Ga F$ stabilizes each of the subgroups $\langle P_i\rangle$. Thus, $\pi$ factors as a product of $\zeta$-linear $3$-isogenies:
\begin{center}
	\begin{tikzcd}
	A=B_0 \arrow[rrrr, "\pi"] \arrow[rd,"\phi_1"'] &        &          &     &             \Aone=B_k \\
	&B_1 = A/\langle P_1\rangle \arrow[r,"\phi_2"]&\cdots \arrow[r,"\phi_{k -1}"] & {B_{k-1} = A/\langle P_1,\ldots, P_{k-1}\rangle} \arrow[ru,"\phi_k"'] &  
	\end{tikzcd}
\end{center}
Moreover, each of the maps $\phi_i\colon B_{i-1}\to B_{i}$ can be twisted to a map $\phi_{i,d}\colon B_{i-1,d}\to B_{i,d}$.

\begin{corollary}\label{each-selmer-group-bounded}
	For each $i = 1,\ldots, k$, the average size of $\Sel_{\phi_{i,d}}(B_{i-1,d})$, for $d \in \B(F)$, is bounded.
\end{corollary}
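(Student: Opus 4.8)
The plan is, for a fixed $i\in\{1,\dots,k\}$, to reorder the characters so that $P_i$ is split off \emph{last}, bound the average size of the Selmer group of that last isogeny using the dual form of \Cref{thm:selmer}, and then transfer this bound back to $\Sel_{\phi_{i,d}}(B_{i-1,d})$ via an injection of Selmer groups. Concretely, since $A[\pi]=\langle P_1\rangle\oplus\cdots\oplus\langle P_k\rangle$ as $G_F$-modules, we may form $C:=A/\langle P_j:j\ne i\rangle$, which inherits a $\zeta$-multiplication, and $\pi$ factors through $\zeta$-linear $3$-isogenies $A\to\cdots\to C\xrightarrow{\psi}A^{(1)}$ where $\psi$ is the quotient by the image of $\langle P_i\rangle$; thus $C[\psi]\cong\langle P_i\rangle$ and $\widehat{A^{(1)}}[\widehat\psi]=(C[\psi])^\vee\cong\langle P_i\rangle^\vee$ (Cartier dual). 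Note also $B_{i-1}[\phi_i]=\langle P_1,\dots,P_i\rangle/\langle P_1,\dots,P_{i-1}\rangle\cong\langle P_i\rangle$.

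For the first step I would apply \Cref{lem:last-map-bounded} to $\psi\colon C\to A^{(1)}$, so what must be checked is that $\widehat{A^{(1)}}[\widehat\psi]$ is almost everywhere locally (in fact globally) a direct summand of $\widehat{A^{(1)}}[\pi]$, where $\pi$ now denotes the $\pi$-isogeny for the natural $\zeta$-multiplication on $\widehat{A^{(1)}}$. Here duality converts the hypothesis on $A$ into a summand statement for $\widehat{A^{(1)}}$: since $\pi_A$ descends $1-\zeta$, its dual $\widehat{\pi_A}\colon\widehat{A^{(1)}}\to\widehat A$ has kernel the Cartier dual $A[\pi]^\vee$ and is, up to isomorphism (and a harmless switch $\zeta\leftrightarrow\zeta^{-1}$), the $\pi$-isogeny of $\widehat{A^{(1)}}$; hence $\widehat{A^{(1)}}[\pi]\cong A[\pi]^\vee$ as $G_F$-modules, which is again a direct sum of characters because $A[\pi]$ is. Moreover $\widehat{A^{(1)}}[\widehat\psi]$ is the Cartier dual of the quotient $A[\pi]/\langle P_j:j\ne i\rangle\cong\langle P_i\rangle$ of $A[\pi]$ by a direct summand, so dualizing the corresponding split exact sequence shows that $\widehat{A^{(1)}}[\widehat\psi]$ occupies a direct summand of $A[\pi]^\vee=\widehat{A^{(1)}}[\pi]$. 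Thus \Cref{lem:last-map-bounded} gives that $\avg_d\#\Sel_{\psi_d}(C_d)$ is finite.

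For the second step, consider the commutative square
\begin{center}
\begin{tikzcd}
B_{i-1} \arrow[r, "\phi_i"] \arrow[d, "q"'] & B_i \arrow[d, "q'"] \\
C \arrow[r, "\psi"] & A^{(1)}
\end{tikzcd}
\end{center}
where $q\colon B_{i-1}=A/\langle P_1,\dots,P_{i-1}\rangle\to A/\langle P_j:j\ne i\rangle=C$ and $q'\colon B_i=A/\langle P_1,\dots,P_i\rangle\to A/A[\pi]=A^{(1)}$ are the evident quotient isogenies (both $\zeta$-linear). The square commutes because both composites $B_{i-1}\to A^{(1)}$ are the canonical map $A/\langle P_1,\dots,P_{i-1}\rangle\to A/A[\pi]$. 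One checks directly that $q$ carries $B_{i-1}[\phi_i]$ isomorphically onto $C[\psi]$: the image is $(\langle P_1,\dots,P_i\rangle+\langle P_j:j\ne i\rangle)/\langle P_j:j\ne i\rangle=A[\pi]/\langle P_j:j\ne i\rangle=C[\psi]$, and $\ker q$ meets $B_{i-1}[\phi_i]$ trivially since these are distinct coordinate summands of $A[\pi]$. Twisting the square by $d$ and applying \Cref{injection-of-selmer}, we obtain $\Sel_{\phi_{i,d}}(B_{i-1,d})\hookrightarrow\Sel_{\psi_d}(C_d)$ for every $d\in\B(F)$, and hence $\avg_d\#\Sel_{\phi_{i,d}}(B_{i-1,d})\le\avg_d\#\Sel_{\psi_d}(C_d)<\infty$.

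The formal skeleton --- reorder, bound the last isogeny, transfer via an injection --- is immediate from \Cref{lem:last-map-bounded} and \Cref{injection-of-selmer}. The main obstacle is the duality bookkeeping in the middle step: pinning down the natural $\zeta$-multiplication on $\widehat{A^{(1)}}$, establishing the $G_F$-equivariant identification $\widehat{A^{(1)}}[\pi]\cong A[\pi]^\vee$, and verifying that $\widehat{A^{(1)}}[\widehat\psi]$ sits inside it as the expected summand. This is purely a matter of chasing Cartier duality (an additive, order-reversing equivalence) through the description of $\pi$ as $1-\zeta$, together with \Cref{lem:zeta-linear} to see that $\widehat\psi$ is again a $\zeta$-linear $3$-isogeny so that $\ker\widehat\psi\subseteq\widehat{A^{(1)}}[\pi]$.
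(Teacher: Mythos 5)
Your proposal is correct and matches the paper's own argument: your $C$ is exactly the paper's $B'_{k-1}=A/\langle P_1,\dots,P_{i-1},P_{i+1},\dots,P_k\rangle$, you bound $\avg_d\#\Sel_{\psi_d}(C_d)$ via \Cref{lem:last-map-bounded} and transfer the bound through the same commutative square using \Cref{injection-of-selmer}. The only cosmetic difference is how the summand hypothesis for the dual is verified (you dualize the split sequence in $A[\pi]$ directly, while the paper invokes \Cref{lem:tensor-cyclotomic} to get complete reducibility of $\widehat A^{(1)}[\pi]$), which amounts to the same bookkeeping.
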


\begin{proof}
  The assumption that $A[\pi]$ decomposes as a sum of characters means that we can take the $P_i$'s in any order. Hence, for each $i, d$, there is a commutative diagram
	\begin{center}
	\begin{tikzcd}
	B_{i-1,d} \arrow[r, "\phi_{i,d}"] \arrow[d]        & B_{i,d}  \arrow[d]    \\
	B'_{k-1,d}\arrow[r, "\psi_{i,d}"]       & A^{(1)}_d
	\end{tikzcd}\end{center}
where 
\[B'_{k-1,d}:=\frac{A_{d}}{\langle P_1, \ldots, P_{i-1},P_{i+1}\ldots, P_k\rangle}\]
and $\ker(\psi_{i, d}) = \ker(\phi_{i, d}) = \langle P_i\rangle$.

	 By \Cref{lem:tensor-cyclotomic}, since $A[\pi]$ is completely reducible, so is $\Aone[\pi]$ and hence so is $\widehat A^{(1)}[\pi]$. Thus $\widehat{A}^{(1)}[\widehat\psi_{i,d}]$ is a direct summand of $\widehat A^{(1)}[\pi]$. It follows from \Cref{lem:last-map-bounded} that the average size of $\Sel_{\psi_{i,d}}(B'_{k-1,d})$ is bounded. By \Cref{injection-of-selmer}, we have embeddings $\Sel_{\phi_{i,d}}(B_{i-1,d})\hookrightarrow \Sel_{\psi_{i,d}}(B'_{k-1,d})$ for each $d \in \B(F)$, so the average size of $\Sel_{\phi_{i,d}}(B_{i-1,d})$ is bounded as well.
\end{proof}

\begin{proof}[Proof of Theorem $\ref{thm:completely-reducible}(i)$]
	By \Cref{lem:tensor-cyclotomic}, since $A[\pi]$ is completely reducible, so is $A^{(i)}[\pi]$ for all $i$. Hence, we can factor $\Ai\to A^{(i+1)}$ as
	\[A^{(i)}=B_0^{(i)}\xrightarrow{\phi^{(i)}_1} B_1^{(i)}\to \cdots\to B_{k-1}^{(i)} \xrightarrow{\phi^{(i)}_k} B_k^{(i)} = A^{(i+1)}.\]
	By \Cref{each-selmer-group-bounded}, for each $i,j$, the average rank of $\Sel(\phi^{(i)}_{j,d})$ is bounded. The result now follows exactly as in the proof of \Cref{thm:completely-reducible}$(ii)$.
\end{proof}

\begin{proof}[Proof of Theorem $\ref{thm:explicit-rank-bound-intro}$]
   The hypotheses imply that the Rosati involution associated to the polarization restricts to complex conjugation on the subring $\Z[\zeta]$. Thus, after identifying $A\simeq \widehat A$ and $A_{-3^{n}}\simeq \widehat A_{-3^{n}}$, we can factor multiplication by $-3$ on $A_d$ as the composition
 \[A_d\xrightarrow{\pi_d^{3^{m-1}}} A_d^{(3^{m-1})} = A_{-3^nd}\xrightarrow{\widehat\pi_d^{3^{m-1}}}\widehat A_d,\]
 where the middle equality is \Cref{rem:zeta-twist-vs-d-twist}. As in the proof of \Cref{thm:completely-reducible}$(ii)$, we can factor $\pi_d^{3^{m-1}}$ as a product of $\dim A$ $3$-isogenies $\phi_{j, d}$. By duality, $\widehat\pi_d^{3^{m-1}}$ factors as the product of the dual isogenies $\widehat\phi_{j, d}$.  Thus, for each $d$, we have
 \begin{align*}
	\rk A_d(F) &\leq\dim_{\F_3}\frac{A_d(F)}{3A_d(F)}\leq \dim_{\F_3}\Sel_3(A_d)\\
	&\leq \sum_{j=1}^{\dim A}\dim_{\F_3}\Sel(\phi_{j, d}) \+ \Sel(\widehat\phi_{j, d})
\end{align*}
and hence, by \Cref{prop:Selmer-rank-explicit-bound},
\[\avg_d\rk A_d(F) \leq \dim A\cdot (\#S + 3^{-\#S}).\]
\end{proof}

\section{The average rank in twist families of trigonal Jacobians}\label{sec:trigonal}

Next we use \Cref{thm:completely-reducible} to prove \Cref{cor:trigonal}. In this section, $F$ is a number field and $\zeta \in \overline F$ is a primitive $3^m$-th of unity, for some $m \geq 1$. Let $n = 3^m$, as always.

\subsection{Trigonal Jacobians}\label{subsec:trigonal}
Let $f(x) \in F[x]$ be a monic separable polynomial such that $f(0) \ne 0$, and let $C$ be the smooth projective curve with affine model 
\[C \colon y^3 = x f(x^{3^{m-1}}).\] If $m > 1$, then $C$ has a unique rational point $\infty$ at infinity, and has genus $g = 3^{m-1}\deg(f)$. In fact, we will assume $m > 1$, since  the case $m = 1$ will be subsumed by the results of \Cref{subsec:iteratedPryms}.

Let $J = \Jac(C)$ be the Jacobian of $C$, a $g$-dimensional principally polarized abelian variety over $F$. The automorphism $(x, y)\mapsto (\zeta^3 x, \zeta y)$ induces an automorphism of $J_{\bar F}$ of order $3^m$, which we again call $\zeta$, and which endows $J$ with $\zeta_{n}$-multiplication. As in \Cref{sec:zeta-mult}, the endomorphism $1-\zeta \in \End_{\bar F}(J)$ descends to an isogeny $\pi\:J\to J^{(1)}$ over $F$.

\begin{lemma}\label{lem:group-theory-semisimple}
 Let $G$ be an extension of $(\Z/2\Z)^k$ by a $3$-group $H$. Then every irreducible representation $\rho\:G\to \GL_N(\F_3)$ is one-dimensional. Consequently, any $G$-representation $V$ over $\F_3$ admits a full flag.
\end{lemma}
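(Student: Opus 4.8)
The plan is to exploit that we work in characteristic $3$: the normal $3$-subgroup $H$ is forced to act trivially on any simple module, so every irreducible $\F_3$-representation of $G$ is inflated from the abelian quotient $\bar G := G/H \cong (\Z/2\Z)^k$, and the latter has only one-dimensional $\F_3$-representations.

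First I would show that $H$ acts trivially on any irreducible $\rho\colon G \to \GL(V)$ with $V \ne 0$. The image $\rho(H)$ is a finite $3$-group, so the standard fixed-point count in characteristic $3$ applies: letting $H$ act on the finite set $V$, every orbit has $3$-power size, hence $\#V^H \equiv \#V = 3^{\dim V} \equiv 0 \pmod 3$; since $0 \in V^H$ this forces $V^H \ne 0$. Because $H$ is normal in $G$, the subspace $V^H$ is $G$-stable, and irreducibility then gives $V^H = V$. Thus $\rho$ factors through $\bar G$.

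Next I would record that every simple $\F_3[\bar G]$-module is one-dimensional. Since $\#\bar G = 2^k$ is prime to $3$, the group algebra $\F_3[\bar G]$ is semisimple by Maschke; moreover $\F_3[\Z/2\Z] = \F_3[t]/(t^2-1) \cong \F_3 \times \F_3$ (the factors $t-1$ and $t+1$ are coprime as $-1 \in \F_3^\times$ has order $2$), and tensoring $k$ copies over $\F_3$ gives $\F_3[\bar G] \cong \F_3^{2^k}$, a product of copies of $\F_3$. Hence every simple $\F_3[\bar G]$-module is one-dimensional, so $\dim V = 1$, proving the first assertion.

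Finally, for the ``consequently'' part, given an arbitrary $\F_3[G]$-module $V$ I would induct on $\dim V$: if $V \ne 0$, choose a minimal nonzero $G$-submodule $W \subseteq V$, which is irreducible and hence one-dimensional by the first part; apply the inductive hypothesis to $V/W$ to obtain a full flag of $G$-submodules, pull it back along the quotient map $V \to V/W$, and prepend $0 \subset W$ to get a full flag of $V$. There is no real obstacle here; the only two points needing a little care are the normality of $H$ (so that $V^H$ is genuinely a $G$-subrepresentation) and the characteristic-$3$ fixed-point lemma, both of which are entirely standard.
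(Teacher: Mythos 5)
Your proof is correct and follows essentially the same route as the paper: show the normal $3$-group $H$ acts trivially on any irreducible via the characteristic-$3$ fixed-point count, then observe that the quotient $(\Z/2\Z)^k$ has only one-dimensional irreducible $\F_3$-representations, and deduce the full flag from Jordan--H\"older-type induction. The only (harmless) differences are cosmetic: you conclude triviality of $H$ directly from $G$-stability of $V^H$ rather than via semisimplicity of $\rho|_H$, and you get one-dimensionality from $\F_3[(\Z/2\Z)^k]\simeq \F_3^{2^k}$ rather than by simultaneous diagonalization of the order-$2$ operators.
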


\begin{proof}
 Since $H\lhd G$ is normal and $\rho$ is semisimple, $\rho|_H$ is also semisimple. Now, any non-trivial representation $V$ over $\F_3$ of a $3$-group contains a non-zero fixed vector \cite{Serre-linear-reps}*{Prop.\ 26}. Thus, by semisimplicity and induction on $\dim V$, we see that $\rho|_H$ is trivial.  Thus,  $\rho$ factors through a representation of $(\Z/2\Z)^k$. For any $g\in (\Z/2\Z)^k$, $\rho(g)\in \GL_N(\F_3)$ has order at most $2$, so its minimal polynomial, either $X\pm1$ or $X^2-1$, has distinct $\F_3$-rational roots. Hence, $\rho(g)$ is diagonalizable, and since $(\Z/2\Z)^k$ is abelian, the operators $\rho(g)$ are simultaneously diagonalizable. In other words, $\rho$ is a direct sum of characters. Since $\rho$ is irreducible, it follows that $\rho$ is one-dimensional.\end{proof}

We first prove \Cref{cor:trigonal} for Jacobians of the curves $C\: y^3 = xf(x^{3^{m-1}})$. In \Cref{thm:iteratedPrym}, we will address the curves $C\: y^{3^m} = f(x)$. 

\begin{proof}[Proof of Corollary $\ref{cor:trigonal}$]
	By assumption $\Gal(f)$ is an extension of $(\Z/2\Z)^k$ by a $3$-group $H$. By \Cref{thm:completely-reducible}, it is enough to show that the Galois representation $J[\pi]$ has a full flag, and splits as a direct sum of characters if $H = 1$.

\begin{lemma}
    $J[\pi^{3^{m-1}}]=J[1-\zeta_3]$ is a maximal isotropic $\F_3$-subspace of $J[3]$ of dimension $g$.
\end{lemma}
 \begin{proof}
     Degree considerations show that $\dim J[1-\zeta_3] = g$, so we need only show that $J[1-\zeta_3]$ is isotropic with respect to the Weil pairing $J[3] \times J[3] \to \mu_3$. Now, the Rosati involution $\dagger$ sends the ideal $(1 - \zeta_3) = (\sqrt{-3})$ to itself (by \Cref{lem:trigonal-rosati} below), and $\langle \alpha P, Q \rangle = \langle P, \alpha^\dagger Q\rangle$ for all $\alpha \in \End(J_{\overline{F}})$ and $P, Q \in J[3]$. If $P,Q \in J[(\sqrt{-3})]$, we may write $Q = \sqrt{-3}(R)$, for some $R \in J[3]$. We then compute
     \[\langle P,Q\rangle = \langle P, \sqrt{-3}(R)\rangle = \langle -\sqrt{-3}(P),R\rangle = 1,\]
     showing that $J[1 - \zeta_3] = J[\sqrt{-3}]$ is isotropic.
 \end{proof}
It follows that $\dim_{\F_3}J[\pi] = \deg(f)$. Explicitly, if $\alpha$ is a root of $f$ and $\beta^{3^{m-1}}=\alpha$, then the divisor
\[(\beta, 0) + (\zeta^3\beta, 0) + (\zeta^{3\cdot 2}\beta,0) +\cdots + (\zeta^{3^m-3}\beta, 0) - 3^{m-1}\infty\]
is fixed by $\zeta$, and $J[\pi]$ is generated by the above divisor classes. Moreover, if $K$ is the splitting field of $f$ over $F$, then each of these divisors defines an element of $J(K)$.
	
The action of $G_F$ on $J[\pi]$ induces a homomorphism $\rho \colon G_F\to \GL_N(\F_3)$, where $N = \deg(f)$, whose kernel is exactly $G_K$. The image is therefore isomorphic to $\Gal(K/F)$, which is an extension of $(\Z/2\Z)^k$ by a $3$-group $H$. Hence, by \Cref{lem:group-theory-semisimple}, $J[\pi]$ admits a full flag. If, moreover, $H = 1$, then $J[\pi]$ is completely reducible, as explained in the proof of \Cref{lem:group-theory-semisimple}.
\end{proof}

\subsection{Iterated triple covers and Pryms}\label{subsec:iteratedPryms}
For our second class of Jacobians, let $f(x) \in F[x]$ be a monic separable polynomial of degree $N > 1$. Let $C$ be the smooth projective curve with affine model $y^{3^m} = f(x).$
The  degree $n = 3^m$ map $C  \to \P^1$, sending $(x,y) \mapsto x$, has Galois group $\mu_n$, at least over $\overline F$.  If $3 \nmid N$, then the fiber above infinity is a unique $F$-rational point and $C$ has genus $g = \frac12(N - 1)(3^m - 1)$. If $3 \mid N$, then the fiber above infinity may have more than one point and they may not be $F$-rational.

Let $J = \Jac(C)$ be the Jacobian. The order $3^m$ automorphism $(x, y)\mapsto (x, \zeta y)$ of $C$ induces an automorphism $\zeta$ on $J$. When $m = 1$, this endows $J$ with $\zeta_3$-multiplication, and we are in a trigonal situation similar to  \Cref{subsec:trigonal}. However, if $m \geq 2$, the automorphism $\zeta \in \Aut_{\bar F}(J)$ does not give rise to a $\zeta_{n}$-multiplication on $J$, as we have defined it in this paper. 
\begin{example}
Consider the curve $C \colon y^9 = x^2 - 1$ of genus $4$. This admits a degree three map to the elliptic curve $E \colon y^3 = x^2 - 1$, and the Jacobian $J = \Jac(C)$ is isogenous to $A \times E$, for some abelian $3$-fold $A \subset J$. The order $9$ automorphism $\zeta$ induces an order $9$ automorphism on $A$ and an order $3$ automorphism on $E$. It thereby endows $A$ with $\zeta_9$-multiplication and $E$ with $\zeta_3$-multiplication, but it does \emph{not} give a $\zeta_9$-multiplication on $J$. Indeed, the minimal polynomial for $\zeta \in \End_{\bar F} J$ is $\Phi_9(x)\Phi_3(x)$ and not $\Phi_9(x) = x^6 + x^3 + 1$. 
\end{example}  

While $J$ does not admit $\zeta_{n}$-multiplication, it is nonetheless isogenous to a product of abelian varieties $P_1 \times P_2 \times \cdots \times P_m$ where each $P_i$ has $\zeta_{3^i}$-multiplication (see \Cref{lem:prym}). In any case, we have $\mu_{2n} \subset \Aut_{\bar F}J$, and we may speak of the twists $J_d$, for each $d \in F^\times/F^{\times2n}$. 

\begin{theorem}\label{thm:iteratedPrym}
	Assume that $\Gal(f)$ is an extension of $(\Z/2\Z)^k$ by a $3$-group $H$. Then the average rank of the twists $J_d$, for squarefree $d\in F^{\times}/F^{\times 2n}$ is bounded. If $H = 1$, then the average rank of the twists $J_d$, for all $d\in F^{\times}/F^{\times 2n}$ is bounded.
\end{theorem}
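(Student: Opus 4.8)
The plan is to reduce \Cref{thm:iteratedPrym} to \Cref{thm:completely-reducible} by decomposing $J$ up to isogeny into pieces carrying genuine cyclotomic multiplication. By \Cref{lem:prym}, there is a $G_F$-equivariant isogeny $J \sim P_1\times\cdots\times P_m$, where each $P_i$ has $\zeta_{3^i}$-multiplication, realized inside (a factor of) the Prym of the cover $C_i\to C_{i-1}$ with $C_j\colon y^{3^j}=f(x)$. The superelliptic automorphism $\zeta\in\Aut_{\bar F}J$ respects this decomposition, acting on $P_i$ through the quotient $\mu_{2\cdot 3^i}$ of $\mu_{2n}$; hence the isogeny is $\mu_{2n}$-equivariant, and for each $d$ the twist $J_d$ is isogenous to $\prod_{i}(P_i)_{d_i}$, where $d_i$ is the image of $d$ under $F^\times/F^{\times 2n}\to F^\times/F^{\times 2\cdot 3^i}$. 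Since ranks are isogeny invariants, $\rk J_d(F)=\sum_{i=1}^m\rk (P_i)_{d_i}(F)$, so it suffices to bound $\operatorname{avg}_d\rk (P_i)_{d_i}(F)$ for each fixed $i$.

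The next step is to check that each $P_i$ satisfies the representation-theoretic hypotheses of \Cref{thm:completely-reducible}. Writing $\pi_i$ for the isogeny attached to $1-\zeta_{3^i}$ on $P_i$, the key point — again part of the content of \Cref{lem:prym} — is that $P_i[\pi_i]$ is a subquotient of the permutation module on the roots of $f$: it is generated by divisor classes supported on the points $(\alpha,0)$ of $C_i$ with $f(\alpha)=0$, which are fixed by $\zeta_{3^i}$, exactly as in the case $m=1$ in the proof of \Cref{cor:trigonal} (the points at infinity, which may fail to be rational when $3\mid\deg f$, only enter through combinations that are already defined over the splitting field $K$ of $f$). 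Thus the $G_F$-action on $P_i[\pi_i]$ factors through $\Gal(f)=\Gal(K/F)$. Since $\Gal(f)$ is an extension of $(\Z/2\Z)^k$ by a $3$-group $H$, \Cref{lem:group-theory-semisimple} shows that $P_i[\pi_i]$ admits a full flag, and is a direct sum of characters when $H=1$.

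Applying \Cref{thm:completely-reducible}(ii) to $P_i$ bounds the average of $\rk (P_i)_{d_i}(F)$ over squarefree $d_i\in F^\times/F^{\times 2\cdot 3^i}$, and \Cref{thm:completely-reducible}(i) does so over all $d_i$ when $H=1$. It remains to transfer these bounds to averages over $d\in F^\times/F^{\times 2n}$. The reduction map $\rho_i\colon d\mapsto d_i$ is surjective and carries squarefree classes onto squarefree classes; moreover, when $d$ is squarefree its supporting ideal is determined by $d_i$, so the fibre of $\rho_i$ through a squarefree $d$ is a torsor under a quotient of the finite group $\O_F^\times/\O_F^{\times 2n}$, along which the height changes only by a bounded factor. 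In the squarefree case the pushforward of the family is exactly the family of squarefree twists of $P_i$, which is handled by \Cref{thm:squarefreeselmer}; in fact one may bypass \Cref{thm:completely-reducible} as a black box and simply rerun its proof (via \Cref{each-selmer-group-bounded} and the "take the characters in any order" argument) directly on this family. Summing the resulting bounds over $i=1,\dots,m$ yields \Cref{thm:iteratedPrym}.

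I expect this last transfer step to be the main obstacle, especially in the "all $d$" case: there the fibres of $\rho_i$ are no longer uniformly bounded, and one must argue that the distribution of $d_i$ for $d$ of bounded height is (up to lower-order terms) governed by the same Euler product that appears in \Cref{thm:avgSel2}, rather than comparing height balls naively — precisely the sort of statement the uniformity estimates underlying \Cref{thm:avgSel} and \cite{BSW:globalI} are designed to supply. A secondary point, which I am relegating to \Cref{lem:prym}, is the precise identification of $P_i[\pi_i]$ as a $\Gal(f)$-module when $3\mid\deg f$, where the intermediate curves $C_i$ may have several non-rational points at infinity.
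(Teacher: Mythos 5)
Your overall strategy mirrors the paper's: decompose $J$ up to $\mu_{2n}$-equivariant isogeny into Pryms with genuine $\zeta_{3^i}$-multiplication, check via Schaefer-type generators that each $P_i[\pi_i]$ is a $\Gal(f)$-module so that \Cref{lem:group-theory-semisimple} applies, and then invoke \Cref{thm:completely-reducible}. (The paper organizes this as an induction on $m$, peeling off one Prym $P=\ker(J\to J')$ at a time and writing $J_d\sim P_d\times J'_d$, and it places the Galois-module statement in \Cref{lem:prymrep} rather than \Cref{lem:prym}, but these are cosmetic differences.) The genuine gap is exactly the step you flag as ``the main obstacle'': transferring the average over $d_i\in F^\times/F^{\times 2\cdot 3^i}$ to an average over $d\in F^\times/F^{\times 2n}$ in the $H=1$ (all $d$) case. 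A pushforward argument does not work naively: the fibres of $\rho_i$ are infinite, and ordering $d$ by height induces a weighting of the classes $d_i$ that is far from uniform (over $\Q$, the number of $2n$-th-power-free $d$ of height $\le X$ lying over a fixed class $d_i$ grows like $(X/h(d_i))^{1/3^{m-i}}$, so small $d_i$ are heavily over-weighted). The uniformity estimates of \cite{BSW:globalI} do not by themselves convert a bound for the natural ordering into one for this weighted ordering, so as written the $H=1$ statement is not proved. Your squarefree discussion is essentially fine, since there the fibres are finite (controlled by units) and heights of squarefree representatives agree, and your suggested alternative of rerunning the proof directly on the reindexed family is in fact the right move in general.

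The paper's resolution is to avoid any transfer: it treats the twists of $P_i$ (respectively $J'$) as a family indexed by the ``redundant'' group $\B_{2\cdot 3^m}$ from the start, and observes that the entire Selmer-averaging machinery (Theorems \ref{thm:avgSel} and \ref{thm:avgSel2}, via the integrality statement \Cref{thm:integralityaxiom}) is formulated for large families defined by local congruence conditions over a fixed $\B_{2n}$ and goes through verbatim for this family: locally, a twist by $d$ with large even $v(d)$ agrees with a twist of smaller valuation, while by \Cref{thm:integralorbits} all orbits of discriminant $d$ with $v(d)\ge 3$ are integral, so the only delicate valuation remains $v(d)=2$, exactly as in the natural indexing. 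If you replace your transfer step by this reindexing (i.e.\ apply the proof of \Cref{thm:completely-reducible}, or of Theorems \ref{thm:selmer}/\ref{thm:squarefreeselmer}, to the family $\{(P_i)_{d}\}_{d\in\B_{2n}(F)}$ directly, checking the local analysis in the extended range $0\le v(d)<2n$), your argument closes the gap and recovers both assertions of the theorem.
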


\begin{proof}
Let $C' \colon y^{3^{m-1}} = f(x)$, and let $J'$ be its Jacobian. Note that when $m = 1$, we have $C' \simeq \P^1$ and $J' = 0$. The map $q \colon C \to C'$ sending $(x,y) \mapsto (x,y^3)$ induces a surjection $q_* \colon J \to J'$, and we let $P$ be the identity component of the kernel, i.e.\ $P$ is the (generalized) Prym variety for the cover $q$. Since $q$ is a ramified triple cover, the map $q^* \colon J' \to J$ is injective. We may identify $q^* = \widehat q_*$, and it follows that the kernel of $q_*$ is already connected, and hence equal to $P$. 

\begin{lemma}\label{lem:prym}
$P$ admits $\zeta_{3^m}$-multiplication. 
\end{lemma}
\begin{proof}
It is enough to show that the minimal polynomial for $\zeta_{3^m}$, as an endomorphism of $P$, is $\Phi_{3^m}(x) = x^{2\cdot 3^{m-1}} + x^{3^{m-1}} + 1$. For this, it is enough to show that the characteristic polynomial of $\zeta_{3^m}$, acting on the homology lattice $H_1(P_\C, \Z)$ is $\Phi_{3^m}(x)^{N-1}$. By \cite[Lem.\ 3.16]{arul}, the characteristic polynomial of $\zeta_{3^m}$ acting on $H_1(C_\C, \Z)\simeq H_1(J_\C,\Z)$ is $(1 + x + x^2 + \cdots + x^{3^m - 1})^{N-1}$. The claim now follows, by induction on $m$.  
\end{proof}
Let $\pi \colon P \to P^{(1)}$ be the isogeny over $F$ descending $1 - \zeta$ over $F(\zeta)$, as usual. Note that $P[\pi] \subset P[3]$ since $P$ has $\zeta_{3^m}$-multiplication, whereas $J[1-\zeta]$ is not in general contained in $J[3]$.

\begin{lemma}\label{lem:prymrep}
The representation $G_F \to \Aut_{\F_3}P[\pi]$ factors through $\Gal(f)$.
\end{lemma}

\begin{proof}
If $\alpha_1, \ldots, \alpha_N$ are the roots of $f(x)$, then the divisor classes $(\alpha_i,0) - (\alpha_j,0)$ generate the group $J[1-\zeta]$ \cite[\S 3]{schaeferprimepower} and so the $G_F$-action on $P[\pi] \subset J[1-\zeta]$ factors through $\Gal(f)$.
\end{proof}

Now we finish the proof of \Cref{thm:iteratedPrym}. By Lemmas \ref{lem:group-theory-semisimple} and \ref{lem:prymrep}, the Galois module $P[\pi]$ has a full flag, and is completely reducible if $H=1$. Thus \Cref{thm:completely-reducible} says that the average rank of $P_d$, for squarefree $d \in F^\times/F^{\times 2 \cdot 3^m}$, is bounded (and without the squarefree condition if $H = 1$). Up to isogeny, we have $J_d \simeq P_d \times J'_d$, where $J'_d$ is the twist of $J'$ (which has $\mu_{2\cdot3^{m-1}}$ twists) by the image of $d$ under $F^\times/F^{\times 2 \cdot 3^m} \to F^\times/F^{\times 2 \cdot 3^{m-1}}$. By induction, the average rank of $J'_d$, for $d \in F^\times/F^{\times 2 \cdot 3^m}$ is bounded. (We view the family $J'_d$ over $\B_{2\cdot 3^m}$, instead of the more natural $\B_{2 \cdot 3^{m-1}}$, but the same proof works for this slightly ``redundant'' family as well.) Thus the average rank of $J_d$ is bounded. 
\end{proof}

\begin{remark}
Combining the two families considered in this section, we obtain similar results for the curves $C_{k,j} \colon y^{3^k} = xf(x^{3^j})$. The Jacobian $\Jac(C_{k,j})$ is isogenous to $\prod_{r = 1}^k P_{r,j}$, where each $P_{r,j} = \mathrm{Prym}(C_{r,j} \to C_{r-1,j})$ is a generalized Prym variety with $\zeta_{3^{r+j}}$-multiplication.
\end{remark}

\section{CM abelian varieties}\label{sec:cm}

Next, we prove \Cref{thm:cmintro}. Let $\zeta = \zeta_{3^m}$ be a primitive $3^m$-th root of unity. We recall our definition of complex multiplication:

\begin{definition}
    An abelian variety $A$ over a number field $F$ \emph{has complex multiplication by $\Z[\zeta]$} if $A$ has dimension $3^{m-1}$ and a $G_F$-equivariant ring embedding $\Z[\zeta] \hookrightarrow \End_{\bar F} A$.
\end{definition}

\begin{proof}[Proof of Theorem $\ref{thm:cmintro}$]
Set $g = 3^{m-1} = \dim A$. The assumption $\Q(\zeta)\subset F$ ensures that $A\simeq A^{(1)}$ by \Cref{lem:zeta-twist}. Hence, we can view the $3$-isogeny $\pi\: A\to A^{(1)}$ as an endomorphism of $A$, and we have $\pi^{2g} = 3u$ for some automorphism $u$ of $A$. By the multiplicativity of the global Selmer ratio \cite[Cor.\ 3.5]{shnidmanRM}, we have 
\begin{equation}\label{eq:selmermult}
 c(\pi_d)^{2g} = c(\pi_d^{2g}) = c([3]u) = c([3]).
 \end{equation}
We claim that $c([3]) = 1$. If $v$ is an infinite prime, then since $F \supset \Q(\zeta)$, we have $F_v \simeq \C$ and $c_v([3]) = \#A[3](\C)^{-1} = 3^{-2g}$. If $v \nmid 3$ is a finite prime then $c_v([3]) = c_v(A_d)/c_v(A_d) = 1$. Finally, $\prod_{v \mid 3} c_v([3]) = 3^{g [F : \Q]}$, by \cite[Prop.\ 3.1]{shnidmanRM}. Let $[F : \Q] = 2N$. Then $F$ has $N$ complex places, so $c([3]) = 3^{-2gN} \cdot 3^{g\cdot 2N} = 1$, as claimed. By (\ref{eq:selmermult}), we also have $c(\pi_d) = 1$ for all $d \in F^\times/F^{\times6g}$. 

It follows from \Cref{thm:avgSel} that the average size of $\Sel_{\pi_d}(A_d)$ is $1 + 1 = 2$. Since $2r \leq 3^r - 1$ for all integers $r$, we have for all $d$:
\[\rk_{\Z[\zeta]} A_d(F) \leq \dim_{\F_3}\Sel_{\pi_d}(A_d) \le \frac12\left( 3^{\rk\Sel_{\pi_d}(A_d)} - 1\right) = \frac12\left( \#\Sel_{\pi_d}(A_d) - 1\right).\]
Thus, the average $\Z[\zeta]$-rank of $A_d(F)$ is at most $\frac12$. The second part of the Theorem follows from \Cref{prop:tk-rank-bounds}$(ii)$.
\end{proof}
Over general number fields $F$, it is no longer true that $c(\pi_d) = 1$ for all $d$, even for abelian varieties with complex multiplication. Moreover, one must consider more than one 3-isogeny to bound the average rank of $A_d(F)$, in general.  However, one can still prove upper bounds which are significantly stronger than \Cref{thm:explicit-rank-bound-intro}.  For example, for CM abelian varieties $A$ of dimension $g = 3^{m-1}$ over $\Q$ with $\zeta_{3^m}$-multiplication, we can show that the average rank of $A_d(F)$ is at most $\frac{13}{9}g$.  Over the totally real field $\Q(\zeta_{3^m} + \overline \zeta_{3^m})$, we can also prove that an explicit positive proportion of twists have $\rk A_d(F)=  0$.  We omit these proofs, since they are somewhat technical and could probably be optimized further. Finally, we remark that the only other result in the literature in this direction that we are aware of is \cite{Diaconu-Tian}, which proves that an infinite but density zero set of twists of the degree $p$ Fermat Jacobian over $\Q(\zeta_p + \overline \zeta_p)$ have rank 0.   

\section{Rational points on hyperbolic varieties}\label{sec:uniform}

\begin{proof}[Proof of Theorem $\ref{thm:uniformcurves}$]
It is not enough to simply invoke \Cref{thm:completely-reducible} and \cite[Thm.\ 4]{kuhne}, since $\Jac(C_d)$, is not the $d$-th sextic twist of $J = \Jac(C)$ in our sense. Indeed, the twists $C_d$ come from the involution $\tau(x,y) = (-x,y)$, which does not induce $-1$ on $\Jac(C)$.  Instead, we consider the Prym variety $P = \ker(\Jac(C) \to \Jac(C/\tau))$, and its dual $\widehat P$. Note that $\zeta$ preserves $P$, and $\tau$ restricts to $-1$ on $P$. Thus, \Cref{thm:completely-reducible} applies to the twist family $P_d$, for $d \in F^\times/F^{\times6}$, and it follows that $\avg_d \rk \widehat P_d(F) = \avg_d \rk P_d(F)$ is bounded.

The inclusion $P_d \hookrightarrow \Jac(C_d)$ induces a surjection $q\: \Jac(C_d) \to \widehat P_d$. Suppose that $C_d(F)$ is non-empty and that $z_0 \in C_d(F)$. Then composing with the Abel--Jacobi map $C_d \hookrightarrow \Jac(C_d)$, using $z_0$ as base point, we obtain a map $j \colon C_d \to \widehat P_d$. We prove that $j$ is injective on points, except possibly at the fixed points of $\tau$ (the points where $x = 0$ and the point(s) at infinity). If $j(w) = j(z)$, then $w - z$ is the pullback of a divisor on $C_d/\tau$.  Thus $\tau(w - z) \equiv w - z$, and so $\tau(w) + z \equiv w + \tau(z)$. But if $D$ is a divisor of degree $2$ on a non-hyperelliptic curve $C$, then by Riemann--Roch, we have $h^0(D) = 2 + 1 - g + h^0(K - D) = 3 - g + g-2 = 1$. Thus, we must have $\tau(w) +z = w + \tau(z)$. If $\tau(w) = \tau(z)$, then $w = z$, as desired. The only other possibility is that $\tau(w) = w$ and $\tau(z) = z$. This proves the claimed injectivity.

Thus, to prove \Cref{thm:uniformcurves}, we may replace $C_d$ with the image of $j$, which is a closed irreducible curve in $\widehat P_d$.  By \cite[Thm.\ 1.1]{GGK}, there is a constant $c$ such that $\#C_d(F) \leq c^{1 + \rk \widehat P_d(F)}$, for all $d$.  But since $\avg_d \rk \widehat P_d(F)$ is bounded, this implies that for all $\epsilon > 0$, there exists $N_\epsilon$ such that $C_d(F) \leq N_\epsilon$ for at least $1 - \epsilon$ of twists $d$.  
\end{proof}

In order to setup the proof (and statement) of \Cref{thm:uniformtheta}, we need to give a precise description of theta divisors for the curves $C$ with affine model $y^3 = f(x)$.  Recall that for any smooth projective curve $C/F$ of genus $g \geq 2$, the symmetric power $C^{(g-1)}$ parameterizes effective divisors $D$ on $C$  of degree $g-1$. Given $\kappa \in \mathrm{Div}(C)$ of degree $g -1$, there is a morphism  $C^{(g-1)}\to \Jac(C)$ sending $D \mapsto D - \kappa$. Its image is the theta divisor, denoted $\Theta = \Theta_\kappa$. The divisor itself depends on $\kappa$, though its class in the N\'eron--Severi group of $\Jac(C)$ does not. If $2\kappa$ is canonical, then $\Theta$ is preserved by the involution $-1$ on $\Jac(C)$, by Riemann--Roch. Such a $\kappa$ exists over $\overline F$, but need not exist over $F$, in general.  If in addition there exists $\mu_{n} \subset \Aut_{\bar F}(C)$ which fixes $\kappa$, then for each $d \in F^\times/F^{\times 2n}$, we may consider the twist $\Theta_d$ of $\Theta$, which is a divisor in $\Jac(C)_d$.  

In our case, let $f \colon C  \to \P^1$ be the degree three map $(x,y) \mapsto x$. The ramification divisor has the form $2D$, and satisfies $K_C = f^*K_{\P^1} + 2D$.  Hence  $\kappa = D - f^*0$ is a half-canonical divisor (over $F$) which is fixed by the $\mu_3$-action. We therefore obtain sextic twists $\Theta_d \subset \Jac(C)_d$, for each $d \in F^\times/F^{\times 6}$,  as in the statement of \Cref{thm:uniformtheta}.  

\begin{proof}[Proof of Theorem $\ref{thm:uniformtheta}$]
This now follows from \Cref{thm:completely-reducible} and \cite[Thm.\ 1.1]{GGK}.
\end{proof}

\section{Abelian surfaces with \texorpdfstring{$\zeta_3$}{zeta\_3}-multiplication}\label{sec:qm-surfaces}

For our final application, we study a family of abelian surfaces with $\zeta_3$-multiplication, arising as Prym varieties. We prove results on the Mordell--Weil groups in sextic twist families of such surfaces, and give applications to explicit uniform bounds on rational points in sextic twist families of bielliptic trigonal curves of genus three (\Cref{thm:genus3example}). For some recent results on rank statistics in larger families of Prym surfaces, see \cite{laga}. 

Let $F$ be a number field and let $f(x) = x^2 + ax + b \in F[x]$ be a quadratic polynomial with non-zero discriminant and $b \neq 0$. Then  $y^3 = f(x^2) = x^4 + ax^2 + b$ is an affine model of a smooth projective plane quartic curve $C$. Note the double cover $\pi \colon (x,y) \mapsto (x^2,y)$ to the elliptic curve $E \colon y^3 = f(x)$. We refer to these genus three curves as {\it bielliptic Picard curves}; see \cite{LagaShnidman}. As in \Cref{sec:uniform}, we consider the Prym variety $P = \mathrm{Prym}_{C/E}$, i.e.\ the kernel of the map $J = \Jac(C) \to E$ induced by Albanese functoriality. The Prym $P$ need not be principally polarized over $\Q$, but it admits a polarization $\lambda \colon P \to \widehat P$ whose kernel is order 4 \cite{MumfordPrym}. The $\zeta_3$-multiplication on $J$ induces $\zeta_3$-multiplication on $P$, and hence we may speak of the sextic twists $P_d$. In fact, $P_d$ is itself the Prym variety of $C_d \colon y^3 = x^4 + adx^2 + bd^2$, which covers the elliptic curve $E_d \colon y^3 = x^2 + adx + bd^2$.

\begin{lemma}
Let $\pi \colon P \to P_{-27}$ denote the descent of $1-\zeta$ to $F$. Then $P[\pi](\overline F) \simeq (\Z/3\Z)^2$ is spanned by $(s,0) - (-s,0)$ and $(t,0) - (-t,0)$, where $\pm s, \pm t$ are the four roots of $f(x^2)$. 
\end{lemma}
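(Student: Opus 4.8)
The plan is to first pin down the abstract structure of $P[\pi](\overline F)$, and then to exhibit the two divisor classes inside it and verify they are linearly independent. For the structure: since $C$ is a smooth plane quartic it has genus $3$, so $\dim J = 3$, and as $\dim E = 1$ the Prym $P$ has dimension $2$. Since $P$ carries a $\zeta_3$-multiplication, \Cref{lem:descent} applies and identifies $P[\pi](\overline F) = \ker(1-\zeta \mid P_{\overline F})$; because $(1-\zeta_3)^2 = -3\zeta_3$ in $\Z[\zeta_3]$, this kernel is contained in $P[3]$, hence is an $\F_3$-vector space. Taking degrees in the relation $(1-\zeta_3)^2 = -3\zeta_3$ gives $\deg(1-\zeta)^2 = \deg[3] = 3^{2\dim P} = 3^4$, so $\deg(1-\zeta) = 9$ and $P[\pi](\overline F)\cong(\Z/3\Z)^2$. (Alternatively, one can note $J \sim P \times E$ compatibly with the $\zeta_3$-action and use $\#J[1-\zeta] = 3^3$ and $\#E[1-\zeta_3] = 3$.)

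Next I would produce the two classes. Writing $f(x) = (x-\alpha)(x-\beta)$, we have $f(x^2) = (x^2-\alpha)(x^2-\beta)$, whose roots are $\pm s, \pm t$ with $s^2 = \alpha$ and $t^2 = \beta$; these are pairwise distinct and nonzero because $f$ has non-zero discriminant and $b = \alpha\beta \neq 0$. Set $D_s = [(s,0)-(-s,0)]$ and $D_t = [(t,0)-(-t,0)]$ in $J(\overline F)$. The double cover $(x,y)\mapsto(x^2,y)$ sends both $(\pm s,0)$ to $(\alpha,0)$ and both $(\pm t,0)$ to $(\beta,0)$, so the induced (Albanese) map $J \to E$ kills $D_s$ and $D_t$; hence $D_s, D_t \in \ker(J\to E)$, and this kernel is connected and equal to $P$ because $C \to E$ is ramified (the argument being exactly the one used for the Prym in \Cref{subsec:iteratedPryms}). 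Also, the order-$3$ automorphism $(x,y)\mapsto(x,\zeta_3 y)$ fixes each of the four points $(\pm s,0),(\pm t,0)$, hence fixes $D_s$ and $D_t$, i.e. $(1-\zeta)D_s = (1-\zeta)D_t = 0$. Thus $D_s, D_t \in P[\pi](\overline F)$.

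Finally I would check that $D_s, D_t$ are $\F_3$-independent, which by Step~1 forces them to span the $2$-dimensional group $P[\pi](\overline F)$. They are nonzero since distinct points on a curve of genus $\geq 1$ are never linearly equivalent. If they were dependent, then $D_s = \pm D_t$; but $D_s = D_t$ means $(s,0)+(-t,0) \sim (t,0)+(-s,0)$ and $D_s = -D_t$ means $(s,0)+(t,0)\sim(-s,0)+(-t,0)$, and in each case the two degree-$2$ divisors are genuinely distinct (using $\alpha \neq \beta$ and $s,t\neq 0$), so $C$ would carry a degree-$2$ divisor moving in a pencil, hence be hyperelliptic — impossible for a smooth plane quartic. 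I expect the only steps requiring care to be Step~1 (the precise count $\#P[\pi] = 9$; if one prefers, \cite{schaeferprimepower} describes $J[1-\zeta]$ explicitly in terms of the roots of $f(x^2)$, and cutting down by the map to $E$ then recovers $P[\pi]$ directly) and the connectedness claim in Step~2; the independence in Step~3 is elementary projective geometry of the quartic.
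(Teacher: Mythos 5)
Your proof is correct. In fact, the paper states this lemma without any proof, so there is nothing to compare it against; your write-up supplies a complete argument, and it does so using exactly the ingredients the paper relies on elsewhere. The structural count is right: since $(1-\zeta_3)^2=-3\zeta_3$ in $\Z[\zeta_3]$, the kernel of $1-\zeta$ on the $2$-dimensional $P$ is killed by $3$ and has order $\deg(1-\zeta)=3^{\dim P}=9$, hence is $(\Z/3\Z)^2$. The membership of $D_s=(s,0)-(-s,0)$ and $D_t=(t,0)-(-t,0)$ in $P[\pi]$ is also argued correctly: the Albanese map $J\to E$ is pushforward on divisor classes, which kills both classes since $(\pm s,0)$ (resp.\ $(\pm t,0)$) have the same image in $E$; the points $(\pm s,0),(\pm t,0)$ are fixed by $(x,y)\mapsto(x,\zeta_3 y)$, so both classes lie in $J[1-\zeta]$; and you rightly flag that one must know the classes land in the identity component $P$ of $\ker(J\to E)$, which your ramified-cover/connectedness argument (the same one the paper uses for triple covers in \Cref{subsec:iteratedPryms}) settles. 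Finally, independence via non-hyperellipticity of a smooth plane quartic (a relation $D_s=\pm D_t$ would produce a $g^1_2$) is sound, using $s,t\neq 0$ and $s\neq\pm t$, which follow from $b\neq0$ and $\mathrm{disc}(f)\neq0$. Your alternative route through Schaefer's explicit description of $J[1-\zeta]$ \cite{schaeferprimepower}, intersected with $P$, is the other natural argument and is likewise consistent with how the paper treats such torsion in \Cref{sec:trigonal}.
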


In order to apply our result to $P$, we assume that $f(x)$ has linear factors over $F$, so that $P[\pi]$ decomposes as a direct sum of two 1-dimensional Galois modules, corresponding to the quadratic characters $G_F \to \F_3^\times$ cut out by the fields $F(s)$ and $F(t)$. Then \Cref{thm:completely-reducible} says that the average rank of $P_d(F)$ is bounded, and it is interesting to ask whether there is some positive proportion of $d$ with $\rk P_d(\Q) \leq 1$, so that we may apply the Chabauty method. We do not quite prove that such a positive proportion of $d$ exists for general Pryms of this type, but we can prove it in seemingly any given example with the help of  explicit computations. We demonstrate the idea by proving the following result stated in the introduction:

 \begin{customthm}{\ref{thm:genus3example}}
Consider the sextic twist family $C_d \colon y^3 = (x^2 - d)(x^2 - 4d)$ of genus $3$ curves. For at least $\frac13$ of squarefree $d \in \Z$ such that $d \equiv 2$ or $11 \pmod{36}$, we have $\#C_d(\Q) \leq 5$. 
 \end{customthm}

 We will use the following variant of Chabauty's method:

 \begin{theorem}[Stoll]\label{thm:Stollvariant}
 Let $C$ be a smooth projective curve of genus $g \geq 2$ over a number field $F$, and let $H$ be a $G_F$-stable subgroup of $\Aut_{\bar F} C$. Embed $C \hookrightarrow \Jac(C)$ using any positive degree $H$-invariant divisor as basepoint. Suppose there is a quotient $B$ of $\Jac(C)$ such that the composition $\iota \colon C \hookrightarrow \Jac(C) \to B$ is an embedding, and suppose there exists $H \hookrightarrow \Aut_{\bar F} B$, compatible with the $H$-action on $C$, via $\iota$. Then for all but finitely many $H$-twists $C_\xi$ with $\rk B_\xi(F) < \dim B$, we have 
 \[\#C_\xi(F) \leq f_C(\rk B_\xi(F) + g - \dim B) + \#C^\mathrm{triv}_\xi(F) + \#C^{\mathrm{triv},\mathrm{non\text{-}tors}}_\xi(\bar F) \, \backslash \, C_\xi^\mathrm{triv}(F).\]
 Here, $f_C$ is the explicit function defined in \cite[\S3]{stoll:independence} and $C_\xi^\mathrm{triv}$ is the subscheme of points fixed by some non-trivial automorphism in $H$, and $C^{\mathrm{triv},\mathrm{non\text{-}tors}}_\xi$ is the subscheme of trivial points which map to non-torsion points of $B$. 
 \end{theorem}
 \begin{proof}
This is a straightforward generalization of \cite[Thm.\ 5.1]{stoll:independence}, which is the special case where $B = \Jac(C)$. (We have stated an ineffective version of the result,  which is sufficient for our purposes. This is what allows us to use the function $f_C$ as opposed to Stoll's $\tilde{f}_C$.)  The proof is the same, except that instead of the non-degenerate pairing 
\[\Omega(C/F) \times \Jac(C)(F) \otimes \Q \to F\] used in \cite[\S 6]{stoll:independence}, we use the non-degenerate pairing $\Omega(C/F)^B \times B(F) \otimes \Q \to F$, where $\Omega(C/F)^B $ is the image of $\iota^* \colon \Omega(B/F) \to \Omega(C/F)$.
 \end{proof}

We deduce \Cref{thm:genus3example} from \Cref{thm:Stollvariant} and the following theorem, whose proof will occupy the remainder of this section:

\begin{theorem}\label{thm:prymexample}
Let $C \colon y^3 = (x^2 - 1)(x^2 - 4)$, and let $P$ be the corresponding Prym variety. Let $\Sigma$ be the set of squarefree $d \in \Z$ such that $d \equiv 2$ or $11 \pmod{36}$.\footnote{$\Sigma$ is the set of $d$ such that $d, -3d\notin \Q_2^{\times 2}\cup\Q_3^{\times 2}$.} Then the average rank of $P_d$, for $d \in \Sigma$, is at most $\frac73 \approx 2.33$. Moreover, for at least $\frac13$ of $d \in \Sigma$, we have $\rk\, P_d \leq 1$. 
\end{theorem}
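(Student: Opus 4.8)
The plan is to bound $\rk P_d(\Q)$ by Selmer groups of $\zeta$-linear $3$-isogenies and then apply the results of Sections~\ref{sec:localselmer}--\ref{sec:twists-of-ab-vars}. Since the four roots $\pm1,\pm2$ of $f(x^2)=(x^2-1)(x^2-4)$ all lie in $\Q$, the preceding lemma shows that $P[\pi]\simeq(\Z/3\Z)^2$ is a \emph{trivial} $G_\Q$-module. Consequently $\pi_d$ factors as $P_d\xrightarrow{\phi_{1,d}}B_{1,d}\xrightarrow{\phi_{2,d}}P^{(1)}_d$ into $\zeta$-linear $3$-isogenies; moreover, dualizing through the degree-$4$ polarization $\lambda$ and using that $\pi^{(1)}_d\circ\pi_d=[3]u$ is self-dual---so that $\pi^{(1)}_d$ is identified with $\widehat{\pi_d}$ up to isogenies of degree prime to $3$---the four $3$-isogeny layers of $[3]_{P_d}$ are $\phi_{1,d},\phi_{2,d},\widehat\phi_{2,d},\widehat\phi_{1,d}$. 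Applying \eqref{eq: selmer-ses} repeatedly then gives
\[\rk P_d(\Q)\le\dim_{\F_3}\Sel_3(P_d)\le\sum_{j=1}^{2}\dim_{\F_3}\bigl(\Sel(\phi_{j,d})\oplus\Sel(\widehat\phi_{j,d})\bigr).\]

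Next I would compute the Selmer ratios on $\Sigma$. Since $(x^2-1)(x^2-4)$ has discriminant $2^6\cdot3^4$, the curve $C$---and hence $P$---has good reduction away from $\{2,3\}$, so $c(\phi_{j,d})=c_\infty(\phi_{j,d})\,c_2(\phi_{j,d})\,c_3(\phi_{j,d})$. By the description of $\Sigma$ recalled in the footnote, \Cref{thm:tamagawa-ratios} at $v=2$ together with a direct local computation at $v=3$ give $c_2(\phi_{j,d})=c_3(\phi_{j,d})=1$ for every $d\in\Sigma$ and $j=1,2$. Each $\phi_{j,d}$ has kernel $\simeq\Z/3\Z\otimes\chi_d$, so the archimedean factor depends only on the sign of $d$; writing $\Sigma=\Sigma_1\sqcup\Sigma_2$ for the two sign classes (each of density $\tfrac12$ in $\Sigma$ when ordered by $|d|$), one obtains $c(\phi_{j,d})=1$ on $\Sigma_1$ and $c(\phi_{j,d})=\tfrac13$---whence $c(\widehat\phi_{j,d})=3$---on $\Sigma_2$, for $j=1,2$; both $\Sigma_1$ and $\Sigma_2$ are large families. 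Now \Cref{prop:tk-rank-bounds}(ii) applied to each pair $(\phi_j,\widehat\phi_j)$ bounds $\avg_{\Sigma_1}\dim_{\F_3}(\Sel(\phi_{j,d})\oplus\Sel(\widehat\phi_{j,d}))$ by $0+3^{0}=1$ and $\avg_{\Sigma_2}\dim_{\F_3}(\Sel(\phi_{j,d})\oplus\Sel(\widehat\phi_{j,d}))$ by $1+3^{-1}=\tfrac43$, so summing over $j=1,2$ and averaging over $\Sigma$,
\[\avg_{d\in\Sigma}\rk P_d(\Q)\le 2\bigl(\tfrac12\cdot1+\tfrac12\cdot\tfrac43\bigr)=\tfrac73.\]

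For the density statement I would work inside $\Sigma_2$, where every $c(\phi_{j,d})=\tfrac13$. By \Cref{prop:tk-rank-bounds}(iii), $\dim_{\F_3}(\Sel(\phi_{j,d})\oplus\Sel(\widehat\phi_{j,d}))=1$ for a proportion at least $1-\tfrac1{2\cdot3}=\tfrac56$ of $d\in\Sigma_2$, so a union bound over $j=1,2$ gives $\dim_{\F_3}\Sel_3(P_d)\le2$ for at least $\tfrac23$ of $d\in\Sigma_2$. To upgrade ``$\le2$'' to ``$\le1$'' I would use parity: one shows that the global root number $w(P_d/\Q)=-1$ for every $d\in\Sigma_2$---this is precisely where the congruence conditions at $2$ and $3$ defining $\Sigma$ enter---so that $3$-parity for $P_d$ forces $\dim_{\F_3}\Sel_3(P_d)$ to be odd. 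Combining, at least $\tfrac23$ of $d\in\Sigma_2$ have $\dim_{\F_3}\Sel_3(P_d)=1$ and hence $\rk P_d(\Q)\le1$; since $\Sigma_2$ has density $\tfrac12$ in $\Sigma$, this produces $\rk P_d(\Q)\le1$ for at least $\tfrac12\cdot\tfrac23=\tfrac13$ of $d\in\Sigma$. The main obstacle is this last ingredient: evaluating the $3$-adic local root number of the bad-reduction surface $P_d$, checking that the resulting global root number equals $-1$ on the relevant square classes, and confirming that $3$-parity is available unconditionally in this setting.
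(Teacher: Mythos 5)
There is a genuine gap, and it sits exactly at the step you dismiss as ``a direct local computation at $v=3$.'' Your claim that $c_3(\phi_{1,d})=c_3(\phi_{2,d})=1$ for $d\in\Sigma$ is both unjustified and false. \Cref{thm:tamagawa-ratios} does not apply at $v=3$ (the residue characteristic is $3$ and $P$ has bad reduction there), and no elementary computation gives these ratios. What is actually true, and what the paper proves, is that $c_3(\phi_d)c_3(\psi_d)c_3(\phi_{-27d})c_3(\psi_{-27d})=c_3([3])=9$ with each factor in $\{1,3\}$ (using $c_3(\alpha)c_3(\widehat\alpha)=3$ for a $3$-isogeny over $\Q_3$), and that $c_3(\phi_d)\neq c_3(\psi_d)$ on $\Sigma$ --- so exactly one of your two layers has $c_3=3$. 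Establishing this inequality is the hard point flagged in the introduction (the ``local $3$-adic root number'' condition); the paper gets it from an unconditional parity statement (\Cref{prop:parity}, proved via the Cassels--Tate pairing and the prime-to-$3$ polarization) combined with an explicit Magma computation of $\pi$-Selmer groups at $d=2$ and $3$-adic local constancy of the ratios on $\Sigma$. Your proposal has no substitute for this input. (Your $c_2=1$ conclusion is correct, but should be justified by the vanishing of $H^1(\Q_2,A_d[\phi_d])$ when $d,-3d\notin\Q_2^{\times2}$, not by \Cref{thm:tamagawa-ratios}, which again assumes good reduction.)

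The error then propagates fatally into your second step. You propose to upgrade ``$\dim_{\F_3}\Sel_3(P_d)\le 2$'' to ``$\le 1$'' on your negative-sign class by asserting $w(P_d/\Q)=-1$ and invoking $3$-parity, which you admit you cannot prove. But worse, this is internally inconsistent with your own ratio values: the unconditional parity relation $\dim_{\F_3}\Sel_3(P_d)\equiv \operatorname{ord}_3 c(\pi_d)\pmod 2$ (\Cref{prop:parity}) together with $c(\phi_{1,d})=c(\phi_{2,d})=\tfrac13$ would force $\dim_{\F_3}\Sel_3(P_d)$ to be \emph{even}, not odd, so your route cannot close. With the correct ratios the bookkeeping is different and needs no root-number input beyond the $d=2$ computation: for every $d\in\Sigma$ one of the pairs $(\phi_d,\widehat\phi_d)$, $(\psi_d,\widehat\psi_d)$ has global ratio $1$ and the other has ratio $3^{\pm1}$, so \Cref{prop:tk-rank-bounds}$(ii)$ gives the average bound $1+\tfrac43=\tfrac73$ on all of $\Sigma$ (your $\tfrac73$ is a numerical coincidence resting on wrong inputs), and \Cref{prop:tk-rank-bounds}$(iii)$ gives $\Sel(\phi_d)=\Sel(\widehat\phi_d)=0$ for at least $\tfrac12$ of $d$ and $\dim_{\F_3}\Sel(\psi_d)\oplus\Sel(\widehat\psi_d)=1$ for at least $\tfrac56$ of $d$, whence $\dim_{\F_3}\Sel_3(P_d)\le1$ for at least $\tfrac56-\tfrac12=\tfrac13$ of $d\in\Sigma$.
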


\begin{proof}[Proof of Theorem $\ref{thm:genus3example}$]    
The curve $C$ embeds in $B = \widehat P = J/\pi^*E$, the dual of the Prym $P$; see \cite[1.12]{Barth}. We apply Theorem \ref{thm:Stollvariant} to the cyclic group $H$ of order six generated by $(x,y) \mapsto (-x,\zeta_3y)$. We embed $C$ in its Jacobian using the point $\infty = [0 : 1 : 0]$. Consulting \cite[Lem.\ 3.1]{stoll:independence}, we have $f_C(r) \leq 4$ when $C$ is a plane quartic and $r \leq 2$. We have $C^\mathrm{triv}_d(\Q) = \{\infty\}$ for all $d \in \Sigma$, so the second term in \Cref{thm:Stollvariant} is 1. The third term is 0 since all eight of the trivial points on $C$ map to torsion points of $B$. Indeed, the points with $y = 0$ map to 3-torsion points on $\Jac(C_d)$, and if $P = (0,y_0) \in C_d$, then $2P - 2\infty \in \pi^*E_d$, hence $P$ is sent to a $2$-torsion point on $B = J_d/\pi^*E_d$. Altogether we get a bound of $C_\xi(F) \leq 5$ in Stoll's theorem, which combined with Theorem \ref{thm:prymexample} proves \Cref{thm:genus3example}.\end{proof}

\subsection{Bielliptic Picard curves}

Before proving \Cref{thm:prymexample}, we prove some preliminary lemmas.

 \begin{lemma}\label{lem:trigonal-rosati}
    Let $(J,\lambda)$ be the Jacobian of a curve $C$ with a non-trivial automorphism $\zeta$ inducing $\zeta$-multiplication on $J$.  Then the Rosati involution $\alpha \mapsto \lambda^{-1} \widehat\alpha \lambda$ on $\End(J)$ restricts to complex conjugation on $\Z[\zeta] \subset \End(J)$. 
    \end{lemma}
    
    \begin{proof}
    Let $D_0$ be a degree $g-1$ divisor fixed by $\zeta$.  Consider the theta divisor
    \[\Theta = \{D - D_0 : \deg(D) = g-1, \hspace{1mm} D \mbox{ effective}\} \subset J\] 
    and set $\LL = \O_J(\Theta)$. We have $\lambda = \varphi_{\LL} \colon J \to \widehat J$. Since $\Theta$ is fixed by $\zeta$, we have $\zeta^*\LL \simeq \LL$ and hence $\varphi_\LL = \varphi_{\zeta^*\LL} = \widehat\zeta \varphi_{\LL} \zeta$. Rearranging, we see that the Rosati involution sends $\zeta$ to $\zeta^{-1} = \overline\zeta$.
    \end{proof}

\begin{remark}
    The proof shows, more generally, that if $\alpha$ is an automorphism of a curve $C$, and $\alpha^*$ is the induced automorphism of $\Jac(C)$, then the Rosati involution sends $\alpha^*$ to its inverse.
\end{remark}

Now let $C \colon y^3 = x^4 + ax^2 + b$ be a bielliptic Picard curve defined over $\Q$. Let $P$ be the Prym surface for the covering $C \to E$ where $E \colon y^3 = x^2 + ax + b$.  Since $P$ has $\zeta_3$-multiplication, the endomorphism $[-3] \colon P \to P$ factors as $[-3] =\pi_{-27}\circ\pi$, where $\pi \colon P \to P_{-27}$ is the canonical $(3,3)$-isogeny coming from \Cref{lem:descent} (see also \Cref{rem:zeta-twist-vs-d-twist}). Let $\pi_d \colon P_d \to P_{-27d}$ be the sextic twist family of $(3,3)$-isogenies, and let $\widehat\pi_d\:\widehat{P}_{-27d}\to \widehat P_d$ denote the dual isogeny.

\begin{lemma}\label{lem:selmer-dual-twist}
 $\Sel(\pi_{-27d}) \simeq \Sel(\widehat\pi_d)$.
\end{lemma}
\begin{proof}
      Let $C_d\:y^3 = x^4 + adx^2 + bd^2$, let $E_d\: y^3=x^2+adx + bd^2$, and let $J_d =\Jac(C_d)$.\footnote{Note that this is not the same as the $d$-th sextic twist coming from the $\zeta$-multiplication on $J$. The latter is isomorphic to the $d$-th quadratic twist of the Jacobian of $dy^3 = f(x^2)$, and is in general not a Jacobian.} The abelian variety $P_d$ is, by definition, $\ker(J_d\to E_d)$, where the map $J_d\to E_d$ is induced by the double cover $C_d\to E_d$. Let $\lambda_{J}$ denote the principal polarization of $J_d$, and let $\zeta_J$ be the automorphism of $J_d$ induced by the map $(x, y)\mapsto (x, \zeta_3y)$ on $C_d$.  By \Cref{lem:trigonal-rosati} we have a commutative diagram
    \[\begin{tikzcd}
P_d \arrow[r] \arrow[d, "\overline\zeta"] \arrow[rrr, "\lambda_d", bend left] & J_d \arrow[r, "\lambda_J"] \arrow[d, "\overline\zeta"] & \widehat J_d \arrow[r] \arrow[d, "\widehat\zeta"] & \widehat P_d \arrow[d, "\widehat\zeta"] \\
P_d \arrow[r] \arrow[rrr, "\lambda_d"', bend right]                           & J_d \arrow[r, "\lambda_J"']                            & \widehat J_d \arrow[r]                            & \widehat P_d                           
\end{tikzcd}\]
     It follows that $\zeta\ii = \lambda_d\ii\widehat\zeta\lambda_d$ in $\End(P_d)$, and hence 
    \[ (1-\widehat\zeta\ii) \circ\lambda_d= \lambda_d\circ (1-\zeta)\]
    over $\overline{F}$. Over $F$ we must therefore have $\widehat\pi_{-27d}\circ\lambda_d=\lambda_{-27d}\circ\pi_d$, and since $\lambda_d$ is prime-to-$3$, we deduce $\Sel(\pi_d)\simeq \Sel(\widehat\pi_{-27d})$.
\end{proof}

Since $[-3] =\pi_{-27}\circ\pi$, it follows that
\[\rk(P_d) \le \dim_{\F_3}\Sel_3(P_d) \le \dim_{\F_3}(\Sel(\pi_d) \+ \Sel(\pi_{-27d})) = \dim_{\F_3}\Sel(\pi_d) +\dim_{\F_3} \Sel(\widehat{\pi}_d).\]
The following result relates the parity of $\dim_{\F_3}\Sel_3(P_d)$ to the global Selmer ration $c(\pi_d)$.

\begin{proposition}\label{prop:parity}
Let $d \in \Q^\times$ be such that $P_{-27d}[\pi_{-27d}](\Q) = 0$, and write $c(\pi_d) = 3^m$. Then we have the congruence $ \dim_{\F_3}\Sel_3(P_d) \equiv m\pmod2$. 
\end{proposition}
\begin{proof}
 By the Greenberg--Wiles formula, we have $\#\Sel(\pi_d)/\#\Sel(\widehat \pi_d) = c(\pi_d) = 3^m$. Since $[-3] = \pi_d\circ\pi_{-27d}$, we have an exact sequence
  \[0 \to \Sel(\pi_d) \to \Sel_3(P_d) \to \Sel(\pi_{-27d}) \to \frac{\Sha(P_{-27d})[\pi_{-27d}]}{\pi_d(\Sha(P_d)[3])} \to 0.\]
  Exactness on the left is because $P_{-27d}[\pi_{-27d}](\Q) = 0$. By \Cref{lem:selmer-dual-twist}, there is an isomorphism $\Sel(\pi_{-27d})\simeq\Sel(\widehat\pi_d)$, so we see that 
  \begin{equation}\label{eq:paritySha}
m\equiv \dim_{\F_3}\Sel_3(P_d) + \dim_{\F_3}\frac{\Sha(P_{-27d})[\pi_{-27d}]}{\pi_d(\Sha(P_d)[3])}\pmod 2.
  \end{equation}
  Let
  \[\langle\cdot,\cdot\rangle\:\Sha(P_{-27d})\times\Sha(\widehat P_{-27d})\to\Q/\Z\]
  be the Cassels--Tate pairing. Using the polarization $\lambda_{-27d}\:P_{-27d}\to\widehat P_{-27d}$, define
   \[\langle\cdot,\cdot\rangle_\lambda\:\Sha(P_{-27d})\times\Sha(P_{-27d})\to\Q/\Z\]
  by $\langle x,y\rangle_\lambda = \langle x, \lambda_{-27d}(y)\rangle$. As in \cite{shnidmanRM}*{Thms.\ 4.3, 4.4}, if $x\in\Sha(P_{-27d})[\pi_{-27d}]$, then $y$ is in the image of $\pi_d\:\Sha(P_d)\to \Sha(P_{-27d})$ if and only if $\langle x, \lambda_{-27d}(y)\rangle_{\lambda} = 0$ for all $y\in \Sha(P_{-27d})[\pi_{-27d}]$. Thus, the Cassels--Tate pairing $\langle\cdot,\cdot\rangle_\lambda$ restricts to a non-degenerate paring on the finite group $\frac{\Sha(P_{-27d})[\pi_{-27d}]}{\pi(\Sha(P_d)[3])}$. Moreover, since both $P_{d}$ and $P_{-27d}$ are prime-to-3 polarized, this pairing is anti-symmetric, and therefore alternating. The non-degeneracy implies that it has even $\F_3$-rank. Combining with (\ref{eq:paritySha}), we deduce the desired congruence modulo 2. 
\end{proof}

The following general lemma will be used to compute local Selmer ratios below.
\begin{lemma}\label{lem:3adicduality}
Let $\alpha \colon A \to B$ be an isogeny of abelian varieties over a non-archimedean characteristic $0$ local field $F$. Then $c_\ell(\alpha)c_\ell(\widehat\alpha)= \#(\O_F/\deg(\alpha)\O_F).$
\end{lemma}
\begin{proof}
By \cite[B.1]{Cesnavicius}, the groups $B(F)/\alpha A(F)$ and $\widehat{A}(F)/\widehat{\alpha}\widehat{B}(F)$ are orthogonal complements under Tate-Shatz local duality
\[H^1(F, A[\alpha]) \times H^1(F, \widehat{B}[\widehat{\alpha}]) \longrightarrow \Q/\Z.\]
Thus
\[c_\ell(\alpha)c_\ell(\widehat\alpha) = \dfrac{\#B(F)/\alpha A(F)}{\#A(F)[\alpha]} \cdot\dfrac{\#\widehat{A}(F)/\widehat{\alpha}\widehat{B}(F)}{\#\widehat{B}(F)[\widehat{\alpha}]} = \dfrac{\#H^1(F, A[\alpha])}{\#A(F)[\alpha] \cdot\#\widehat{B}(F)[\widehat{\alpha}] } = \#(\O_F/\deg(\alpha)\O_F),\]
where the final equality follows from the Euler-Poincar\'e characteristic formula.
\end{proof}

\subsection{The example} Now specialize to the context of \Cref{thm:prymexample} and the specific curves $C_d\:y^3 = (x^2-d)(x^2-4d)$. 

The isogeny $\pi \colon P \to P_{-27}$ factors as
\[P\xrightarrow{\phi}B\xrightarrow{\psi}P_{-27},\]
where $B = P/\langle (1,0) - (-1,0)\rangle$. Since $(1,0)-(-1,0)\in P[\pi]$, we obtain twists $\phi_d\:P_d\to B_d$ and $\psi_d\:B_d\to P_{-27d}$ by \Cref{lem:zeta-linear}.  

Let us compute the local Selmer ratios for $\phi_d$ and $\psi_d$, for all $d \in \Sigma$ (where $\Sigma$ is as in \Cref{thm:prymexample}). For any $d \in \Q^\times$, we have $c_\infty(\phi_d) = c_\infty(\psi_d)$, since the kernels of $\phi$ and $\psi$ are both $\Z/3\Z$.  Note that $P$ has good reduction at all $p > 3$, since $C$ does. Thus, for $d \in \Sigma$ and for all $p\nmid 6\infty$, by \Cref{thm:tamagawa-ratios}, we have $c_p(\phi_d) = 1 = c_p(\psi_d)$. If $p = 2$, then since $d\equiv 2, 3\pmod 4$, neither $d$ nor $-3d$ is a square in $\Q_2$, so by \Cref{norms-dimension-table}, $c_2(\phi_d) = 1 = c_2(\psi_d)$ as well.  To compute the ratios $c_3(\phi_d)$ and $c_3(\psi_d)$, we use \Cref{lem:3adicduality}.  By multiplicativity, we have
\[c_3(\phi_d)c_3(\psi_d)c_3(\phi_{-27d})c_3(\psi_{-27d}) = c_3([3]) = 9.\] 
Since $d, -27d \notin \Q_3^{\times2}$, all four of these local Selmer ratios are integers, and the same is true for the dual isogenies. Thus, by \Cref{lem:3adicduality}, each ratio must be either $1$ or $3$. Hence, of the four Selmer ratios $c_3(\phi_d), c_3(\psi_d), c_3(\phi_{-27d}), c_3(\psi_{-27d})$, exactly two are 3 and the other two are 1.

\begin{lemma}\label{lem:densities}
If $d \in \Sigma$, then $c_3(\phi_d) \neq c_3(\psi_d)$.
\end{lemma}
\begin{proof}
By \Cref{prop:parity}, the parity of $\dim_{\F_3}\Sel_3(P_d)$ is odd if and only if $c(\pi_d) = c(\phi_d) c(\psi_d)$ is an odd power of $3$. Hence, by our local computations at all other primes $p \neq 3$ given above, the parity of $\dim_{\F_3}\Sel_3(P_d)$ is odd if and only if $c_3(\phi_d) \neq c_3(\psi_d)$. Since $\Jac(C_d)$ is prime-to-3-isogenous to $P_d \times E_d$, we have $\Sel_3(J_d) = \Sel_3(P_d) \oplus \Sel_3(E_d)$. Also, letting $K = \Q(\zeta_3)$ and $X \in \{J_d,P_d,E_d\}$, we have
\[\dim \Sel_3(X) \equiv \dim \Sel_{\pi}(X) + \dim \Sel_{\pi_{-27}}(X_{-27}) = \dim \Sel_\pi(X_K) \pmod 2,\] 
where $\pi$ is the map induced by $1 -\zeta$ on divisors, and $X_K$ is the base change to $K$. It follows that $c(\phi_d) \neq c(\psi_d)$ if and only if $\dim \Sel_\pi(J_{d,K}) - \dim \Sel_\pi(E_{d,K})$ is odd. The latter two $\pi$-Selmer groups can be computed 
in Magma \cite{magma} for any choice of $d$, using the command \texttt{PhiSelmerGroup}. In fact, it is enough to take $d = 2$, since the isomorphism class of $A_d$ over $\Q_3$ depends only on the the image of $d$ in $\Q_3^\times/\Q_3^{\times 6}$, and all elements of $\Sigma$ map to the sixth-power class of $d =2$.\footnote{To check this, use the fact that $\Z_3^{\times6} =1 + 9\Z_3.$} For $d = 2$, we find that $\dim \Sel_\pi(J_{d,K}) - \dim \Sel_\pi(E_{d,K}) = 1$.
\end{proof}

 To recap, for $d \in \Sigma$, we have $c(\phi_d) = c_3(\phi_d)c_\infty(\phi_d)$ and $c(\psi_d) = c_3(\psi_d)c_\infty(\psi_d)$, and we know $c_\infty(\phi_d) = c_\infty(\psi_d)$ (which is equal to 1 or 1/3, depending on the sign of $d$) and $\{c_3(\phi_d), c_3(\psi_d)\} \subset \{1,3\}$.  Combining this with \Cref{lem:densities}, we conclude that  exactly one of $c(\phi_d)$ and $c(\psi_d)$ is 1 and the other is $3^{\pm1}$.  
 
 \begin{proof}[Proof of Theorem $\ref{thm:prymexample}$]
     For all $d$, we have 
\[\rk P_d(\Q) \leq \rk(\Sel(\pi_d) \+\Sel(\widehat\pi_d))\leq \rk(\Sel(\phi_d) \+ \Sel(\widehat \phi_{d})) + \rk(\Sel(\psi_d) \+ \rk \Sel(\widehat\psi_{d})).\] 
For $d \in \Sigma$, we have seen that exactly one of $c(\phi_d)$ and $c(\psi_d)$ is 1 and the other is $3^{\pm 1}$. Thus,  the average rank of $P_d(\Q)$ for $d \in \Sigma$ is at most $(1 + \frac43) = \frac73$, by \Cref{prop:tk-rank-bounds}. This proves the first claim of \Cref{thm:prymexample}.

 Next we show that $\dim_{\F_3} \Sel_3(P_d) = 1$ for at least $\frac13$ of $d \in \Sigma$.  
 Without loss of generality we may assume that $c(\phi_d) = 1$ and $c(\psi_d) = 3^{\pm 1}$.  By \Cref{prop:tk-rank-bounds}$(iii)$, for at least $\frac12$ of $d \in \Sigma$, we have $\Sel(\phi_d) = 0 = \Sel(\widehat\phi_d)$, and for at least $\frac56$ of $d\in \Sigma$, we have $\dim_{\F_3} \Sel(\psi_d) \oplus \Sel(\widehat\psi_{d}) = 1$.  Thus, for at least $\frac56 - \frac12 = \frac13$ of $d \in \Sigma$, we have 
\[\dim_{\F_3} \Sel_3(P_d) \leq \dim \Sel(\phi_d) + \dim \Sel( \widehat\phi_{d}) + \dim \Sel(\psi_d) + \dim \Sel(\widehat \psi_{d}) \leq 1.\]
This implies that $\rk P_d(\Q) \leq 1$ for at least $\frac13$ of $d \in \Sigma$.
\end{proof}

\subsection{More general curves}

It is plausible that for {\it every} Prym $P$ associated to some curve $C_{a,b} \colon y^3 = (x^2 -a)(x^2 - b)$, our method shows that $\rk P_d \leq 1$ for a positive proportion of $d$.  This holds if one can check a certain $3$-adic condition on the numbers $c_3(\phi_d)$ and $c_3(\psi_d)$, exactly as in the proof of \Cref{lem:densities}. This condition is satisfied in all examples we checked, but we do not have a proof in  general. Since the local Selmer ratios $c_3(\phi_{a,b,d})$ and $c_3(\psi_{a,b,d})$ are locally constant as functions on $\Q_3^3 = \{(a,b,d)\}$, we can at least say that this condition holds for a large class of bielliptic Picard curves $C_{a,b}$, with $a$ and $b$ satisfying certain congruence conditions modulo a power of $3$.

In \cite{SW-rank-gain}, we prove that a positive proportion of $P_d$ have rank at most 1, in the case where $a/b$ is a square, using an extra argument which avoids the local $3$-adic computation.  In general, we have the following result, whose proof is an easier version of the argument given above, so we omit it.

\begin{theorem}
Fix $a \in \Q \setminus \{0,\pm 1\}$. For $d\in \Q\t/\Q^{\times 6}$, let $P_{a,d}$ be the Prym surface for the genus three curve $y^3 = (x^2 - d)(x^2 - ad)$. Then $\rk P_{a,d}(\Q) \leq 2$ for a positive proportion of $d$.
\end{theorem}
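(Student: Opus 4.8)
The plan is to run a weaker version of the argument behind \Cref{thm:prymexample}. Fix $a\in\Q\setminus\{0,\pm1\}$ and let $P=\mathrm{Prym}_{C/E}$ for $C\colon y^3=(x^2-1)(x^2-a)$ and $E\colon y^3=(x-1)(x-a)$, so that $P$ carries a $\zeta_3$-multiplication and a polarization $\lambda\colon P\to\widehat P$ of degree prime to $3$, and $P_{a,d}=P_d$ for $d\in\Q^\times/\Q^{\times6}$. As in the lemma preceding \Cref{thm:prymexample}, $P[\pi](\overline\Q)\simeq(\Z/3\Z)^2$ is spanned by the $\Q$-rational class $Q_1=(1,0)-(-1,0)$ and by $Q_2=(\sqrt a,0)-(-\sqrt a,0)$, on which $G_\Q$ acts through the quadratic character $\chi_a$ of $\Q(\sqrt a)$. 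Hence $\pi\colon P\to P_{-27}$ factors over $\Q$ as $P\xrightarrow{\phi}B\xrightarrow{\psi}P_{-27}$ with $B=P/\langle Q_1\rangle$, where $\phi$ and $\psi$ are $\zeta$-linear $3$-isogenies (\Cref{lem:zeta-linear}) with $P[\phi]\simeq\Z/3\Z$ and $B[\psi]\simeq\F_3\otimes\chi_a$, and for every $d$ we obtain twists $\phi_d\colon P_d\to B_d$ and $\psi_d\colon B_d\to P_{-27d}$.

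First I would reduce to Selmer groups, exactly as in \Cref{thm:prymexample}: \Cref{lem:trigonal-rosati}, applied to the relevant Jacobian, shows as in \Cref{lem:selmer-dual-twist} that $\Sel(\pi_{-27d})\simeq\Sel(\widehat\pi_d)$. Since multiplication by $3$ on $P_d$ factors, up to a unit automorphism, as $\pi_d$ followed by the analogous isogeny $P_{-27d}\to P_d$, while $\pi_d=\psi_d\circ\phi_d$ and $\widehat\pi_d=\widehat\phi_d\circ\widehat\psi_d$, two applications of the exact sequence \eqref{eq: selmer-ses} give
\[\rk P_{a,d}(\Q)\le\dim_{\F_3}\Sel_3(P_d)\le\dim_{\F_3}\bigl(\Sel(\phi_d)\oplus\Sel(\widehat\phi_d)\bigr)+\dim_{\F_3}\bigl(\Sel(\psi_d)\oplus\Sel(\widehat\psi_d)\bigr)\]
for every $d$. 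Next I would pin down the global Selmer ratios on a positive-density family. Let $\Sigma$ be the set of squarefree integers $d$ of a fixed sign lying in a prescribed class in $\Q_p^\times/\Q_p^{\times6}$ at each prime $p\mid6a(a-1)$, the classes chosen so that $d$ and $-3d$ (and, when the square class of $a$ permits, also $ad$ and $-3ad$) are non-squares in each such $\Q_p$. For $p\nmid6a(a-1)$ the variety $P$ has good reduction and, since $d$ is squarefree so that $v_p(d)\in\{0,1\}$, \Cref{thm:tamagawa-ratios} gives $c_p(\phi_d)=c_p(\psi_d)=1$; at the bad primes $p\ne3$ the non-square conditions together with \Cref{norms-dimension-table} force the relevant local cohomology to vanish, again giving $c_p(\phi_d)=c_p(\psi_d)=1$; and $c_\infty(\phi_d)=c_\infty(\psi_d)\in\{1,\tfrac13\}$ is constant on $\Sigma$ because $\phi$ and $\psi$ have kernel of order $3$. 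At $p=3$, the non-square conditions make $P_d[\phi_d](\Q_3)$, $\widehat B_d[\widehat\phi_d](\Q_3)$, $B_d[\psi_d](\Q_3)$ and $\widehat P_{-27d}[\widehat\psi_d](\Q_3)$ all vanish, so $c_3(\phi_d)$, $c_3(\widehat\phi_d)$, $c_3(\psi_d)$, $c_3(\widehat\psi_d)$ are positive powers of $3$; combined with \Cref{lem:3adicduality}, which forces $c_3(\phi_d)c_3(\widehat\phi_d)=c_3(\psi_d)c_3(\widehat\psi_d)=3$, this yields $c_3(\phi_d),c_3(\psi_d)\in\{1,3\}$. In total $c(\phi_d)=3^{j}$ and $c(\psi_d)=3^{k}$ with $j,k\in\{-1,0,1\}$ and both constant on $\Sigma$.

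Finally I would conclude by averaging. By \Cref{prop:tk-rank-bounds}$(ii)$ applied to $\phi$ and to $\psi$, each of $\avg_\Sigma\dim_{\F_3}\bigl(\Sel(\phi_d)\oplus\Sel(\widehat\phi_d)\bigr)$ and $\avg_\Sigma\dim_{\F_3}\bigl(\Sel(\psi_d)\oplus\Sel(\widehat\psi_d)\bigr)$ is at most $\tfrac43$, so $\avg_\Sigma\rk P_{a,d}(\Q)\le\tfrac83$. Since $\rk P_{a,d}(\Q)$ is a non-negative integer, the proportion of $d\in\Sigma$ with $\rk P_{a,d}(\Q)\ge3$ is at most $\tfrac13\cdot\tfrac83=\tfrac89$, so $\rk P_{a,d}(\Q)\le2$ for at least $\tfrac19$ of $d\in\Sigma$, hence for a positive proportion of all $d$.

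The step I expect to be the main obstacle is the local analysis at $3$: one needs the square class of $d$ at $3$ to be chosen so that all four torsion groups above vanish, which is possible exactly when the square class of $a$ in $\Q_3^\times/\Q_3^{\times2}$ is favorable, and for the remaining values of $a$ one must replace the clean vanishing argument by a direct evaluation of $c_3(\phi_d)$ and $c_3(\psi_d)$ in the spirit of the proof of \Cref{lem:densities}. Crucially, though, the bound $\rk\le2$ requires only the coarse information $c_3(\phi_d),c_3(\psi_d)\in\{1,3\}$ and not the finer comparison $c_3(\phi_d)\ne c_3(\psi_d)$ needed for the rank $\le1$ statements, which is why the whole argument is lighter than that of \Cref{thm:prymexample}; one also needs to check that the bad primes $p\mid a(a-1)$ can be controlled simultaneously by congruence conditions, which is routine in any given case and does not affect the positive-density conclusion.
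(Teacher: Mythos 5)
Your global skeleton (reduce to the four Selmer groups of $\phi_d,\widehat\phi_d,\psi_d,\widehat\psi_d$ via the factorization $\pi_d=\psi_d\circ\phi_d$ and the duality $\Sel(\pi_{-27d})\simeq\Sel(\widehat\pi_d)$, then average over a congruence family of squarefree $d$) is exactly the strategy of \Cref{thm:prymexample}, which is what the paper has in mind when it says the proof is "an easier version" of that argument; the paper itself gives no further details. The problem is in your local analysis, and it is a genuine gap. You need $c(\phi_d)=3^{j}$ and $c(\psi_d)=3^{k}$ with $|j|+3^{-|j|}+|k|+3^{-|k|}<3$ (your Markov step), and you get $j,k\in\{-1,0,1\}$ only by forcing $d,-3d$ \emph{and} $ad,-3ad$ to be non-squares in $\Q_p$ at every bad place. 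For an odd prime $p$ the group $\Q_p^\times/\Q_p^{\times2}$ has order $4$, so the four excluded classes $\{1,-3,a^{-1},-3a^{-1}\}$ exhaust it precisely when $a\notin\Q_p^{\times2}\cup(-3)\Q_p^{\times2}$; for such $a$ and $p$ (e.g.\ $a=5$, $p=5$, or $a$ in a bad square class at $p=3$) no choice of $d$ makes both cohomology groups vanish. This is no accident of your bookkeeping: the paper's worked example has $a=4$ a square, which is exactly why the characters $\chi_{ad},\chi_{-3ad}$ collapse onto $\chi_d,\chi_{-3d}$ and the clean vanishing argument closes there.

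At a place where the vanishing cannot be arranged, $c_p(\psi_d)$ (say) is governed by Tamagawa-type data of the bad-reduction twists: \Cref{thm:tamagawa-ratios} does not apply (it assumes good reduction), and \Cref{norms-dimension-table} plus \Cref{lem:3adicduality} only pin it down to $\{3^{-1},1,3\}$ at each such prime (and possibly $3^{\pm2}$ at $3$). Since $a(a-1)$ can have several such primes, $|k|$ is not bounded by $1$, your claimed average $\le 8/3$ fails, and even the alternative route via \Cref{prop:tk-rank-bounds}$(iii)$ only yields $\rk\le|j|+|k|$ on a positive proportion, which need not be $\le2$. You acknowledge the obstruction at $p=3$ but propose to resolve it "in the spirit of \Cref{lem:densities}" --- that lemma rests on a Magma computation for one specific curve, and the paper explicitly says it cannot prove the analogous $3$-adic statement for general $a$; moreover you assert the vanishing at the primes $p\mid a(a-1)$ without noticing that the same obstruction occurs there. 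So as written the proposal proves the theorem only for those $a$ whose square class at every bad place lies in $\Q_p^{\times2}\cup(-3)\Q_p^{\times2}$; to cover all $a$ you need an additional idea (some a priori constraint on the product of the uncontrolled local ratios, or a different choice of $3$-isogeny chain at the bad places), not just a computation.
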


\section*{Acknowledgments}
We are grateful to Nils Bruin, Jef Laga, Max Lieblich, Dino Lorenzini, Beth Malmskog, Zach Scherr, and Michael Stoll for helpful conversations and correspondences. We thank the referees for their helpful and detailed comments and corrections, and for suggesting to us cleaner proofs of Lemmas \ref{lem:table-h10cd}, \ref{lem:torsion}, and \ref{lem:3adicduality}. The first author was supported by the Israel Science Foundation (grant No.\ 2301/20). The second author was supported by an Emily Erskine Endowment Fund postdoctoral fellowship at the Hebrew University of Jerusalem, by the Israel Science Foundation (grant No. 1963/20), and by the US-Israel Binational Science Foundation (grant No. 2018250).

\bibliography{bibliography}
\bibliographystyle{alpha} 
\end{document}